\newtheorem{theorem}{Theorem}[section]
\newtheorem{lemma}[theorem]{Lemma}
\newtheorem{proposition}[theorem]{Proposition}
\newtheorem{corollary}[theorem]{Corollary}
\newtheorem{conjecture}[theorem]{Conjecture}
\theoremstyle{definition}
\theoremstyle{remark}
\newtheorem*{remark*}{Remark}
\renewcommand{\leq}{\leqslant}
\renewcommand{\geq}{\geqslant}
\newcommand\R{\mathbf{R}}
\newcommand\Z{\mathbf{Z}}
\newcommand\ZZ{\Z}
\newcommand\F{\mathbf{F}}
\renewcommand\P{\mathbf{P}}
\newcommand\KL{{\operatorname{KL}}}
\newcommand\N{\mathbf{N}}
\newcommand\ent{{\operatorname{ent}}}
\renewcommand\H{\mathbf{H}}
\newcommand\h{\mathrm{h}}
\newcommand\eps{\varepsilon}
\newcommand\PFR[0]{\operatorname{PFR}}
\newcommand\CC{{C_1}}
\numberwithin{equation}{section}
\begin{document}

\title[Sumsets and entropy revisited]{Sumsets and entropy revisited}
\author{Ben Green}
\thanks{BG is supported by Simons Investigator Award 376201.}
\address{Mathematical Institute \\ Andrew Wiles Building \\ Radcliffe Observatory Quarter \\ Woodstock Rd \\ Oxford OX2 6QW \\ UK}
\email{ben.green@maths.ox.ac.uk}

\author{Freddie Manners}
\thanks{FM is supported by a Sloan Fellowship.  During part of the preparation of this work, he was also supported by a von Neumann Fellowship from the Institute for Advanced Study.}
\address{Department of Mathematics \\ University of California, San Diego (UCSD)\\
9500 Gilman Drive \# 0112 \\ La Jolla, CA  92093-0112 \\ USA} 
\email{fmanners@ucsd.edu}

\author{Terence Tao}
\thanks{TT is supported by NSF grant DMS-1764034 and by a Simons Investigator Award.  }
\address{Math Sciences Building \\ 520 Portola Plaza \\ Box 951555 \\ Los Angeles, CA 90095 \\ USA}
\email{tao@math.ucla.edu}



\subjclass[2000]{Primary 11B30, 94A17}

\begin{abstract}
The entropic doubling $\sigma_{\ent}[X]$ of a random variable $X$ taking values in an abelian group $G$ is a variant of the notion of the doubling constant $\sigma[A]$ of a finite subset $A$ of $G$, but it enjoys somewhat better properties; for instance, it contracts upon applying a homomorphism.

In this paper we develop further the theory of entropic doubling and give various applications, including:
\begin{enumerate}
\item A new proof of a result of P\'alv\"olgyi and Zhelezov on the ``skew dimension'' of subsets of $\Z^D$ with small doubling;
\item A new proof, and an improvement, of a result of the second author on the dimension of subsets of $\Z^D$ with small doubling;
\item A proof that the Polynomial Freiman--Ruzsa conjecture over $\F_2$ implies the (weak) Polynomial Freiman--Ruzsa conjecture over $\Z$.
\end{enumerate}
\end{abstract}
\maketitle

\tableofcontents

\section{Introduction and statement of results}

\emph{Notation.} Throughout the paper we use standard asymptotic notation. The notations $X = O(Y)$, $X \ll Y$, or $Y \gg X$ all denote the bound $|X| \leq CY$ for an absolute constant $C$. Different instances of the notation may imply different constants $C$.

\subsection{Entropy doubling and Ruzsa distance}

Let $G = (G,+)$ be an abelian group. In this paper, by a ``$G$-valued random variable'' we mean a random variable $X$ taking values in a finite subset of $G$.  Given such a variable, the \emph{entropic doubling constant} (first introduced in~\cite{tao-entropy}) $\sigma_\ent[X]$ is defined by the formula
\[ \sigma_{\ent}[X] \coloneqq  \exp \bigl(\H(X_1 + X_2) - \H(X)\bigr),\] 
where $X_1, X_2$ are independent copies of $X$. Here, $\H(X)$ denotes the Shannon entropy of $X$, the definition and basic properties of which we review in Appendix~\ref{basic-entropy-facts}. 

If $A \subseteq G$ is a finite non-empty set, by abuse of notation we write $\sigma_{\ent}[A] = \sigma_{\ent}[U_A]$, where $U_A$ is a uniform random variable drawn from $A$.  For instance, if $H$ is a finite subgroup of $G$, one can check that $\sigma_\ent[H]=1$.  

The entropic doubling constant is related to other standard measures of additive structure via the inequalities
\begin{equation}%
  \label{structure-ineqs}
  \frac{|A|^3}{\mathrm{E}[A]} \leq \sigma_{\ent}[A] \leq \sigma[A].
\end{equation}
Here, $\sigma[A] \coloneqq \frac{|A+A|}{|A|}$ is the doubling constant of $A$ and \[ \mathrm{E}[A] \coloneqq |\{ (a_1, a_2, a_3, a_4) \in A^4 : a_1 + a_2 = a_3 + a_4\}| \] is the additive energy of $A$.

The second inequality in~\eqref{structure-ineqs} was noted in~\cite[Equation 10]{tao-entropy}, and we recall the proof in Appendix~\ref{app:energy-entropy}. The first inequality seems not to have appeared explicitly in the literature, but it follows easily either by a direct argument using the weighted AM--GM inequality, or by quoting the monotonicity of R\'enyi entropy. For completeness, we give this argument in Appendix~\ref{app:energy-entropy}. Both inequalities can be far from tight: we give an example in Appendix~\ref{app:energy-entropy}.

\subsubsection*{Entropic Ruzsa distance}

If $X, Y$ are $G$-valued random variables (not necessarily independent, or even defined on the same sample space), then we define the \emph{entropic Ruzsa distance} $d_\ent(X,Y)$ between these variables by the formula
\begin{equation}\label{dent-def} d_{\ent}(X,Y) \coloneqq  \H(X' - Y') - \frac{1}{2} \H(X') - \frac{1}{2} \H(Y'),\end{equation} where $X', Y'$ are independent copies of $X, Y$ respectively. 
This concept, introduced by Ruzsa~\cite{ruzsa-entropy} and studied in more detail by the third author~\cite{tao-entropy},  generalizes entropic doubling, since $\sigma_{\ent}[X] = e^{d_{\ent}(X, -X)}$.  

It is easy to see that $d_{\ent}(X, Y) = d_\ent(Y,X) \geq 0$, and also that $d_\ent(U_H,U_H)=0$ for any finite subgroup $H$ of $G$. Note that $d_\ent(X,Y)$ depends only on the distributions 
\[p_X(x) \coloneqq \P(X=x); \quad p_Y(y) \coloneqq \P(Y=y)\]
of $X,Y$. We have (see the final paragraph of~\cite{ruzsa-entropy},~\cite[Theorem 1.10]{tao-entropy}, or Lemma~\ref{improv-ruzsa} below) the \emph{entropic Ruzsa triangle inequality}
\begin{equation}\label{triangle} d_{\ent}(X,Z) \leq d_{\ent}(X, Y) + d_{\ent}(Y, Z)\end{equation} 
for any three $G$-valued random variables $X, Y, Z$.  

\begin{remark*}
It is somewhat traditional to use the letter $K$ for the combinatorial doubling constant $\sigma[A]$. We will generally use the letter $k$ for distances $d_{\ent}(X, Y)$. Where these arise from sets (for instance if $X = Y = U_A$) one should informally think of $k$ being on the order of $\log K$. It should be carefully noted that $k$ may take values in $[0, \infty)$ and is not constrained to be an integer. 
\end{remark*}

It will be technically convenient to introduce a small modification of the entropic Ruzsa distance.  Define the \emph{maximal entropic Ruzsa distance} $d_\ent^*(X,Y)$ to be the quantity
\begin{equation}\label{max-dist}
  d^*_{\ent}(X,Y) \coloneqq \sup_{X',Y'} \left( \H(X' - Y') - \frac{1}{2} \H(X') - \frac{1}{2} \H(Y') \right)
\end{equation}
where $X',Y'$ range over all pairs of random variables with marginal distributions $p_X$, $p_Y$ respectively (i.e., all couplings of $X$ and $Y$). In particular, $X', Y'$ are \emph{not} required to be independent.  

We have the following observations.
\begin{lemma}\label{improv-ruzsa} Let $X,Y,Z$ be $G$-valued random variables. Then:
  \begin{enumerate}[label={\textup{(\roman*)}}]
    \item We have $d^*_\ent(X,Z) \leq d_\ent(X,Y) + d_\ent(Y,Z)$.
    \item We have $d_\ent(X,Y) \leq d^*_\ent(X,Y) \leq 3 d_\ent(X,Y)$.
  \end{enumerate}
\end{lemma}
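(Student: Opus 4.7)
The plan is to prove (i) by generalizing the proof of the classical entropic Ruzsa triangle inequality to a setting where $X$ and $Z$ are arbitrarily coupled on the left-hand side, and then to derive (ii) by feeding (i) back into itself with $Y$ serving as its own intermediate.

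For (i), I would fix an arbitrary coupling $(X', Z')$ of $X$ and $Z$ and introduce a copy $Y'$ of $Y$ that is independent of the pair $(X', Z')$. Since $X' - Z' = (X' - Y') + (Y' - Z')$, subadditivity combined with the chain rule gives
\[ \H(X'-Y') + \H(Y'-Z') \;\geq\; \H(X'-Y',\, Y'-Z') \;=\; \H(X'-Z') + \H(X'-Y' \mid X'-Z'). \]
The crux is the lower bound $\H(X'-Y' \mid X'-Z') \geq \H(Y')$. To obtain it, I would condition on $X'-Z' = t$: because $Y'$ is independent of $(X',Z')$, it remains independent of $Z'$ in the conditional law and retains its original distribution, while $X' = Z' + t$, so
\[ \H(X' - Y' \mid X' - Z' = t) = \H(Z' - Y' \mid X'-Z'=t) \geq \H(Y'), \]
using the standard inequality $\H(A-B) \geq \H(B)$ for independent $A,B$. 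Averaging over $t$ and observing that $Y'$ is marginally independent of $X'$ and of $Z'$ (so that $\H(X'-Y') = \H(X-Y)$ and $\H(Y'-Z') = \H(Y-Z)$ in the sense of \eqref{dent-def}) yields $\H(X'-Z') + \H(Y) \leq \H(X-Y) + \H(Y-Z)$; subtracting $\tfrac{1}{2}\H(X) + \tfrac{1}{2}\H(Z) + \H(Y)$ and taking the supremum over couplings $(X', Z')$ gives (i).

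For (ii), the inequality $d_{\ent}(X,Y) \leq d^*_{\ent}(X,Y)$ is immediate from the definition by choosing the independent coupling. For the reverse, I would apply (i) in the form $d^*_{\ent}(X, Y) \leq d_{\ent}(X, Y) + d_{\ent}(Y, Y)$ (taking both the middle and last variable to be $Y$), and then invoke the classical triangle inequality \eqref{triangle}—which is nothing but the independent-coupling specialization of (i)—to obtain $d_{\ent}(Y, Y) \leq d_{\ent}(Y, X) + d_{\ent}(X, Y) = 2d_{\ent}(X, Y)$, yielding $d^*_{\ent}(X, Y) \leq 3d_{\ent}(X, Y)$.

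The main obstacle is the conditional entropy step $\H(X'-Y' \mid X'-Z') \geq \H(Y')$: a naive attempt to run the standard Ruzsa argument would require $X'$ and $Z'$ themselves to be independent, which is precisely the hypothesis being relaxed. What rescues the proof is that $Y'$ is injected as independent of the \emph{entire} coupling, so that within each fiber of $X' - Z'$ the pair $(Z', Y')$ remains independent even when $X'$ and $Z'$ are not.
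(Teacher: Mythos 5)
Your proof is correct and is essentially the same argument as the paper's, just packaged slightly differently in part (i): the paper establishes $\H(X-Z) + \H(Y) \leq \H(X-Y) + \H(Y-Z)$ (for $Y$ independent of the coupled pair $(X,Z)$) by applying submodularity~\eqref{submodularity} with $A = X-Y$, $B = Z$, $C = X-Z$, whereas you reach the same inequality by subadditivity and the chain rule, proving the key conditional bound $\H(X-Y \mid X-Z) \geq \H(Y)$ by explicit conditioning on the value of $X-Z$ (which is precisely what the submodularity step encodes, since $\H(X-Y \mid X-Z) \geq \H(X-Y \mid X, Z) = \H(Y)$). Part (ii) is identical to the paper's deduction.
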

For the proof, see Section~\ref{improv-sec}.

\subsubsection*{Small Ruzsa distance}%
We turn now to our first main result, which gives a closer connection than~\eqref{structure-ineqs} between small entropy doubling (or more generally small Ruzsa distance) and small doubling.
Here, for a real parameter $p \in (0,1)$, we write $\h(p) \coloneqq  p \log \frac{1}{p} + (1 - p) \log \frac{1}{1-p}$ for the entropy of the Bernouilli random variable with probability $p$.

\begin{proposition}%
  \label{sec2-prop}
  Let $C \geq 4$ be a real parameter. For any $G$-valued random variables $X,Y$ there is a non-empty finite subset $S$ of $G$ such that, if $U_S$ is a uniform random variable on $S$, then 
  \begin{equation}%
    \label{approx-xs}
    d_{\ent}(U_S, Y) \leq (C + 2) d_\ent(X,Y) + \h\biggl(1 - \frac{2}{C}\biggr)
  \end{equation}
  and
  \begin{equation}%
    \label{s-small-doubling}
    \log \frac{ |S - S|}{|S|} \leq (2C + 4) d_\ent(X,Y)  + 2 \h\biggl(1 - \frac{2}{C}\biggr) .
  \end{equation}
\end{proposition}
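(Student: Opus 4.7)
My plan is to construct $S$ as a high-probability level set of the probability mass function $p_X$ of $X$:
\[ S \coloneqq \{x \in G : p_X(x) \geq \lambda\}, \]
choosing the threshold $\lambda$ so that $\P(X \in S) = 1 - 2/C$ (perturbing slightly to handle atoms if needed). The indicator $\mathbf{1}_{X \in S}$ then has binary entropy exactly $h(1 - 2/C)$, which accounts for the explicit additive term in both bounds, and we have $|S| \leq 1/\lambda$ since the probabilities on $S$ sum to at most $1$.

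For~\eqref{approx-xs}, I would first bound $d_\ent(U_S, X)$ in terms of $k = d_\ent(X,Y)$ and $h(1 - 2/C)$ via the entropy decomposition
\[ \H(X) = h(2/C) + (1 - 2/C)\H(X \mid X \in S) + (2/C)\H(X \mid X \notin S), \]
coupled with the observation that $p_X$ restricted to $S$ is pinched between $\lambda$ and its maximum, so the conditioned distribution $(X \mid X \in S)$ differs from the uniform distribution $U_S$ by a controlled multiplicative factor. The $d^*$-triangle inequality of Lemma~\ref{improv-ruzsa}(i) then gives
\[ d^*_\ent(U_S, Y) \leq d_\ent(U_S, X) + d_\ent(X, Y), \]
which yields~\eqref{approx-xs} once the bound $d_\ent(U_S, X) \leq (C+1)k + h(1 - 2/C)$ is in place.

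For~\eqref{s-small-doubling}, I would apply the triangle inequality twice to obtain
\[ d_\ent(U_S, -U_S) \leq d^*_\ent(U_S, Y) + d^*_\ent(Y, -U_S) \leq 2\, d^*_\ent(U_S, Y), \]
using that $d^*_\ent(Y, -U_S)$ has the same form as $d^*_\ent(U_S, Y)$ by the symmetry of the definition under negation. This controls the entropic doubling $\sigma_\ent[U_S]$. To pass from the entropic to the combinatorial doubling $\log(|S-S|/|S|)$, I would invoke the first inequality in~\eqref{structure-ineqs} to get a lower bound on the additive energy $E[S]$, then a Plünnecke--Ruzsa style step to upper-bound $|S-S|$; the factor-of-two discrepancy between the two displayed bounds is consistent with Plünnecke's quadratic loss.

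The main obstacle is establishing the bound $d_\ent(U_S, X) \leq (C+1)k + h(1 - 2/C)$. Since $S$ is built from $X$ alone, this quantity does not mechanically depend on $k$, so the linear-in-$C$ factor must arise from absorbing the ``spread'' $\log|S| - \H(X \mid X \in S)$ of $p_X$ on its level set into a multiple of $k$, presumably via an auxiliary use of the hypothesis $d_\ent(X, Y) \leq k$. Alternatively, one may need to construct $S$ using both $X$ and $Y$, for instance as a level set of the distribution of $X - Y'$ for an independent copy $Y'$ of $Y$; this would naturally inject $k$ into the relevant entropy estimates via $\H(X - Y') = k + \tfrac{1}{2}\H(X) + \tfrac{1}{2}\H(Y)$.
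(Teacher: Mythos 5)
Your proposal identifies the central obstacle correctly but the proposed fixes do not close it, and the paper's route is materially different.  First, the definition of $S$: building $S$ as a level set of $p_X$ alone gives no mechanism to charge the spread $\log|S| - \H(X \mid X\in S)$ to $k = d_\ent(X,Y)$ --- as you observe, $S$ then depends only on $X$ and the quantity $d_\ent(U_S,X)$ need not be small in terms of $k$.  The paper instead sets
\[
S := \bigl\{x : p_X(x) > 0,\ D_{\KL}(x - Y \Vert X - Y) \leq Ck\bigr\},
\]
exploiting the Gibbs-inequality identity $\H(X-Y)-\H(Y) = \sum_x p_X(x) D_{\KL}(x-Y\Vert X-Y)$ together with $\H(X-Y)-\H(Y)\leq 2k$; Markov's inequality then gives $\P(X\in S)\geq 1-2/C$, and the binary-entropy term enters through the conditioning on $1_{X\in S}$ exactly as you anticipate.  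Crucially this construction bakes $Y$ into the definition of $S$ and yields the uniform estimate $d^*_\ent(Z,Y)\leq Ck + \tfrac12(\H(Y)-\H(Z))$ for \emph{every} $S$-valued random variable $Z$, which is stronger than a bound on $d_\ent(U_S,Y)$ alone and is precisely what the second part needs.  Your alternative, a level set of $p_{X-Y'}$, produces a subset of the difference set, which is a set in the wrong role.

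Second, the passage to $\log(|S-S|/|S|)$.  Going through additive energy via~\eqref{structure-ineqs} and a Pl\"unnecke-type step cannot work as stated: the first inequality of~\eqref{structure-ineqs} only upper-bounds $|S|^3/\mathrm{E}[S]$, and large energy only yields (by Balog--Szemer\'edi--Gowers) a \emph{subset} of $S$ with small doubling, not a doubling bound for $S$ itself; you would then be forced to rerun the first estimate for this new set.  The paper sidesteps this entirely by using the fact that $d^*_\ent$ is a supremum over couplings: one chooses $(Z,Z')$ jointly distributed on $S\times S$ so that $Z-Z'$ is \emph{exactly} uniform on $S-S$, and then the bound $d^*_\ent(Z,Y)\leq Ck+\tfrac12(\H(Y)-\H(Z))$, applied to $Z$ and $Z'$ with the triangle inequality, gives $\log|S-S|=\H(Z-Z')\leq 2Ck+\H(Y)$ directly, with no combinatorial detour and no change of set.
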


We isolate two special cases of this proposition for future use:
\begin{enumerate}[label=(\roman*)]
  \item If we take $C=4$ in the above proposition, then (using, in addition, Lemma~\ref{improv-ruzsa} (ii)) we obtain the bounds $d_{\ent}(U_S, Y) \leq 6d_\ent(X,Y)  +  \log 2$ and $|S - S| \leq 4 e^{12 d_\ent(X,Y)} |S|$.
  \item If $d_\ent(X,Y) = \eps$ for some $0 < \eps \leq \frac{1}{16}$, then on taking $C \coloneqq \eps^{-1/2}$ we obtain the bounds $d_\ent(U_S,Y) \ll \eps^{1/2} \log \frac{1}{\eps}$ and $|S-S| \leq \bigl(1 + O( \eps^{1/2} \log \frac{1}{\eps} )\bigr) |S|$.
\end{enumerate}
We also remark that a qualitative version of this proposition (with unspecified dependence on $d_\ent(X,Y)$ on the right-hand side) was previously established in~\cite[Proposition 5.2]{tao-entropy}. 

In the regime where $d_\ent(X,Y)$ is small, we can in fact obtain the following more precise result.
\begin{proposition}\label{sec3-prop}
  There is absolute constant $\eps_0>0$ such that the following is true.
Let $X, Y$ be $G$-valued random variables, and suppose that $d_\ent(X,Y) \leq \eps_0$. Then there is some finite subgroup $H$ of $G$ such that $d_\ent(X, U_H), d_{\ent}(Y, U_H) \leq 12 d_\ent(X,Y)$.
\end{proposition}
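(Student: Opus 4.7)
The plan is to apply Proposition~\ref{sec2-prop} to extract a set $S \subseteq G$ of very small doubling close to $Y$ in entropic Ruzsa distance, then invoke a Freiman-type theorem to place $S$ inside a coset of a finite subgroup $H$, and finally verify, via the contractivity of $d_\ent$ under the quotient homomorphism, that this $H$ satisfies the required bounds.

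First, set $\eps := d_\ent(X, Y) \leq \eps_0$ and apply Proposition~\ref{sec2-prop} with $C := \eps^{-1/2}$, as in special case (ii) of the remark after its statement. This produces a finite nonempty $S \subseteq G$ with $d_\ent(U_S, Y) = O(\eps^{1/2}\log(1/\eps))$ and $|S-S|/|S| = 1 + O(\eps^{1/2}\log(1/\eps))$. For $\eps_0$ sufficiently small the doubling of $S$ is strictly less than $3/2$, so by a classical Freiman-type theorem for abelian groups $S$ is contained in a single coset $a + H$ of some finite subgroup $H \leq G$, with $|H|/|S| = 1 + O(\eps^{1/2}\log(1/\eps))$. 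Write $\pi_H \colon G \to G/H$ for the quotient homomorphism; crucially, $\pi_H(U_S)$ is deterministic.

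Second, I would bound $d_\ent(X, U_H)$ using the identity
\[
 d_\ent(Z, U_H) = \tfrac{1}{2}\H(\pi_H(Z)) + \tfrac{1}{2}\bigl(\log|H| - \H(Z \mid \pi_H(Z))\bigr),
\]
valid for any $G$-valued $Z$. The first summand is controlled by noting that $\pi_H(U_S)$ is constant, so $d_\ent(\pi_H(U_S), \pi_H(X)) = \tfrac12 \H(\pi_H(X))$; contractivity of $d_\ent$ under the homomorphism $\pi_H$ then bounds this by $d_\ent(U_S, X)$, and hence by the triangle inequality in terms of $d_\ent(U_S, Y)$ and $d_\ent(Y, X)$. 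The second summand is controlled by bounding how far $X$ deviates from being uniform on its coset of $H$, using closeness of $X$ to $U_S$ together with the small defect $\log|H| - \log|S|$ of $U_S$ itself. A symmetric argument controls $d_\ent(Y, U_H)$.

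The hard part will be bridging the gap between the $O(\eps^{1/2}\log(1/\eps))$ bounds that arise crudely from Proposition~\ref{sec2-prop} and the target linear bound $12\eps$. The essential extra leverage must be the \emph{exact} deterministic nature of $\pi_H(U_S)$, combined with the sharp contractivity bound $d_\ent(\pi_H(X), \pi_H(Y)) \leq d_\ent(X,Y) = \eps$ in the quotient $G/H$, which should upgrade the crude bounds to the desired linear ones. The explicit constant $12$ is presumably dictated by a small number of applications of the Ruzsa triangle inequality together with the factor of $2$ inherent to the definition of $d_\ent$.
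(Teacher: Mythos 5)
Your opening steps match the paper's: apply Proposition~\ref{sec2-prop} with $C=\eps^{-1/2}$ to get $S$ with $d_\ent(U_S,Y)\ll\eps^{1/2}\log(1/\eps)$ and $|S-S|<\tfrac32|S|$, then use Freiman's observation to produce $H=S-S$, and then obtain the weak bound $d_\ent(U_H,Y)\ll\eps^{1/2}\log(1/\eps)$. You also correctly identify the hard part: upgrading $O(\eps^{1/2}\log(1/\eps))$ to a linear $O(\eps)$ bound. But your proposed bridging mechanism --- ``the exact deterministic nature of $\pi_H(U_S)$ combined with the sharp contractivity $d_\ent(\pi_H(X),\pi_H(Y))\le\eps$'' --- is not sufficient, and this is where the proposal has a genuine gap.

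Contractivity tells you $d_\ent(\pi(X),\pi(X))\le\eps$, but by itself this does not bound $\H(\pi(X))$ or $\log|H|-\H(X)$ linearly in $\eps$: a random variable can have small entropic self-distance while still having nontrivial entropy (constant, even), and similarly a variable on $H$ with small self-distance need not be close to uniform in entropy. What actually closes the gap in the paper are two additional linear-in-$\eps$ lemmas which you have not supplied. Lemma~\ref{lem-11} says that if $\P(X=x_0)\ge 1-\delta_0$ then $\H(X)\le 2\,d_\ent(X,X)$; applied to $\pi(X)$, whose concentration is guaranteed only by the crude $\eps^{1/2}$ bound via~\eqref{px-lower}, this converts $d_\ent(\pi(X),\pi(X))\le\eps$ into a genuinely linear bound $\H(\pi(X))\le 2\eps$. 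Lemma~\ref{lem-12} says that if $X$ is $H$-valued with $\H(X)\ge\log|H|-\tfrac18$ then $\log|H|-\H(X)\le 2\,d_\ent(X,X)$; its proof goes through a nontrivial coupling result (Lemma~\ref{lem:lp}) producing $X_1,X_2$ with the given marginals and $X_1-X_2$ exactly uniform on $H$, and then invokes $d_\ent^*$. Moreover, to apply Lemma~\ref{lem-12} one must first pass to the conditioned variable $X_0=(X\mid\pi(X)=0)$ and control $d_\ent(X_0,X_0)$ and $|\H(X)-\H(X_0)|$ linearly; this uses the full strength of Proposition~\ref{projections-1}, namely the extra averaged term $\sum_{y_1,y_2}p(y_1)p(y_2)\,d_\ent(X_{y_1},X_{y_2})\le\eps$, not merely the projected-distance consequence $d_\ent(\pi(X),\pi(X))\le\eps$ that you emphasize. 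Finally, the paper first proves the symmetric case $X=Y$ (with constant $6$) and then derives the asymmetric case by a triangle-inequality argument plus a separate argument showing the subgroups obtained for $X$ and $Y$ coincide; your sketch does not address either of these points. In short, the architecture you describe is the right starting point, but the two lemmas supplying the linear bounds --- and the fibrewise information in Proposition~\ref{projections-1} needed to deploy them --- are the missing core of the argument.
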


\begin{remark*}
  The constant $\eps_0$ could be specified explicitly if desired, but we have not carried out such a calculation. The constant $12$ can be improved, but we will not attempt to optimise it here.
\end{remark*}

\subsubsection*{Behaviour under homomorphisms}
Given the close relation between notions of entropy doubling and Ruzsa distance and the usual combinatorial notions, one would be forgiven for wondering what the point of introducing the former is. 

The answer is that the entropy notions are more flexible and behave better in various ways. Most obviously, they are defined for arbitrary random variables $X$, with no requirement that $X$ be uniform on a set. Related to this is the fact that the entropy notions behave well under homomorphisms in a way that the combinatorial notions do not. 

The following is our main result in this direction. Here (and below) we use $(X|E)$ to denote a random variable $X$ conditioned to a positive probability event $E$. We adopt the convention that expressions such as $p_{Y_1}(y_1) p_{Y_2}(y_2) d_\ent\bigl((X_1 | Y_1 = y_1), (X_2 | Y_2 = y_2)\bigr)$ vanish if one of the events $Y_1=y_1$, $Y_2=y_2$ occurs with zero probability.

\begin{proposition}\label{projections-1}  Let $\pi \colon G \to H$ be a homomorphism, let $X_1,X_2$ be $G$-valued random variables, and set $Y_1 \coloneqq \pi(X_1)$ and $Y_2 \coloneqq \pi(X_2)$.
Then we have
\begin{align*}
  d_\ent(X_1, X_2) \geq & \; d_\ent (Y_1, Y_2) \\ & + \sum_{y_1, y_2 \in H}  p_{Y_1}(y_1) p_{Y_2}(y_2) d_\ent\bigl((X_1 | Y_1 = y_1), (X_2 | Y_2 = y_2)\bigr).
\end{align*}
In particular, we have
\[
  d_\ent(X_1, X_2) \geq d_\ent(Y_1, Y_2)
\]
and thus
\begin{equation}\label{doubling-compress}
 \sigma_\ent[\pi(X)] \leq \sigma_\ent[X]
 \end{equation}
for any $G$-valued random variable $X$.
\end{proposition}

\begin{remark*} The main inequality here is a precise version of the intuition that the doubling constant of a subset of $G$ in the presence of a homomorphism $\pi : G \rightarrow H$ should somehow be at least the doubling constant of the `base' times some combination of the doubling constants of the `fibres'. To make sense of this rigorously we need to pass from sets to general random variables, and replace combinatorial doubling by entropy doubling.

  An example of the failure of a similar result in the purely combinatorial setting (in fact of the analogue of~\eqref{doubling-compress}) is outlined in~\cite[Exercise 2.2.10]{tao-vu}.
\end{remark*}

\begin{remark*}
  Many previous works have noted the advantageous properties of entropy in somewhat related settings.
  To give a few examples, in rough chronological order there is the work of Avez~\cite{avez} and of Ka\u{\i}manovich and Vershik~\cite{kv} on random walks on discrete groups, the work of Hochman~\cite{hochman} on fractals, and the work of Breuillard--Varj\'u~\cite{bv} on Bernouilli convolutions.
\end{remark*}

\subsection{Structure of sets with small doubling.} 

We turn now to applications of the results of the previous subsection to inverse theorems for sets with small doubling.

\subsubsection*{Skew dimension}
The first application is a new proof of a result of P\'alv\"olgyi and Zhelezov~\cite{pz}, which they used to give a new and much shorter proof of a celebrated result of Bourgain and Chang~\cite{bourgain-chang}.
For the purposes of this result, an \emph{affine coordinate space} is a subset of $\Z^D$ (for some $D$) obtained by fixing the values of some possibly empty set of coordinates.
For instance, $\{(2,-1, x_3) : x_3 \in \Z\} \subseteq \Z^3$ is an affine coordinate space.
If $A$ is a finite subset of some affine coordinate space $V$, its\footnote{P\'alv\"olgyi and Zhelezov use the term \emph{query complexity} instead of skew dimension.} \emph{skew-dimension} $\dim_*(A)$ is defined inductively, as follows: 
\begin{itemize}
\item $\dim_*(A) = 0$ if and only if $A$ is a singleton (or empty).
\item If $r \geq 1$, then $\dim_*(A) \leq r$ if and only if there is a coordinate map $\pi \colon V \rightarrow \Z$ such that $\dim_*( \pi^{-1}(n) \cap A ) \leq r-1$ for all $n \in \Z$.
\end{itemize}

\begin{theorem}[P\'alv\"olgyi--Zhelezov]\label{pz-main}
Suppose $A$ is a finite subset of some affine coordinate space with $\sigma[A] \leq K$ for some $K \geq 2$.
Then there is $A' \subseteq A$ with $|A'| \geq K^{-O(1)} |A|$ and $\dim_* A' \ll \log K$.
\end{theorem}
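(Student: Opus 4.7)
The strategy is to prove an entropic strengthening of the theorem: for any $\Z^D$-valued random variable $X$ with $\sigma_\ent[X] \leq K$ there is an event $E$ with $\P(E) \geq K^{-O(1)}$ such that $\dim_*(\operatorname{supp}(X \mid E)) \ll \log K$. Specialising to $X = U_A$ and invoking $\sigma_\ent[A] \leq \sigma[A] \leq K$ from~\eqref{structure-ineqs} then yields the combinatorial statement.

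The argument iteratively builds a coordinate decision tree, using Proposition~\ref{projections-1} as the main engine. At each node we carry a conditioned variable $X^{(t)}$ with current entropic doubling $k_t \coloneqq d_\ent(X^{(t)}, -X^{(t)}) \leq k_0 \leq \log K$. We attempt to find a coordinate projection $\pi_i \colon \Z^D \to \Z$ for which $d_\ent(\pi_i(X^{(t)}), -\pi_i(X^{(t)}))$ exceeds some absolute threshold $\delta > 0$; if so, we branch on the value of $\pi_i(X^{(t)})$. Applying Proposition~\ref{projections-1} to the pair $(X^{(t)}, -X^{(t)})$, the budget $k_t$ decomposes as the projection contribution (at least $\delta$) plus an averaged fiber contribution. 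A Markov/pigeonhole argument, combined with Lemma~\ref{improv-ruzsa} and the Ruzsa triangle inequality~\eqref{triangle} to pass from averaged to pointwise Ruzsa distances, then shows that for a set of values $y$ of $\pi_i(X^{(t)})$ carrying total probability bounded below by an absolute constant, the conditional variable $X^{(t+1)} \coloneqq (X^{(t)} \mid \pi_i(X^{(t)}) = y)$ satisfies $k_{t+1} \leq k_t - c\delta$ for an absolute $c > 0$.

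If, on the other hand, every coordinate projection satisfies $d_\ent(\pi_i(X^{(t)}), -\pi_i(X^{(t)})) < \delta$ (with $\delta$ chosen sufficiently small compared to the $\eps_0$ of Proposition~\ref{sec3-prop}), then Proposition~\ref{sec3-prop} forces each marginal $\pi_i(X^{(t)})$ to be close in entropic Ruzsa distance to the uniform distribution on some finite subgroup of $\Z$; since the only finite subgroup of $\Z$ is trivial, each $\pi_i(X^{(t)})$ is concentrated near a single value, and hence so is $X^{(t)}$ itself. The node then becomes a leaf of the tree at the cost of one more constant-factor loss in probability.

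Each productive iteration strictly reduces $k_t$ by at least $c\delta$, so the decision tree has depth $O(k_0/\delta) = O(\log K)$, which gives $\dim_*(A') \ll \log K$ by construction. Telescoping the per-step probability losses (each bounded below by an absolute constant) over $O(\log K)$ iterations yields $|A'|/|A| \geq K^{-O(1)}$. The main technical obstacle is the single-step reduction sketched above: converting the averaged fiber inequality in Proposition~\ref{projections-1} into a per-branch decrease of entropic doubling with controlled probability loss requires careful use of Lemma~\ref{improv-ruzsa}, the Ruzsa triangle inequality, and Markov's inequality in tandem. A secondary concern is ensuring that the $d_\ent$ budget does not inflate geometrically across the $O(\log K)$ iterations; this is guaranteed by the explicit subtractive (rather than multiplicative) form of Proposition~\ref{projections-1}.
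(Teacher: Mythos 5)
Your plan shares the skeleton of the paper's argument: project onto a coordinate, invoke Proposition~\ref{projections-1} to split the entropic doubling budget into a projection term plus an averaged fiber term, use Proposition~\ref{sec3-prop} together with the triviality of finite subgroups of $\Z$ to handle the case of small projection doubling, and induct to build a structure of skew dimension $O(\log K)$. This is the right circle of ideas. However, there is a genuine gap in the key step, and it is precisely the one that the paper works hardest to avoid.

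You propose to iterate a \emph{symmetric} quantity $k_t = d_\ent(X^{(t)}, -X^{(t)})$ and claim the per-branch decrease $k_{t+1} \leq k_t - c\delta$. To derive this from Proposition~\ref{projections-1} applied to the pair $(X^{(t)}, -X^{(t)})$, you must pass from the averaged \emph{bilinear} quantity
\[
  \sum_{y_1,y_2} p_{Y_1}(y_1)\, p_{Y_2}(y_2)\, d_\ent\bigl((X\,|\,Y_1=y_1),\, -(X\,|\,Y_1=-y_2)\bigr) \;\leq\; k_t - \delta
\]
to a \emph{self}-doubling bound $d_\ent(Z_y, -Z_y)$ for a single fiber $Z_y = (X^{(t)} \mid \pi_i(X^{(t)})=y)$. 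The only tools available for this conversion (the triangle inequality~\eqref{triangle} and Lemma~\ref{improv-ruzsa}) are \emph{multiplicative}: two applications of the triangle inequality give, after averaging,
\[
  \sum_y p_y\, d_\ent(Z_y, -Z_y) \;\leq\; 3\sum_{y_1,y_2} p_{y_1} p_{y_2}\, d_\ent(Z_{y_1}, -Z_{y_2}) \;\leq\; 3(k_t - \delta),
\]
and $3(k_t - \delta) \leq k_t - c\delta$ holds only when $k_t \lesssim \delta$, i.e., in the trivial regime. So the subtractive decrement you rely on is destroyed by the factor-of-three blowup, and the budget \emph{does} inflate geometrically across iterations, contrary to what you assert at the end of the plan. (A secondary issue: in your leaf case, each coordinate marginal $\pi_i(X^{(t)})$ being concentrated near a point does not imply that $X^{(t)}$ is, since the union bound over $D$ coordinates costs a factor that is not absolute.)

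The fix, which is the central structural choice of the paper, is to work \emph{bilinearly} throughout: instead of tracking $\sigma_\ent[X^{(t)}]$, prove a statement about two sets $A, B$ controlled by $d_\ent(U_A, U_B)$, and in the inductive step pass to a \emph{pair} of fibers $(A_x, B_y)$ with inductive quantity $d_\ent(U_{A_x}, U_{B_y})$. Proposition~\ref{projections-1} then hands you exactly this quantity in its averaged sum, so no triangle inequality is needed and the decrement is genuinely subtractive. The theorem as stated is recovered at the very end by setting $B = -A$ and using~\eqref{structure-ineqs}. The paper also closes the size accounting differently from the ``constant loss per level'' bookkeeping you suggest: it retains \emph{all} fibers with $K_{i,j} < e^{-\eps/2}K$, uses the defining recursion of skew dimension to stitch them into a set of dimension one higher, and exploits the inequality $t \geq \log t$ to show the child size bounds telescope to exactly the target $K^{-C}|A||B|$ without any additional per-level loss.
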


This result is essentially contained in a paper~\cite{pz} of P\'alv\"olgyi and Zhelezov: whilst it is not actually stated in that paper, it is mentioned in a talk by Zhelezov~\cite[minute 27:30]{zhelezov-talk}, and can be established using the methods of~\cite{pz}. Lecture notes of the first author may be consulted for a detailed account with some simplifications~\cite{green-pz}, as well as details of the deduction of the result of Bourgain and Chang.

In fact we will establish the following slightly stronger result.

\begin{theorem}\label{pz-actual}
There is an absolute constant $C$ with the following property. Suppose that $A, B \subseteq \Z^D$. Then there are $A' \subseteq A$ and $B' \subseteq B$ with $|A'||B'| \geq e^{-Cd_\ent(A,B)}|A||B|$, and such that $\dim_* (A'), \dim_*(B') \leq C d_\ent(A, B)$.
\end{theorem}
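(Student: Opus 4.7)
The plan is to induct on the dimension $D$ of the ambient affine coordinate space, writing $k \coloneqq d_\ent(A, B)$ throughout. The base case $D = 0$ is vacuous. For $D \geq 1$ we handle two regimes separately.

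In the \emph{small-distance base case} $k \leq \eps_0$, apply Proposition~\ref{sec3-prop} to $X = U_A, Y = U_B$. Since the only finite subgroup of $\Z^D$ is $\{0\}$, the subgroup $H$ produced must be trivial, so $d_\ent(U_A, 0) = \tfrac12 \H(U_A) \leq 12 k$ and likewise for $U_B$, forcing $|A|, |B| \leq e^{24 k}$. Any pair of singletons $A' \subseteq A, B' \subseteq B$ then satisfies $\dim_*(A') = \dim_*(B') = 0 \leq C k$, and $|A'||B'| = 1 \geq e^{-48k} |A||B|$, which suffices for $C \geq 48$.

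In the \emph{inductive step} with $k > \eps_0$, fix any coordinate projection $\pi \colon V \to \Z$ and write $A_y \coloneqq A \cap \pi^{-1}(y)$, $p_y \coloneqq |A_y|/|A|$ (and $B_y, q_y$ analogously). Applying Proposition~\ref{projections-1} to $U_A, U_B$ with projection $\pi$ yields
\[
k \;\geq\; d_\ent(\pi(A), \pi(B)) + \sum_{y_1, y_2} p_{y_1}\, q_{y_2}\, d_\ent(A_{y_1}, B_{y_2}).
\]
By averaging in $y_2$ pick $y_2^*$ with $\sum_{y_1} p_{y_1}\, d_\ent(A_{y_1}, B_{y_2^*}) \leq 2k$, and let $S_A \coloneqq \{y_1 : d_\ent(A_{y_1}, B_{y_2^*}) \leq T\}$, which by Markov has $p$-mass at least $1 - 2k/T$ for any $T$. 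Each pair $(A_{y_1}, B_{y_2^*})$ lies in a $(D-1)$-dimensional affine coordinate space, so the inductive hypothesis applied to each produces $A'_{y_1} \subseteq A_{y_1}$, $B'_{y_1, y_2^*} \subseteq B_{y_2^*}$ with the desired skew-dimension and product-mass bounds. Setting $A' \coloneqq \bigsqcup_{y_1 \in S_A} A'_{y_1}$, the inductive definition of $\dim_*$ gives $\dim_*(A') \leq CT + 1$; the construction of $B'$ is symmetric, using a value $y_1^{**}$ chosen analogously from the $A$-side.

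The \emph{main obstacle} is closing the induction with an absolute constant $C$. A direct Markov choice $T = O(k)$ yields $\dim_*(A') \leq CT + 1$, which is strictly larger than $Ck$, so the constant would inflate at each step. I expect the resolution is to exploit the $d_\ent(\pi(A), \pi(B))$ term in Proposition~\ref{projections-1}: choosing $\pi$ to maximise this contribution $\Delta$ drops the expected fiber distance to $k - \Delta$, so $T$ can be taken genuinely below $k$. The delicate case is when every coordinate projection has Ruzsa distance $\lesssim \eps_0$, in which case Proposition~\ref{sec3-prop} applied coordinatewise already forces $A, B$ into low-complexity configurations reducible to the base case. Balancing these two regimes carefully, perhaps via a single combined potential tracking both skew-dimension growth and mass loss, should yield the desired absolute $C$.
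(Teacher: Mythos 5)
Your high-level skeleton (project onto a coordinate, apply Proposition~\ref{projections-1}, recurse on fibres, and invoke Proposition~\ref{sec3-prop} to handle the small-distance regime using that $\Z$ has no nontrivial finite subgroups) matches the paper's, and the treatment of the true base case $k \leq \eps_0$ is fine. However, the obstacle you flag at the end is real, and the resolution you sketch does not close it.

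The difficulty with ``pick $\pi$ to maximise $\Delta \coloneqq d_\ent(\pi(U_A),\pi(U_B))$'' is quantitative: from Proposition~\ref{projections-1} and an averaging choice of $y_2^*$ you only get $\sum_{y_1} p_{y_1}\, d_\ent(A_{y_1}, B_{y_2^*}) \leq k - \Delta$, so keeping most of the mass by Markov forces $T \gg k - \Delta$, and the skew-dimension recursion gives $\dim_*(A') \leq CT + 1$. For this to be $\leq Ck$ you need $T \leq k - 1/C$, hence $\Delta \geq k/2 + O(1/C)$. The complementary regime $\Delta < k/2 + O(1/C)$ (for every coordinate $\pi$) is not the small-distance regime $\Delta \leq \eps_0$, and Proposition~\ref{sec3-prop} gives you nothing useful there; the claim that ``all projections have small distance'' reduces to the base case is not justified. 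There is also a mass-accounting issue: the inductive hypothesis yields a bound on $|A'_{y_1}|\,|B'_{y_1,y_2^*}|$ with $B'$ depending on $y_1$, so your disjoint union $A' = \bigsqcup_{y_1 \in S_A} A'_{y_1}$ paired with a $B'$ built by a separate symmetric construction does not inherit the product lower bound.

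The paper resolves this differently. It fixes $\eps$ once and for all and sets $C \coloneqq 2/\eps$, so the constant $C$ and the case threshold $\eps$ are locked together; it projects onto the first coordinate (no maximisation over $\pi$), and it splits on whether $\Delta = d_\ent(Y_1,Y_2)$ is $\leq \eps$ or $> \eps$, where $Y_i$ are the projections. When $\Delta \leq \eps$: Proposition~\ref{sec3-prop} forces $\H(Y_1)+\H(Y_2) \leq C\Delta$, and a single pigeonhole choice of one fibre pair $(i,j)$ with $K_{i,j} \leq K (p_{Y_1}(i)p_{Y_2}(j))^{1/C}$ lets the induction close \emph{without} any skew-dimension increment (you only take one fibre, not a union), the mass loss being absorbed into the $K_{i,j}^{-C}$ term. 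When $\Delta > \eps$: one restricts to the set $S$ of pairs $(i,j)$ with $\log(K/K_{i,j}) > \eps/2$; on $S$ the fibre distances are strictly smaller, so the skew-dimension of each fibre's output is $\leq C(\log K - \eps/2) = C\log K - 1$, and the union over fibres then costs exactly $+1$, giving $C\log K$ as required. The product-mass bound is obtained by summing over all $(i,j) \in S$ (choosing $A'_i$ as the largest $A'_{i,j}$ and $B'_j$ as the largest $B'_{i,j}$ to decouple the two factors) and applying $t \geq \log t$ to $t = (K/K_{i,j})^C$. Finally, the induction is on $|A|\,|B|$ (together with $D$), not on $D$ alone, since Case 1 recurses into a single fibre pair which need not reduce $D$ but does reduce $|A|\,|B|$ unless both projections are singletons.
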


Setting $B=-A$ and using~\eqref{structure-ineqs}, we recover Theorem~\ref{pz-main}.  The ``bilinear'' form of Theorem~\ref{pz-actual} will be convenient for induction purposes.

\subsubsection*{Dimension and PFR over $\Z$}
Whilst the notion of skew-dimension is useful in the context of the work of Bourgain and Chang, the actual (affine) dimension $\dim A$, defined as the dimension of the span of $A-A$ over the reals, is a more intrinsically natural quantity.  Note that $\dim_* A \leq \dim A$ for any $A$.
It is conjectured that Theorem~\ref{pz-main} remains true with $\dim_* A'$ replaced by $\dim A'$ -- this is sometimes (see, for instance,~\cite{mcc},~\cite[Conjecture 1]{pz}) called the weak Polynomial Freiman--Ruzsa conjecture (PFR) over $\Z$.  (The term `weak' comes from the fact that only the dimension is controlled, and no attempt is made to put $A$ economically inside a box or homomorphic image of the lattice points in a convex set. Such stronger statements are also speculated to be true, but care must be taken in their formulation, as discussed in~\cite{lovett-regev}.)

\begin{conjecture}[Weak PFR over $\Z$]%
  \label{weak-pfr-z}
  Suppose that $A \subseteq \Z^D$ is a set with $\sigma[A] \leq K$. Then there is a subset $A' \subseteq A$, $|A'| \geq K^{-O(1)}|A|$, with $\dim A' \ll \log K$.
\end{conjecture}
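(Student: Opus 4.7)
My plan is to deduce Conjecture~\ref{weak-pfr-z} from the entropic form of the Polynomial Freiman--Ruzsa conjecture over $\F_2$ (a theorem of Gowers--Green--Manners--Tao), combined with the projection and peeling machinery of the earlier sections. By Proposition~\ref{sec2-prop}, it suffices to prove the entropic analogue: for every $\Z^D$-valued random variable $X$ there is a finite set $S \subseteq \Z^D$ with $\dim S \ll d_\ent(X, X)$ and $d_\ent(X, U_S) \ll d_\ent(X, X)$; taking $X = U_A$ and invoking~\eqref{structure-ineqs} recovers the combinatorial conjecture.

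The approach is iterative mod-$2$ reduction. Set $k := d_\ent(X,X)$ and $X_0 := X$. At step $i$, let $Y_i := \pi(X_i) \in \F_2^{D_i}$ where $\pi$ is reduction mod $2$; by Proposition~\ref{projections-1} one has $d_\ent(Y_i, Y_i) \leq d_\ent(X_i, X_i)$, and entropic PFR over $\F_2$ then produces a subgroup $H_i \leq \F_2^{D_i}$ with $d_\ent(Y_i, U_{H_i})$ small. After translating and an integral unimodular change of basis, we may identify $\pi^{-1}(H_i) \subseteq \Z^{D_i}$ with $\Z^{d_i} \oplus 2\Z^{D_i - d_i}$, where $d_i := \dim_{\F_2} H_i$. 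Conditioning on the high-probability event $\{Y_i \in H_i\}$ and decomposing $X_i = (U_i, 2 X_{i+1})$ peels off $d_i$ new dimensions (recorded in $U_i \in \Z^{d_i}$) and leaves a residual $X_{i+1} \in \Z^{D_i - d_i}$ to iterate on.

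The heart of the argument is bounding simultaneously the cumulative peeled dimension $\sum_i d_i \ll k$ and the cumulative density loss $\prod_i \P(Y_i \in H_i) \geq K^{-O(1)}$. The main tool to reach for is the full form of Proposition~\ref{projections-1}, which at each step reads
\[
d_\ent(X_i, X_i) \;\geq\; d_\ent(Y_i, Y_i) \,+\, \sum_{h_1, h_2 \in H_i} p_{Y_i}(h_1)\, p_{Y_i}(h_2)\, d_\ent\bigl(X_{i+1}^{h_1}, X_{i+1}^{h_2}\bigr),
\]
together with the cardinality estimate $\log|H_i| \leq \H(Y_i) + O(d_\ent(X_i, X_i))$ extracted from the PFR conclusion. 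The hope is that a carefully chosen potential function (most naturally a combination of $d_\ent(X_i, X_i)$ and $\H(X_i)$ suitably weighted) telescopes and yields both bounds at once.

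The principal obstacle is the dimension accounting. The subgroup $H_i$ from PFR has $\dim_{\F_2} H_i \approx \H(Y_i)/\log 2$, a quantity that is a priori not controlled by $k_i := d_\ent(X_i, X_i)$, so extracting $\sum_i d_i \ll k$ requires showing that conditioning on $Y_i \in H_i$ and peeling off $U_i$ genuinely consumes an entropy budget that is bounded across all iterations. Proposition~\ref{sec3-prop} should come into play once the $k_i$'s shrink into its $\eps$-regime, and the maximal Ruzsa distance $d_\ent^*$ of Lemma~\ref{improv-ruzsa}, together with its triangle inequality, should help handle the coupling between the conditional residuals $X_{i+1}^{h_1}, X_{i+1}^{h_2}$ appearing in the summation above.
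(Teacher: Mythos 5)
Your high-level strategy --- reduce mod $2$, apply entropic PFR over $\F_2$, and iterate --- points in the right direction, and it is indeed close in spirit to the paper's proof of Theorem~\ref{manners-thm-new} (whose bipartite form is Theorem~\ref{thm83}), from which Conjecture~\ref{weak-pfr-z} follows once $C_{\PFR}=1$ is available. However, the specific iteration you describe has two gaps that are not just bookkeeping.

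The first and most concrete problem is the conditioning on the \emph{whole coset} $\{Y_i \in H_i\}$. Proposition~\ref{projections-1} controls the weighted average of $d_\ent(X_i^{h_1}, X_i^{h_2})$ over individual fibres $h_1, h_2$, but after conditioning on $\{Y_i \in H_i + y_0\}$ your residual $X_{i+1}$ is a \emph{mixture} of those fibres, and the entropic Ruzsa distance of a mixture is not bounded by the average of the fibrewise distances. Concretely, writing $q_h \propto p_{Y_i}(h)$ for $h \in H_i+y_0$ and $X_i' := (X_i \mid Y_i \in H_i+y_0)$, one has
\begin{equation*}
d_\ent(X_i', X_i') \;\leq\; \sum_{h_1,h_2} q_{h_1} q_{h_2}\, d_\ent(X_i^{h_1}, X_i^{h_2}) \;+\; \H\bigl(\pi(X_i) \mid Y_i \in H_i+y_0\bigr),
\end{equation*}
and the extra entropy term is of size up to $\log|H_i| \ll k_i$; even the first sum carries a $1/\P(Y_i \in H_i+y_0)^2$ normalisation. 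So all you get is $k_{i+1} \leq C k_i$ for some absolute $C>1$, and the sequence $k_i$ can grow geometrically. This makes the cumulative accounting $\sum_i d_i \ll k$ and $\prod_i \P(Y_i \in H_i) \geq K^{-O(1)}$ that you ask for unattainable by this route. The paper sidesteps this entirely by pigeonholing to a \emph{single} fibre $Y=h$ rather than a coset: that is exactly the content of Lemma~\ref{lem:hom-lemma}, which converts the averaged bound of Proposition~\ref{projections-1} into the existence of one fibre where both the density loss and the residual Ruzsa distance are simultaneously controlled.

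The second problem is that the ``potential function'' you gesture at is the genuinely hard part of the argument, and your two candidate invariants ($\sum_i d_i$ and cumulative density) are not the ones that telescope. In the paper, the dimension bound is almost free: the recursion is on $|A|+|B|$, the ambient lattice is replaced at each step by the one spanned by the surviving fibre, and the only place a dimension bound appears is the terminal case $H = \F_2^D$ (where $D \ll k$ follows from the size bound on $H$). The delicate part is the density loss, and the telescoping is arranged through the function $f(t) = C_1 t(1+t^{1-1/C_{\PFR}})$ together with the nested iteration of Lemma~\ref{iterative-small}, which stays \emph{inside} a fixed $\F_2^D$ and repeatedly halves $\H(\psi_i(X))+\H(\psi_i(Y))$ via Lemma~\ref{lem54} until a stopping criterion tied to $d_\ent$ is reached. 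Your scheme instead shuttles $\Z \to \F_2 \to \Z$ at every step, which is what forces the coset conditioning and loses control. If you want to rescue your approach, replace the coset conditioning by the single-fibre pigeonhole of Lemma~\ref{lem:hom-lemma}, induct on $|A|+|B|$ rather than on an explicit step counter, and build the iterated-halving lemma inside $\F_2^D$ before returning to $\Z^D$.
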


Prior to this paper, the best known bound in the direction of this conjecture was a result of the second author~\cite{manners}.

\begin{theorem}[{\cite[Theorem 1.5]{manners}}]%
  \label{manners-thm}
  Suppose that $A \subseteq \ZZ^D$ is
	a finite set with $\sigma[A] \leq K$.
  Then there is $A' \subseteq A$ with $\frac{|A'|}{|A|} \gg \exp(-C \log^2 K)$ and $\dim A' \ll \log K$. 
\end{theorem}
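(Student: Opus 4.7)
The plan is to run an induction on the affine dimension $\dim A$, driven by Proposition~\ref{projections-1} (submodularity of entropic Ruzsa distance under homomorphisms), so as to iteratively pass from $A$ to a codimension-one restriction whose entropic doubling is comparable to that of $A$. Set $X = U_A$, so that $k := d_\ent(X, -X) \leq \log K$. If $\dim A \leq C \log K$ the theorem is immediate; otherwise, one inductive step aims to find a nontrivial linear functional $\phi \colon \Z^D \to \Z$ (not identically zero on $A - A$) and a fiber $\phi^{-1}(y)$ such that $A \cap \phi^{-1}(y)$ has density at least $K^{-O(1)}$ inside $A$, reduces the affine dimension by at least one, and has entropic Ruzsa distance not appreciably larger than $k$.

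For the inductive step, I would apply Proposition~\ref{projections-1} with $X_1 = X$, $X_2 = -X$ and $\pi = \phi$, which yields
\[
  k \;\geq\; d_\ent(\phi X, -\phi X) \;+\; \sum_{y_1, y_2} p_{\phi X}(y_1)\, p_{\phi X}(y_2)\, d_\ent\!\bigl((X \,|\, \phi X = y_1),\, (-X \,|\, \phi X = -y_2)\bigr),
\]
so the weighted average of the fiber Ruzsa distances is at most $k$. If $\phi$ is chosen so that $\H(\phi X) = O(\log K)$, then a pigeonhole produces a pair $(y_1, y_2)$ with conditional Ruzsa distance $O(k)$ and combined fiber mass $p_{\phi X}(y_1) p_{\phi X}(y_2) \geq K^{-O(1)}$. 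Taking the fiber $A_1 := A \cap \phi^{-1}(y_1)$ reduces dimension by at least one, loses only a factor $K^{-O(1)}$ in density, and keeps the entropic Ruzsa distance $O(k)$, so the induction can continue.

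After an initial reduction by a quantitative Freiman--Ruzsa theorem over $\Z$ (e.g., Sanders' bound) to the case $\dim A \leq \poly(\log K)$, the induction terminates in at most $\poly(\log K)$ rounds, each costing a factor $K^{-O(1)} = \exp(-O(\log K))$ in density. A careful entropic bookkeeping---relying on Proposition~\ref{sec2-prop} at each stage to produce a combinatorial approximant $U_S$ with doubling $K^{O(1)}$, so that Freiman-type structure is available---yields total loss $\exp(-O(\log^2 K))$. The main obstacle is producing, uniformly across iterations, a compressing direction $\phi$ with $\H(\phi X) = O(\log K)$ and $\phi \not\equiv 0$ on $A - A$: this is the entropic counterpart of a ``short GAP generator'', guaranteed to exist from the small-doubling hypothesis on the combinatorial approximant, but requiring care to exploit via Proposition~\ref{projections-1} without letting the entropic Ruzsa distance drift upward across many stages.
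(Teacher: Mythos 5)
Your plan correctly identifies the main tools (Proposition~\ref{projections-1} for passing to fibers, Proposition~\ref{sec2-prop} to produce combinatorial approximants, and an iterative density/dimension bookkeeping leading to $\exp(-O(\log^2K))$ loss), but the step you flag as the ``main obstacle'' is a genuine gap, not a technicality.

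You ask for a nontrivial linear functional $\phi\colon \Z^D \to \Z$ with $\H(\phi(U_A)) = O(\log K)$, to be used in the inductive step. No such $\phi$ is guaranteed to exist from the small-doubling hypothesis alone: in the ambient coordinates of $\Z^D$, every integer-valued linear functional on a set with small doubling may have image entropy much larger than $\log K$ (think of a set lying in a ``skew'' generalized arithmetic progression). Proving that \emph{some} integral direction compresses $A$ is essentially a Freiman-type structural statement --- it is close to what you are trying to prove, so the induction would be circular, or would require invoking Freiman's theorem at every step, which destroys the quantitative bounds. The paper avoids this by replacing $\Z$-valued projections with the mod-$2$ reduction $\phi\colon \Z^D \to \F_2^D$: Lemma~\ref{lem:uneven-mod-2} (proved via Lemma~\ref{lem73}, $d_\ent(X,2Y)\le 5d_\ent(X,Y)$) shows \emph{unconditionally} that $\H(\phi(U_A)) \le 10\, d_\ent(U_A,U_B)$, so the ``compressing direction'' is always the mod-$2$ map. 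A mod-$2$ fiber does not directly reduce affine dimension; instead Theorem~\ref{bil-manners-thm} is proved by induction on $|A|+|B|$, dropping to a sublattice whenever both mod-$2$ images are singletons, and the dimension bound materialises in the terminal case $H=\F_2^D$.

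There is a secondary issue in the bookkeeping. Your pigeonhole yields a fiber with conditional Ruzsa distance ``$O(k)$'', but compounding a constant-factor increase over $\poly(\log K)$ rounds would blow up. What the paper actually exploits (via Lemma~\ref{lem:hom-lemma}) is that one can choose a fiber with $k' \le k$, \emph{not} merely $k'=O(k)$, together with a density loss bounded by a multiple of $k-k'$. The convexity bookkeeping with $f(t)=C_1t(1+t)$ is exactly what turns the $k-k'$ decrements into the cumulative $\exp(-O(\log^2 K))$ bound without any drift. Your scheme as stated, losing a fixed $K^{-O(1)}$ per round while allowing $k$ to grow, does not close.
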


The proof of this result in~\cite{manners} was a little exotic, making use of projections modulo $2$ and a kind of ``$U^3$-energy''. We provide a new, shorter, proof of this result, retaining the first feature but using entropic notions in place of the exotic energy. 

We will eventually go further in this paper by using results on sets with additive structure in $\F_2^D$ to improve the bounds, but before doing that we make a detour into the world of structure theorems for sets of small doubling in $\F_2^D$.

\subsubsection*{Small doubling in $\F_2^D$ and PFR over $\F_2$}
In the following discussion, $\F_2^D$ denotes the vector space of dimension $D$ over $\F_2$; the value of $D$ is typically somewhat unimportant. Essentially everything we have to say would work equally well over other finite fields $\F$, but this introduces some further technicalities and implied constants would need to depend on $\F$, and we do not discuss this aspect here.

Denote by $C_{\PFR}$ any
constant for which the following statement is true: if $A \subseteq \F_2^D$, and if the doubling constant $\sigma[A]$ is at most $K$, then $A$ is covered by $\exp(O(\log^{C_{\PFR}}(2 K)))$ cosets of some subspace $H \leq \F_2^D$ of size at most $|A|$. The implied constant in the $O()$ notation is allowed to depend on $C_{\PFR}$.

A celebrated result of Sanders~\cite[Corollary A.2]{sanders} (together with standard covering lemmas) is that one may take $C_{\PFR} = 4$. By an improved version of the argument due to Konyagin (see~\cite[Theorem 1.4]{sanders2}), one can in fact take any $C_{\PFR} > 3$.  Strictly speaking, the statement in~\cite{sanders2} applies to more general abelian groups than $\F_2^D$, but replaces the subspace $H$ by a convex coset progression.  However, an inspection of the arguments in the characteristic $2$ case shows that the convex coset progression in this case can be taken to be a subspace (basically because the convex coset progressions are constructed via Bohr sets, which are automatically subspaces in the characteristic $2$ setting).  Alternatively, one can invoke the discrete John theorem (see~\cite[Theorem 1.6]{taovu-john}) to control the convex coset progression by a generalized arithmetic progression (up to acceptable losses, and increasing $C_{\PFR}$ by an epsilon), and then observe that in $\F_2^D$, all generalized arithmetic progressions are in fact subspaces.  We leave the details of these arguments to the interested reader.

We have the following notorious conjecture, known as the Polynomial Freiman--Ruzsa conjecture over $\F_2$.

\begin{conjecture}\label{pfr-conj} We may take $C_{\PFR} = 1$. 
\end{conjecture}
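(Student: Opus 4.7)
The plan is to invoke the entropic minimizer approach over the field $\F_2$. Using the inequalities in~\eqref{structure-ineqs} together with Propositions~\ref{sec2-prop} and~\ref{sec3-prop}, it suffices to establish the entropic formulation: there is an absolute constant $C$ such that for every $\F_2^D$-valued random variable $X^0$, one can find a finite subgroup $H \leq \F_2^D$ with
\begin{equation*}
d_\ent(X^0, U_H) \leq C \cdot d_\ent(X^0, X^0).
\end{equation*}
Indeed, applied to $X^0 = U_A$ this would give $|A|/|H| \leq \exp(O(d_\ent(U_A, U_A))) \leq \exp(O(\log K))$ (since $d_\ent(U_A, U_A) = \log \sigma_\ent[U_A] \leq \log K$), so $A$ would lie in $\exp(O(\log K)) = K^{O(1)}$ cosets of $H$, establishing $C_{\PFR} = 1$.

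To prove the entropic formulation, I would fix a small positive absolute constant $\eta$ (e.g.\ $\eta = 1/9$) and introduce the \emph{tau-functional}
\begin{equation*}
\tau(X_1, X_2) \coloneqq d_\ent(X_1, X_2) + \eta \bigl(d_\ent(X_1, X^0) + d_\ent(X_2, X^0)\bigr)
\end{equation*}
on pairs of $\F_2^D$-valued random variables. A compactness argument (controlling the total entropy via the triangle inequality and passing to a weak limit of distributions) produces a minimizer $(X_1, X_2)$. The main claim is that at the minimizer $k \coloneqq d_\ent(X_1, X_2) = 0$; granted this, Proposition~\ref{sec3-prop} forces $X_1, X_2$ to be uniform on cosets of some common finite subgroup $H$, and the minimality inequality $\tau(X_1, X_2) \leq \tau(X^0, X^0) = d_\ent(X^0, X^0)$ combined with the triangle inequality (Lemma~\ref{improv-ruzsa}) would yield $d_\ent(X^0, U_H) = O(\eta^{-1}) \cdot d_\ent(X^0, X^0)$ as desired.

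The crucial ingredient for proving $k = 0$ is a characteristic-$2$ \emph{fibering identity}. Suppose $k > 0$ for contradiction, and let $Y_1, Y_3$ be independent copies of $X_1$ and $Y_2, Y_4$ independent copies of $X_2$, all mutually independent. In characteristic $2$, the common sum $S \coloneqq Y_1 + Y_2 + Y_3 + Y_4$ decomposes in three genuinely distinct ways,
\begin{equation*}
(Y_1 + Y_2) + (Y_3 + Y_4) = (Y_1 + Y_3) + (Y_2 + Y_4) = (Y_1 + Y_4) + (Y_2 + Y_3).
\end{equation*}
Using the chain rule and submodularity of Shannon entropy to exploit this redundancy---in particular, conditioning on $S$ to resolve shared information among the pairwise sums---one extracts a new pair $(X_1', X_2')$ whose $\tau$-value is provably smaller than $\tau(X_1, X_2)$ by an amount of order $\eta k$, contradicting minimality.

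The main obstacle is the delicate bookkeeping inside the fibering identity: one must verify that the slack provided by having three (rather than one) independent decompositions of $S$ is quantitatively sufficient to beat the $\eta$-perturbation losses coming from $d_\ent(X_1', X^0) + d_\ent(X_2', X^0)$. It is precisely here that characteristic $2$ is essential: over a general abelian group, the three decompositions above would involve differences as well as sums (as $-1 \neq 1$), the natural $(a,b) \mapsto (a+b, a-b)$ style symmetries degenerate, and the slack enabling the argument vanishes. Once the fibering identity is in place and delivers a strict decrease in $\tau$, the remainder of the argument consists of routine entropy manipulations as outlined above, and yields $C_{\PFR} = 1$.
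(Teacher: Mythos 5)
This statement is a \emph{conjecture} in the paper (the Polynomial Freiman--Ruzsa conjecture over $\F_2$); the paper offers no proof of it, only an equivalent entropic reformulation (Proposition~\ref{pfr-f2-prop}). So there is no ``paper proof'' to compare against, and the question is simply whether your argument actually establishes the conjecture. It does not: what you have written is a proof \emph{strategy} whose pivotal step is asserted rather than proved. The reduction to the entropic statement is fine in outline (though your one-line deduction that $A$ lies in $K^{O(1)}$ cosets of $H$ from $d_\ent(U_A,U_H)\ll\log K$ silently needs the Ruzsa covering lemma, exactly as in the proof of Proposition~\ref{pfr-f2-prop}), and the existence of a minimizer of $\tau$ for fixed $D$ is a routine compactness argument. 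The entire content of the conjecture is concentrated in your claim that if $k=d_\ent(X_1,X_2)>0$ at the minimizer, then the characteristic-$2$ fibring identity for $S=Y_1+Y_2+Y_3+Y_4$ produces a pair with strictly smaller $\tau$, with a decrement of order $\eta k$ that beats the perturbation losses. You do not carry out this step; you explicitly flag the ``delicate bookkeeping'' as the main obstacle. That bookkeeping \emph{is} the theorem. Without a concrete chain of submodularity inequalities showing that the three decompositions of $S$ force one of the conditioned sums to have small Ruzsa distance to something closer to $X^0$ --- and a verification that the constant $\eta$ can be chosen to make the net decrement positive --- nothing has been proven, and the argument as written could equally well be attempted (and fail) in settings where the conjecture is false or open.

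For what it is worth, the outline you describe (tau-functional, minimizer, fibring/entropy decrement in characteristic $2$) is precisely the architecture of the proof of this conjecture found subsequently by Gowers, Green, Manners and Tao; so the plan is not wrong, merely incomplete at its only nontrivial point. To turn it into a proof you would need, at minimum: (a) a precise ``fibring'' lemma expressing $d_\ent(X_1+X_2,\,X_1'+X_2')$ in terms of distances of conditioned variables plus a conditional mutual information term; (b) an ``improved entropic Ruzsa inequality'' controlling the distance from the conditioned variables back to $X^0$; and (c) the balancing of constants showing the decrement dominates the $\eta$-weighted penalty. None of (a)--(c) appears in your submission.
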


There are a large number of equivalent formulations of Conjecture~\ref{pfr-conj}; see~\cite{green-finite-field-notes,green-tao-u3-pfr,lovett-u3-pfr,samorodnitsky}.  We add a further equivalent form of Conjecture~\ref{pfr-conj}, formulated in terms of entropy. It says that, in the case $G = \F_2^D$, Proposition~\ref{sec3-prop} is valid with no smallness restriction on the entropic distance $d_\ent(X,Y)$.

\begin{proposition}%
  \label{pfr-f2-prop}
  Conjecture~\ref{pfr-conj} is equivalent to the claim that, for any $\F_2^D$-valued random variables $X,Y$, there is a finite subgroup $H \leq \F_2^D$ such that $d_{\ent}(X, U_H) \ll d_{\ent}(X, Y)$.
\end{proposition}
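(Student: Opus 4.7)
The statement is an equivalence; each direction is essentially a translation between combinatorial and entropic formulations using tools from the preceding subsections.

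\emph{Conjecture~\ref{pfr-conj} $\Rightarrow$ entropy formulation.} The case $d_\ent(X, Y) \leq \eps_0$ is already handled by Proposition~\ref{sec3-prop}, so I assume $d_\ent(X, Y)$ is bounded below by an absolute positive constant. I invoke Proposition~\ref{sec2-prop} with $C = 4$ to produce a non-empty finite $S \subseteq \F_2^D$ with $d_\ent(U_S, Y) \ll d_\ent(X, Y)$ and $|S - S| \leq e^{O(d_\ent(X, Y))} |S|$. Since $-S = S$ in characteristic two, $\sigma[S] \leq e^{O(d_\ent(X, Y))}$, and Conjecture~\ref{pfr-conj} furnishes a subspace $H \leq \F_2^D$ with $|H| \leq |S|$ and representatives $v_1, \dots, v_m$, $m \leq \exp(O(d_\ent(X, Y)))$, covering $S$. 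With $U_H$ independent of $U_S$, the coset of $U_S + U_H$ is determined by which $v_i + H$ contains $U_S$ and, conditionally on this data, $U_S + U_H$ is uniform on that coset; hence $\H(U_S + U_H) \leq \log m + \log|H|$, and substituting into \eqref{dent-def} gives
\[d_\ent(U_S, U_H) \leq \log m + \tfrac{1}{2}(\log|H| - \log|S|) \leq \log m \ll d_\ent(X, Y).\]
Two applications of the Ruzsa triangle inequality~\eqref{triangle} now yield $d_\ent(X, U_H) \leq d_\ent(X, Y) + d_\ent(Y, U_S) + d_\ent(U_S, U_H) \ll d_\ent(X, Y)$.

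\emph{Entropy formulation $\Rightarrow$ Conjecture~\ref{pfr-conj}.} Given $A \subseteq \F_2^D$ with $\sigma[A] \leq K$, I put $X = Y = U_A$; by~\eqref{structure-ineqs}, $d_\ent(U_A, U_A) \leq \log K$. The hypothesis produces a subgroup $H \leq \F_2^D$ with $L \coloneqq d_\ent(U_A, U_H) \ll \log K$. Let $\pi \colon \F_2^D \to \F_2^D/H$. Because $U_H$ is uniform on $H$ and independent of $U_A$, the sum $U_A + U_H$ is conditionally uniform on the $\pi(U_A)$-coset, giving $\H(U_A + U_H) = \H(\pi(U_A)) + \log|H|$; comparing this with the trivial bounds $\H(U_A + U_H) \geq \max(\log|A|, \log|H|)$ forces both $|\log|A| - \log|H|| \leq 2L$ and $\H(\pi(U_A)) \leq 2L$. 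Writing $A_v \coloneqq A \cap (v + H)$, the identity $\log|A| - \H(\pi(U_A)) = \sum_v (|A_v|/|A|) \log|A_v|$ combined with Jensen's inequality gives $\sum_v |A_v|^2 \geq |A|^2/e^{2L}$, so some coset satisfies $|B| \coloneqq |A_{v^\ast}| \geq |A|/e^{2L} \geq |A|/K^{O(1)}$. Since $B - B \subseteq H$ and $|A + B| \leq K|A| \leq K^{O(1)}|B|$ (the latter by Plünnecke--Ruzsa), Ruzsa's covering lemma supplies $T$ with $|T| \leq K^{O(1)}$ and $A \subseteq T + (B - B) \subseteq T + H$. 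If $|H| \leq |A|$ this is precisely the conclusion of PFR with $C_\PFR = 1$; otherwise $|A| < |H| \leq K^{O(1)}|A|$, and I replace $H$ by any subspace $H' \leq H$ with $|A|/2 < |H'| \leq |A|$ (available since subspaces of $\F_2^D$ have power-of-two sizes), each $H$-coset decomposing into at most $|H|/|H'| \leq K^{O(1)}$ cosets of $H'$.

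\emph{Main obstacle.} The substantive direction is the second, where one must convert a soft entropic proximity statement back to a hard combinatorial covering. The decomposition $\H(U_A + U_H) = \H(\pi(U_A)) + \log|H|$---valid only because $H$ is a subgroup with $U_H$ uniform---is the crucial bridge, translating entropic closeness into control over how $A$ distributes among cosets of $H$. The passage from a single large coset intersection $B$ to a covering of all of $A$ is then effected by Plünnecke--Ruzsa and Ruzsa covering, and the final rescaling from $H$ to $H'$ is a minor characteristic-two technicality.
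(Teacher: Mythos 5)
Your proof is correct and follows essentially the same route as the paper in both directions: Proposition~\ref{sec2-prop} with $C=4$, apply PFR to $S$, bound $d_\ent(U_S,U_H)$, triangle inequality in one direction; pass to a subgroup with small $d_\ent(U_A,U_H)$, locate a heavy coset, Ruzsa covering in the other. Two cosmetic differences: for the heavy-coset step you use Jensen on $\sum_v (|A_v|/|A|)\log|A_v|$ where the paper simply invokes $\max_x p_X(x) \geq e^{-\H(X)}$ applied to $U_A - U_H$, and your parenthetical ``by Plünnecke--Ruzsa'' for $K|A| \leq K^{O(1)}|B|$ is an unnecessary attribution (it follows directly from the lower bound on $|B|$). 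One genuine improvement: you explicitly handle the case $|A| < |H| \leq K^{O(1)}|A|$ by descending to a subspace $H' \leq H$ with $|A|/2 < |H'| \leq |A|$, a detail the paper's proof leaves implicit.
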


\subsubsection*{Small doubling and dimension, again}
We now return to the main topic, and offer the following improvement of Theorem~\ref{manners-thm}.

\begin{theorem}%
  \label{manners-thm-new}
  Suppose that $A \subseteq \Z^D$ is a finite set with $\sigma[A] \leq K$. Then there is $A' \subseteq A$ with $\frac{|A'|}{|A|} \geq \exp(-C \log^{2 - \frac{1}{C_{\PFR}}} K)$ and $\dim A' \ll \log K$.  
\end{theorem}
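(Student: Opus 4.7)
The plan is to refine the mod-$2$ reduction argument underlying Manners' proof of Theorem~\ref{manners-thm} by replacing the per-step loss of a factor $K$ with the sharper loss $\exp(O(\log^{C_{\PFR}} K_i))$ coming from the unconditional $\F_2$-PFR with exponent $C_{\PFR}$. The exponent $2 - 1/C_{\PFR}$ then emerges as the balance of the per-step costs against the number of iterations required; note that for $C_{\PFR} = 1$ (the conjectural value) the exponent becomes $1$, formally recovering Conjecture~\ref{weak-pfr-z}.

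Set $X \coloneqq U_A$, so by~\eqref{structure-ineqs} we have $d_\ent(X, -X) \leq \log K$. Let $\pi\colon \Z^D \to \F_2^D$ be reduction mod $2$ and $Y \coloneqq \pi(X)$. Since $-Y = Y$ in $\F_2^D$, Proposition~\ref{projections-1} gives $d_\ent(Y, Y) \leq \log K$ together with the averaged bound
\[
\sum_{y_1, y_2 \in \F_2^D} p_Y(y_1)\, p_Y(y_2)\, d_\ent\!\bigl((X|Y = y_1),\,-(X|Y = y_2)\bigr) \leq \log K - d_\ent(Y, Y)
\]
on the conditional distances of the fibres of $\pi$. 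Applying the unconditional combinatorial $\F_2$-PFR (with constant $C_{\PFR}$) to the image $\pi(A)\subseteq \F_2^D$ -- which has doubling at most $K$ -- one covers $\pi(A)$ by $M = \exp(O(\log^{C_{\PFR}} K))$ cosets of a subspace $H \leq \F_2^D$ with $|H| \leq |\pi(A)|$; pigeonholing yields a coset $y_0 + H$ with $\P(Y \in y_0 + H) \geq 1/M$.

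Condition $X$ on $\{Y \in y_0 + H\}$ to produce a subset $A_1 \subseteq A$ of density at least $\exp(-O(\log^{C_{\PFR}} K))$ with $\pi(A_1) \subseteq y_0 + H$. After an affine change of basis of $\Z^D$ one may assume $A_1 \subseteq \Z^{\dim H} \times (2\Z)^{D - \dim H}$, so every element of $A_1$ splits as $(a^{(1)}, 2 a^{(2)})$ with $a^{(1)} \in \Z^{\dim H}$ and $a^{(2)} \in \Z^{D - \dim H}$. Iterating the procedure on the $\Z^{D-\dim H}$-valued residual variable $a^{(2)}$ -- whose residual doubling $K_1$ is controlled by the averaged term from Proposition~\ref{projections-1} -- yields a chain of residual variables of strictly decreasing $\F_2$-rank. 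The process terminates once this rank drops below $\log K$, at which point the surviving subset $A'$ has $\dim A' \ll \log K$. The total density loss is $\exp\!\bigl(-\sum_i O(\log^{C_{\PFR}} K_i)\bigr)$; optimising the number of iterations $m$ against this cost, subject to the telescoping constraint $\sum_i \log K_i = O(\log K)$, gives the claimed exponent $2 - 1/C_{\PFR}$ by a Hölder computation.

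The main obstacle is the joint control of the residual doubling constants $K_i$ and dimensions through successive conditionings: conditioning on a fibre can naively inflate the doubling by the pigeonhole factor $M$, and the whole argument works only because the averaged form of Proposition~\ref{projections-1} produces an \emph{average} decrease of the conditional distances that telescopes across iterations. A secondary technical point is that one should work with a bilinear (two-variable) form of the statement throughout -- in the spirit of Theorem~\ref{pz-actual} -- so that all hypotheses propagate cleanly through conditioning, rescaling by $2$, and the change of basis in the residual lattice.
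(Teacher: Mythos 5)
Your outline targets the right strategy and shares several ingredients with the paper's proof --- reduction mod $2$, PFR in $\F_2^D$, a pigeonhole to a dense fibre, and a bipartite induction --- but there are three concrete gaps.

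First, the claim that $\pi(A)\subseteq\F_2^D$ ``has doubling at most $K$'' is false: combinatorial doubling need not contract under homomorphisms (see the remark after Proposition~\ref{projections-1} and~\cite[Exercise 2.2.10]{tao-vu}). What does contract is the entropic doubling, giving $d_\ent(\pi(U_A),\pi(U_A))\leq\log K$; the paper therefore invokes the entropic reformulation of PFR, Proposition~\ref{combined}, rather than the covering statement directly.

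Second, even using the entropic form, a \emph{single} application of PFR produces a subgroup $H$ with $\log|H|$ as large as $\ll\log^{C_{\PFR}}K$, which is far too big to be compatible with $\dim A'\ll\log K$. The paper's Lemma~\ref{iterative-small} is exactly the device that fixes this: it iterates the PFR step inside $\F_2^D$, halving the projected entropy at each stage, so that the telescoping sum bounding $\log|H|$ is controlled by the \emph{initial} mod-$2$ entropy, which is $\ll\log K$ by Lemma~\ref{lem:uneven-mod-2}. Your one-shot pigeonhole followed by recursion on the residual $a^{(2)}$ has no such mechanism; moreover the residual variable $a^{(2)}$ can have full rank in $\Z^{D-\dim H}$, so ``residual $\F_2$-rank below $\log K$'' does not directly bound $\dim A'$.

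Third, the closing ``H\"older optimisation subject to $\sum_i\log K_i=O(\log K)$'' is not actually available: nothing in your setup bounds $\sum_i\log K_i$ or the number of iterations. The paper instead runs an induction on $|A|+|B|$ with the cost function $f(t)=C_1 t(1+t^{1-1/C_{\PFR}})$; the per-step density loss from Lemma~\ref{lem:hom-lemma}, combined with a geometric-mean interpolation between the two bounds $\H(\tilde\phi(U_A))+\H(\tilde\phi(U_B))\ll k$ and $\H(\tilde\phi(U_A))+\H(\tilde\phi(U_B))\ll d(1+d^{C_{\PFR}-1})$, takes the form $\ll k^{1-1/C_{\PFR}}(k-k')$, which telescopes against $f$. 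This inductive bookkeeping, not a global optimisation over step lengths, is the source of the exponent $2-1/C_{\PFR}$.
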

We remark that the constant $C$ is allowed to depend on $C_{\PFR}$ (and so by implication on the implied constant in the definition of $C_{\PFR}$).
As it turns out, the implied constant in the $\ll$ is independent of $C_{\PFR}$, and in particular can be taken to be $40/\log 2$.

Thus, using the Konyagin/Sanders result we can obtain 
\[ \frac{|A'|}{|A|} \gg \exp(-C\log^{5/3 + o(1)} K),\] which is the strongest unconditional result currently known. Perhaps more interestingly, we obtain the following conditional implication between the two forms of the Polynomial Freiman--Ruzsa conjecture discussed above.

\begin{corollary}%
  \label{pfr-weak-pfr}
	Conjecture~\ref{pfr-conj} implies Conjecture~\ref{weak-pfr-z}.
\end{corollary}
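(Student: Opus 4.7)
The plan is to read the corollary off directly from Theorem~\ref{manners-thm-new}. Assume Conjecture~\ref{pfr-conj}. By the definition of $C_{\PFR}$ given immediately before Conjecture~\ref{pfr-conj}, this assumption is exactly the assertion that one may take $C_{\PFR} = 1$. I would then apply Theorem~\ref{manners-thm-new} with this value of $C_{\PFR}$: the exponent $2 - \tfrac{1}{C_{\PFR}}$ collapses to $1$, and the cardinality bound becomes
\[
  \frac{|A'|}{|A|} \geq \exp(-C \log K) = K^{-C},
\]
which is polynomial in $K^{-1}$, precisely as Conjecture~\ref{weak-pfr-z} demands. The dimension bound $\dim A' \ll \log K$ is supplied by Theorem~\ref{manners-thm-new} with an absolute implied constant (independent of $C_{\PFR}$, as the remark following that theorem explicitly emphasises), which is exactly what Conjecture~\ref{weak-pfr-z} requires. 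Combining the two bounds yields the conclusion of Conjecture~\ref{weak-pfr-z}.

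In short, Corollary~\ref{pfr-weak-pfr} is a one-line substitution into Theorem~\ref{manners-thm-new}; there is no further content to the deduction itself. The ``hard part'' is entirely upstream, namely the proof of Theorem~\ref{manners-thm-new}: passing from combinatorial doubling of $A \subseteq \Z^D$ to entropic information via~\eqref{structure-ineqs}, reducing modulo $2$ to bring $\F_2^D$ (and hence $C_{\PFR}$) into play, and then using the behaviour of $d_\ent$ under the homomorphism $\Z^D \to \F_2^D$ (Proposition~\ref{projections-1}) to control the fibres and recombine into the dimension/cardinality estimate with the quantitatively sharper exponent $2 - 1/C_{\PFR}$ rather than the $2$ of Theorem~\ref{manners-thm}. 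Once that theorem is established, no additional obstacle stands between us and Corollary~\ref{pfr-weak-pfr}.
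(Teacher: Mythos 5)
Your deduction is correct and is exactly the intended one: the paper does not give a separate proof of Corollary~\ref{pfr-weak-pfr} because, as you say, it is an immediate substitution of $C_{\PFR}=1$ into Theorem~\ref{manners-thm-new}, turning the exponent $2-1/C_{\PFR}$ into $1$ and thereby making the loss polynomial in $K$, while the dimension bound $\dim A' \ll \log K$ is supplied with an absolute implied constant. Nothing is missing.
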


\subsubsection*{Plan of the paper}
We begin by developing the theory of entropy doubling and (entropy) Ruzsa distance, as discussed above. In Section~\ref{improv-sec} we establish Lemma~\ref{improv-ruzsa}.  In Section~\ref{sec2}, we look at the link between random variables and sets and establish Proposition~\ref{sec2-prop}. In Section~\ref{section-3} we look at homomorphisms and give the (short) proof of Proposition~\ref{projections-1}. Then, in Section~\ref{sec-4-very-small}, we combine these results to prove Proposition~\ref{sec3-prop}, which relates random variables with small entropy doubling to subgroups. This section is a little lengthy but, as we indicate at the appropriate points, not all of the analysis is needed in subsequent sections.

After this, we turn to the applications to small doubling in $\Z^D$. We begin, in Section~\ref{sec5}, by proving Theorem~\ref{pz-actual} (and thus reproving Theorem~\ref{pz-main}). Then, we give a new proof of the result of the second author, Theorem~\ref{manners-thm}.

Next, we take a brief detour into structural results over $\F_2$, establishing the equivalence of the Polynomial Freiman--Ruzsa conjecture in this setting with its entropic formulation (Proposition~\ref{pfr-f2-prop}).

Finally, we return to small doubling in $\Z^D$, establishing Theorem~\ref{manners-thm-new} (and hence Corollary~\ref{pfr-weak-pfr}) in Section~\ref{sec4}. 

Finally, we remark that although we have written the paper in the context of an abelian group $G$, many of the arguments (for example the proof of Theorem~\ref{sec3-prop}) do not require this assumption.

\subsubsection*{Acknowledgements} We thank Zachary Hunter and Noah Kravitz for some corrections to the first version of the paper.

\section{An improved entropic Ruzsa triangle inequality}\label{improv-sec}

In this section we prove Lemma~\ref{improv-ruzsa}.

\begin{proof}[Proof of Lemma~\ref{improv-ruzsa}]
We begin with part (i).  It suffices to show that
\[ \H(X - Z) - \frac{1}{2}(\H(X) + \H(Z)) \leq d_{\ent}(X, Y) + d_{\ent}(Y,Z).\]
This is equivalent to establishing
\[ \H(X - Z) \leq \H(X-Y) + \H(Y-Z) - \H(Y)\]
whenever $Y$ is independent of $(X, Z)$ (but $X$ and $Z$ are not required to be independent of each other).

We apply the submodularity inequality~\eqref{submodularity} with $A = X - Y$, $B = Z$, $C = X - Z$. With these choices we have
\[ \H(A, B, C) = \H(X, Y, Z) = \H(X, Z) + \H(Y),\] 
\[  \H(A, C) = \H(X - Y, X - Z) = \H(X - Y, Y - Z)  \leq \H(X - Y) + \H(Y - Z)\] and \[ \H(B,C) = \H(Z, X - Z) = \H(X, Z).\]
In the second display we used~\eqref{union}.
Applying~\eqref{submodularity} gives part (i) of Lemma~\ref{improv-ruzsa}.

For part (ii), the first inequality $d_\ent(X,Y) \leq d^*_\ent(X,Y)$ is trivial.  For the second inequality, we apply (i) and~\eqref{triangle} to conclude that
\begin{align*} d^*_\ent(X,Y) & \leq d_\ent(X,Y) + d_\ent(Y,Y) \\ & \leq d_\ent(X,Y) + d_\ent(Y,X) + d_\ent(X,Y),\end{align*}
giving the claim.
\end{proof}

\section{From random variables to sets}\label{sec2}

The objective of this section is to prove Proposition~\ref{sec2-prop}.  Let $C, X, Y$ be as in that proposition.  We may assume without loss of generality that $X, Y$ are independent.  For brevity we adopt the notation $k \coloneqq d_\ent(X,Y)$. We need to locate a set $S$ satisfying~\eqref{approx-xs} and~\eqref{s-small-doubling}. The key lemma is the following.

\begin{lemma}\label{s-comp} There exists a finite non-empty subset $S$ of $G$ such that
\begin{equation}\label{S-size}
  \log |S| \geq \H(Y) - 2 \h\left(1 - \frac{2}{C}\right) - 4 k
\end{equation}
and such that
\begin{equation}\label{zy-bound}
  d_{\ent}(Z, Y) \leq C k + \frac{1}{2} \bigl(\H(Y) - \H(Z)\bigr)
\end{equation}
whenever $Z$ is an $S$-valued random variable.
\end{lemma}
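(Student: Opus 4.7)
We may assume (as the author does) that $X$ and $Y$ are independent, and write $k := d_\ent(X,Y) = \H(X-Y) - \tfrac12 \H(X) - \tfrac12 \H(Y)$. The standard inequality $\H(X-Y) \geq \max(\H(X), \H(Y))$ for independent variables yields $|\H(X)-\H(Y)| \leq 2k$, which we shall freely use. The plan is to construct $S$ as an appropriate level set of the probability mass function $p_X$: for a threshold $\lambda > 0$ to be chosen, set
\[
S \;:=\; \{x \in G : p_X(x) \geq \lambda\}.
\]
The threshold $\lambda$ is calibrated so that $\Pr[X \in S]$ is at least something like $1 - 2/C$, which can be arranged via a Markov-type bound applied to the self-information $-\log p_X(X)$ (whose mean is $\H(X)$). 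The appearance of $h(1-2/C)$ in the conclusion strongly suggests that a binary-entropy conditioning loss is being absorbed, and the factor of $2$ in $2h(1-2/C)$ suggests carrying out the construction in a roughly symmetric way (perhaps intersecting with an analogous level set of $p_Y$, or taking account of the conditioning twice in a two-step argument).

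For the size bound, I would combine the trivial $\log|S| \geq \H(X | X \in S)$ with the chain rule
\[
\H(X) \;=\; h(\Pr[X\in S]) + \Pr[X \in S]\,\H(X \mid X \in S) + \Pr[X \notin S]\,\H(X \mid X \notin S).
\]
Bounding the tail conditional entropy $\H(X|X\notin S)$ (e.g.\ by $\log(1/\lambda)$ combined with the level-set structure, or by a weak bound of the form $\H(X)$ weighted by $2/C$), and substituting $\H(X) \geq \H(Y)-2k$, should after careful bookkeeping produce the claimed $\log|S| \geq \H(Y) - 2h(1-2/C) - 4k$.

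For the main inequality $d^*_\ent(Z,Y) \leq Ck + \tfrac12(\H(Y)-\H(Z))$, observe that this rearranges to the uniform bound
\[
\sup_{\text{coupling}(Z, Y')} \H(Z - Y') \;\leq\; Ck + \H(Y)
\]
over all $S$-valued $Z$ and all couplings of $Z$ with $Y$. I would try to prove this by submodularity, in the spirit of Lemma~\ref{improv-ruzsa}'s proof: applying inequalities of the form $\H(A,B,C) + \H(C) \leq \H(A,C) + \H(B,C)$ to the triple $(Z,Y',X)$, where $X$ is coupled to $Z$ via the level-set structure (intuitively, every $z \in S$ is a ``typical'' value of $X$, so one can lift $Z$ to $X$ with a controlled conditional entropy). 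This should bound $\H(Z-Y')$ in terms of the controllable quantity $\H(X-Y) = k + \tfrac12 \H(X) + \tfrac12 \H(Y)$ up to corrections of size $Ck$ coming from the conditioning. The main obstacle is the universal quantifier over $Z$: the trivial bound $\H(Z-Y') \leq \H(Z) + \H(Y) \leq \log|S| + \H(Y)$ is too weak, since $\log|S|$ is of order $\H(Y)$ rather than $Ck$; and one cannot a priori reduce to $Z = U_S$ via concavity/symmetry, since while $p_Z \mapsto \sup_{\text{coupling}} \H(Z-Y')$ is concave in $p_Z$, its maximizer on the simplex of distributions supported on $S$ need not equal $U_S$ for a generic $Y$. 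The correct argument must therefore exploit the level-set structure of $S$ directly, rather than treating $S$ as an abstract support.
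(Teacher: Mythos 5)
Your construction of $S$ is not the one that makes the second inequality~\eqref{zy-bound} work. You take $S$ to be a level set of the probability mass function $p_X$, but the paper's $S$ is a level set of the function $x \mapsto D_{\KL}(x - Y \,\Vert\, X - Y)$ --- specifically $S := \{x : p_X(x) > 0,\ D_{\KL}(x - Y \,\Vert\, X - Y) \leq Ck\}$. The average of this function over $x \sim X$ equals $\H(X-Y) - \H(Y) \leq 2k$ (this is the equality case of the Gibbs-type inequality~\eqref{hzy} with $Z = X$), so Markov gives $\P(X \in S) \geq 1 - 2/C$. These two definitions of $S$ are genuinely different: $p_X(x)$ being large says nothing direct about how close the conditional law of $x - Y$ is to that of $X - Y$.

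The distinction matters precisely at the point where you correctly identify a problem. You observe that~\eqref{zy-bound} is a uniform claim over all $S$-valued $Z$ and all couplings, and that neither the trivial bound $\H(Z - Y') \leq \log|S| + \H(Y)$ nor a reduction to $Z = U_S$ works. Your proposed fix --- submodularity applied to the triple $(Z, Y', X)$, lifting $Z$ to $X$ via the level-set structure --- does not close this: there is no canonical coupling of $Z$ with $X$, and the ``controlled conditional entropy'' you invoke is exactly what is missing. The correct mechanism is much more direct. By Gibbs' inequality (or rather its consequence~\eqref{hzy}, applied with $X-Y$ in the role of the reference distribution),
\[
\H(Z - Y') - \H(Y) \;\leq\; \sum_z p_Z(z)\, D_{\KL}(z - Y \,\Vert\, X - Y)
\]
for \emph{every} coupling $(Z, Y')$ of $Z$ with $Y$, since both sides depend only on the marginals. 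If $Z$ is supported on the KL-level-set $S$, the right-hand side is at most $Ck$, and rearranging gives~\eqref{zy-bound}. This is why $S$ must be defined via the divergence and not via $p_X$: the threshold in the definition of $S$ is the very quantity that appears in the cross-entropy bound. Your argument for the size bound~\eqref{S-size} is closer in spirit to the paper's (conditioning on the indicator $1_{X \in S}$, chain rule, and the sumset bound $\H(X - Y \mid \cdot) \geq \H(Y)$), but it too needs to run against the correct $S$; in the paper the key step is that $\log|S| \geq \H(X \mid X \in S)$ is lower-bounded by $\H(Y) - \h(p)/p - 2k/p$ with $p = \P(X \in S) \geq 1/2$, which then yields~\eqref{S-size}.
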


\begin{proof}  As $X,Y$ are independent and $k = d_\ent(X,Y)$, we have
\begin{equation}\label{xy-est}
 \H(X-Y) = \frac{1}{2} \H(X) + \frac{1}{2} \H(Y) + k,
\end{equation}
and hence by~\eqref{ent-lower} it follows that
\[ \H(X - Y) - \H(Y) \leq 2k.\]
Applying the equality case of~\eqref{hzy}, we conclude that
\begin{equation}\label{xo}
\sum_x p_X(x) D_{\KL}(x - Y \Vert X - Y) \leq 2k.
\end{equation}

Inspired by this, we define $S$ by the formula
\begin{equation}%
  \label{s-def}
  S \coloneqq \bigl\{ x : p_X(x) > 0, D_{\KL}(x - Y \Vert X - Y) \leq C k\bigr\}.
\end{equation}
Denote by $A$ the random variable $A \coloneqq 1_{X \in S}$, and write $p \coloneqq \P(A = 1) = \P(X \in S)$. 
 By Markov's inequality and~\eqref{xo}, it follows that
\begin{equation}\label{bad-s-prob} p = \P(X \in S) \geq 1 - \frac{2}{C} \geq \frac{1}{2} .\end{equation}
Now we make some observations. First,
\begin{align}\nonumber
\H(X) &= \H(X, A) \\ \nonumber
& = \H(X |A) + \H(A) \\ 
& = p \H(X |A = 1) + (1 - p) \H(X | A = 0) + \h(p).\label{1-ent}
\end{align}

Second, since $Y$ is independent of $X$ and $A$, it follows using~\eqref{simple-sumset} that 
\[\H(X - Y | A = i) \geq \H(Y), \H(X | A = i)\]
for $i = 0,1$; therefore
\begin{align}\nonumber
  \H(X - Y) & \geq \H(X - Y | A) \\ \nonumber
           & = p\H(X - Y | A = 1) + (1 - p) \H(X - Y | A = 0) \\
                      & \geq p \H(Y) + \frac{1-p}{2}\bigl(\H(Y) + \H(X | A = 0)\bigr). \label{2-ent}
\end{align}

Combining~\eqref{1-ent},~\eqref{2-ent} with~\eqref{xy-est} we conclude after a short computation that 
\begin{equation}%
  \label{3-ent}
  k \geq \frac{p}{2} \H(Y) - \frac{p}{2}  \H(X | A = 1) - \frac{1}{2} \h(p).
\end{equation}

By~\eqref{convexity-bound}, we have $\H(X |A = 1) \leq \log |S|$. Substituting into~\eqref{3-ent} and rearranging yields
\[ \log |S| \geq \H(Y) - \frac{\h(p)}{p} - \frac{2 k}{p}.\] 
Using~\eqref{bad-s-prob} (and the monotone decreasing nature of $\h(p)$ for $p \geq 1/2$), we obtain~\eqref{S-size}.

Now we prove~\eqref{zy-bound}.
We may assume without loss of generality that $X,Y,Z$ are all independent.
From~\eqref{hzy} (replacing $X$ by $X - Y$ there) we have
\[
  \H(Z - Y) - \H(Y) \leq \sum_z p_Z(z) D_{\KL}(z - Y \Vert X - Y).
\]
  Note here that the Kullback--Leibler divergence is well-defined and finite. Indeed, $Z$ takes values $z \in S$, and hence by the definition of $S$ we have $p_X(z) > 0$ for such $z$. Thus if $p_{z - Y}(t) > 0$ then $p_{X - Y}(t) = \sum_{x} p_X(x) p_{x - Y}(t) \geq p_X(z) p_{z - Y}(t) > 0$.
  
  By definition of $S$, $D_{\KL}(z - Y \Vert X - Y) \leq Ck$ for $z$ in the range of $Z$, and the claim~\eqref{zy-bound} follows.  
\end{proof}

Now we are ready for the proof of Proposition~\ref{sec2-prop} itself. 

\begin{proof}[Proof of Proposition~\ref{sec2-prop}] 
We begin by establishing~\eqref{approx-xs}. Let $S$ be as in Lemma~\ref{s-comp}.  Taking $Z = U_S$ in~\eqref{zy-bound}, we have
\[
  d_{\ent}(U_S, Y) \leq C k + \frac{1}{2} \bigl(\H(Y)  -  \log |S|\bigr).
\]
The required bound~\eqref{approx-xs} then follows from~\eqref{S-size}.

Now we prove~\eqref{s-small-doubling}. Let $(Z, Z')$ be any joint random variable with $Z,Z'$ being $S$-valued. From Lemma~\ref{improv-ruzsa}(i) and~\eqref{zy-bound} we have
\begin{align*}
d^*_\ent(Z,Z') & \leq d_\ent(Z,Y) + d_\ent(Z',Y) \\ & \leq 2 C k + \H(Y) - \frac{1}{2} \H(Z) - \frac{1}{2} \H(Z')
\end{align*}
or equivalently
\begin{equation}%
  \label{hzz}
  \H(Z - Z') \leq 2C k + \H(Y).
\end{equation}
Now we observe that it is possible to choose $Z, Z'$  supported on $S$ so that $Z - Z'$ has the uniform distribution on $S - S$.
To do this, simply take $(Z,Z')$ to have distribution function
\[p_{(Z,Z')}(s_1, s_2) \coloneqq \frac{1}{|S - S| \# \{ (t_1,t_2) \in S: t_1-t_2 = s_1 - s_2 \}}\]
for $s_1,s_2 \in S$.
In this case $\H(Z - Z') = \log |S - S|$.  Using~\eqref{hzz} and~\eqref{S-size},~\eqref{s-small-doubling} follows.
\end{proof}

\begin{remark*}
  The last part of this argument has considerable similarity with~\cite[Section 5]{ruzsa-entropy}.
\end{remark*}

\section{Entropy distance under homomorphisms}\label{section-3}
In this section we establish Proposition~\ref{projections-1}. Let notation be as in the statement of that proposition.
\begin{proof}[Proof of Proposition~\ref{projections-1}]
We have
\begin{align}\nonumber \H(X_1 - & X_2 | Y_1, Y_2) \\ & = \sum_{y_1, y_2 \in H} p_{Y_1}(y_1) p_{Y_2}(y_2) \H(X_1 - X_2 | Y_1 = y_1, Y_2 = y_2),\label{eq11}\end{align}
\begin{align} \nonumber \H(X_1 | Y_1) & = \sum_{y_1 \in H} p_{Y_1}(y_1)  \H(X_1 | Y_1 = y_1) \\ & = \sum_{y_1, y_2 \in H} p_{Y_1}(y_1) p_{Y_2}(y_2) \H(X_1 | Y_1 = y_1),\label{eq12}\end{align}
and similarly
\begin{equation}\label{eq13} \H(X_2 | Y_2) = \sum_{y_1, y_2 \in H} p_{Y_1}(y_1) p_{Y_2}(y_2) \H(X_2 | Y_2 = y_2).\end{equation}
Subtracting half of~\eqref{eq12} and half of~\eqref{eq13} from~\eqref{eq11} gives
\begin{align}\nonumber \H(& X_1 - X_2  | Y_1, Y_2) - \frac{1}{2}\H(X_1|Y_1) - \frac{1}{2} \H(X_2 | Y_2) \\ & = \sum_{y_1, y_2 \in H}  p_{Y_1}(y_1) p_{Y_2}(y_2) d_\ent((X_1 | Y_1 = y_1), (X_2 | Y_2 = y_2)).\label{eq14}\end{align}
Now $X_i$ determines $Y_i$, and so 
\begin{equation}\label{eq15} \H(X_1 | Y_1) = \H(X_1) - \H(Y_1), \quad \H(X_2 | Y_2) = \H(X_2) - \H(Y_2).\end{equation}
Moreover, by~\eqref{submodularity-3},
\begin{align}\nonumber \H(X_1 - X_2 | Y_1, Y_2) &  \leq \H(X_1 - X_2 | Y_1 - Y_2) \\ & = \H(X_1 - X_2) - \H(Y_1 - Y_2)\label{eq16}\end{align} (because $X_1 - X_2$ determines $Y_1 - Y_2$).
Combining~\eqref{eq14},~\eqref{eq15},~\eqref{eq16} gives the result. In fact one sees that the difference between the LHS and the RHS in the proposition is $\H(X_1 - X_2 | Y_1 - Y_2) - \H(X_1 - X_2 | Y_1, Y_2)$.
\end{proof}

\section{Very small entropy doubling}\label{sec-4-very-small}

In this section we prove Proposition~\ref{sec3-prop}, which states that random variables $X, Y$ for which $d_{\ent}(X, Y)$ is small are close to uniform on a subgroup. We first handle the case $X = Y$ (in which case we will establish Proposition~\ref{sec3-prop} with the improved constant of $6$). Assume henceforth that $X$ is a $G$-valued random variable with $d_{\ent}(X, X) = \eps \leq \eps_0$. 

We first observe that a weak version of Proposition~\ref{sec3-prop} (which in fact suffices for many applications) follows quickly from Proposition~\ref{sec2-prop}. As observed in item (ii) after Proposition~\ref{sec2-prop}, we see that there is $S$ such that 
\begin{equation}\label{prelim-1} d_{\ent}(U_S, Y) \ll \eps^{1/2} \log \frac{1}{\eps}\end{equation} 
and 
\begin{equation}\label{prelim-2} |S-S| \leq \biggl( 1 + O\biggl( \eps^{1/2} \log \frac{1}{\eps} \biggr) \biggr) |S| < \frac{3}{2} |S|,\end{equation}
where the last inequality holds if $\eps_0$ is small enough.
A well-known classical observation of Freiman~\cite{freiman} now implies that $H \coloneqq S - S$ is a group. We recall the short proof here. 

For any $x, y \in S$, $x - S$ and $y - S$ both lie in $S - S$ and so $|(x - S) \cap (y - S)| > \frac{1}{2}|S|$. That is, there are $> \frac{1}{2}|S|$ pairs $(u,v) \in S \times S$ such that $x - u = y - v$. For each such pair, we have $x - y = u - v$. Now let $x', y' \in S$ be any other elements. Similarly, there are $> \frac{1}{2}|S|$ pairs $(u', v') \in S \times S$ such that $x' - y' = u' - v'$. There are $> \frac{1}{2}|S|$ values of $v$ and $> \frac{1}{2}|S|$ values of $u'$, and all these values lie in $S$; therefore we must have $v = u'$ for some pair of these elements. It then follows that $(x - y) + (x' - y') = (u - v) + (u' - v') = u - v' \in S - S$. Since $x, y, x', y'$ were arbitrary, it follows that $S - S$ is closed under addition. Since it contains $0$ and is closed under taking inverses, it must be a group.

From~\eqref{dist-proj} (noting that $S$ is contained in a single coset of $H$) and~\eqref{prelim-2} we have
\[
  d_{\ent}(U_H, U_S) = \frac{1}{2} \log \frac{|H|}{|S|} \ll \eps^{1/2} \log \frac{1}{\eps}.
\]
Therefore from~\eqref{prelim-1} and~\eqref{triangle} we have
\begin{equation}%
  \label{prelim-3}
  d_\ent(U_H, Y) \ll \eps^{1/2} \log \frac{1}{\eps}.
\end{equation}
This is weaker than Proposition~\ref{sec3-prop} only in the non-linear dependency on $\eps$, which in many applications is not important.%
\vspace*{8pt}

We will deduce the stronger statement of Proposition~\ref{sec3-prop} by bootstrapping this bound. To do this, we require two lemmas which are essentially special cases of Proposition~\ref{sec3-prop} itself. First, we consider the case in which $X$ is highly concentrated near one point. 

\begin{lemma}\label{lem-11}
  There is $\delta_0 > 0$ such that the following is true.
  Suppose that $X$ is a $G$-valued random variable and $x_0 \in G$ is a value such that $\P(X = x_0) \geq 1 - \delta_0$. Then $\H(X) \leq 2 d_{\ent}(X, X)$.
\end{lemma}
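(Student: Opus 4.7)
The strategy is to derive an \emph{exact} expression for $d_\ent(X,X)$ by conditioning on the Bernoulli indicators $A_i \coloneqq 1_{X_i = x_0}$, and then to control the resulting correction term $\H(A_1, A_2 \mid X_1 - X_2)$.

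By translation invariance I may assume $x_0 = 0$. Set $\delta \coloneqq \P(X \neq 0)$, which by hypothesis is at most $\delta_0$, and let $X' \coloneqq (X \mid X \neq 0)$, so that
\[
  \H(X) = \h(\delta) + \delta \H(X').
\]
Let $X_1, X_2$ be independent copies of $X$, and $A_i \coloneqq 1_{X_i = 0}$. Then $A_1, A_2$ are independent with $\H(A_1, A_2) = 2\h(\delta)$, and a direct case analysis over the four values of $(A_1, A_2)$ gives
\[
  \H(X_1 - X_2 \mid A_1, A_2) = 2\delta(1 - \delta) \H(X') + \delta^2 \H(X_1' - X_2'),
\]
where $X_1', X_2'$ are independent copies of $X'$. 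Equating the two expansions of $\H(X_1 - X_2, A_1, A_2)$ via the chain rule and subtracting $\H(X)$ yields the identity
\[
  d_\ent(X, X) = \h(\delta) + \delta(1 - 2\delta)\H(X') + \delta^2 \H(X_1' - X_2') - \H(A_1, A_2 \mid X_1 - X_2).
\]

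Rearranging, the desired bound $\H(X) \leq 2 d_\ent(X, X)$ becomes
\[
  2 \H(A_1, A_2 \mid X_1 - X_2) \leq \h(\delta) + \delta(1 - 4\delta) \H(X') + 2 \delta^2 \H(X_1' - X_2').
\]
For $\delta \leq 1/4$ the last two terms on the right are nonnegative, so it is enough to prove $\H(A_1, A_2 \mid X_1 - X_2) \leq \h(\delta)/2$.

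For this final step I would bound the support of $(A_1, A_2)$ conditional on $X_1 - X_2 = z$. If $z \neq 0$ then the value $(1, 1)$ is excluded, so there are at most three possibilities and the conditional entropy is at most $\log 3$. If $z = 0$ then $(A_1, A_2) \in \{(1,1),(0,0)\}$, and a direct computation using $\|p_{X'}\|_2^2 \leq 1$ gives $\P\bigl((A_1, A_2) = (0, 0) \mid X_1 - X_2 = 0\bigr) \leq \delta^2/(1-\delta)^2$, so the conditional entropy is at most $\h\bigl(\delta^2/(1-\delta)^2\bigr)$. Since $\P(X_1 - X_2 \neq 0) \leq 1 - (1 - \delta)^2 \leq 2\delta$, combining these two cases gives
\[
  \H(A_1, A_2 \mid X_1 - X_2) \leq \h\bigl(\delta^2/(1-\delta)^2\bigr) + 2 \delta \log 3 = O(\delta),
\]
while $\h(\delta)/2 \sim \tfrac{1}{2} \delta \log(1/\delta)$ as $\delta \to 0$. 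The required inequality therefore holds for all sufficiently small $\delta$, and this is precisely the condition that fixes the absolute constant $\delta_0$. The main work is the bookkeeping to derive the exact identity for $d_\ent(X,X)$; the support-size bound and the closing asymptotic comparison are elementary.
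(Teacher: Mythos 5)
Your proof is correct, and it follows the same basic strategy as the paper (reduce to $x_0 = 0$, introduce indicator variables for the event $X_i = 0$, condition on them and use the chain rule), but it organizes the calculation in a genuinely different way. You condition on the \emph{pair} of indicators $(A_1,A_2)$, derive an exact identity
\[
  d_\ent(X, X) = \h(\delta) + \delta(1 - 2\delta)\H(X') + \delta^2 \H(X_1' - X_2') - \H(A_1, A_2 \mid X_1 - X_2),
\]
and then need only bound the single correction term $\H(A_1,A_2 \mid X_1-X_2)$; the fact that the other positive terms can simply be dropped (once $\delta \leq 1/4$) makes the rearrangement very clean. The paper instead conditions on the coarser event $A = 1_{X_1\neq 0, X_2\neq 0}$, chains one-sided inequalities towards $\H(X_1-X_2) \geq \tfrac32 \H(X)$, and its analogue of your correction term is $\H(X_1,X_2 \mid X_1-X_2, A=0)$, bounded by observing that $(X_1,X_2)$ can take at most two values given the difference and $A=0$. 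Your version isolates a cleaner quantity (an indicator entropy, bounded by $\log 3$ or $\h(\delta^2/(1-\delta)^2)$ depending on the fibre), at the modest cost of having to track slightly more bookkeeping in the identity; the paper's version requires a bit more algebraic manipulation at the end but avoids the exact identity. Both close by the same asymptotic comparison $O(\delta)$ versus $\tfrac12\h(\delta) \sim \tfrac12\delta\log\tfrac1\delta$. Your argument is complete as written, provided one notes (as you implicitly do) that $\delta_0$ must be taken small enough that both $\delta \leq 1/4$ holds and the asymptotic inequality $\h(\delta^2/(1-\delta)^2) + 2\delta\log 3 \leq \tfrac12\h(\delta)$ kicks in.
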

\begin{proof}
  By replacing $X$ by $X - x_0$ if necessary, we may assume without loss of generality that $x_0 = 0$.
  Let $X_1,X_2$ be independent copies of $X$. Then our task is equivalent to showing that 
  \begin{equation}\label{eq-form} \H(X_1 - X_2) \geq \frac{3}{2} \H(X).\end{equation}

  Write $p \coloneqq \P(X \ne 0)$ (thus $p \leq \delta_0$), and let $A$ denote the indicator function of the event that $X_1,X_2 \neq 0$; then $\P(A=0) = 1-p^2$ and $\P(A=1) = p^2$.
  As a consequence, we have
  \begin{align} \nonumber
    \H(X_1-& X_2) \geq \H(X_1-X_2|A) \\ \nonumber
               &= (1-p^2) \H(X_1-X_2 | A = 0) + p^2 \H(X_1-X_2 | X_1,X_2 \ne 0) \\
               &\geq (1-p^2) \H(X_1-X_2 | A = 0) + p^2 \H(X | X \ne 0)	\label{star-5}					
  \end{align}
	where we used~\eqref{simple-sumset} in the last line.
	
Now note that for any $z$, if $A=0$ and $X_1-X_2=z$ then $(X_1,X_2)$ can take only two values $(z,0)$ and $(0,-z)$ if $z \neq 0$, and only one value $(0,0)$ if $z=0$.  Hence
   \begin{align*}
	\H(X_1,& X_2 | A=0) - \H(X_1 -X_2 | A=0) \\ & =
	  \H(X_1,X_2 | X_1-X_2, A = 0) \\
		&\leq \P(X_1-X_2 \neq 0|A=0) \log 2 = \frac{2 p(1-p)}{1-p^2} \log 2.
  \end{align*}
	Combining with~\eqref{star-5}, we obtain
		\begin{align}\nonumber
	  \H(X_1-X_2) \geq (1-p^2) \H(& X_1, X_2 | A = 0) \\ & + p^2 \H(X | X \ne 0)- 2p(1-p) \log 2.\label{hx2}
	\end{align}
  We also observe that
  \begin{align}
    2 & \H(X) = \H(X_1,X_2) = \H(X_1,X_2,A)  = \H(X_1, X_2 | A) + \H(A) \nonumber \\
           &= (1-p^2) \H(X_1,X_2 | A=0) + p^2 \H(X_1,X_2 | A = 1) + \h(p^2) \nonumber \\
           &= (1-p^2) \H(X_1,X_2 | A=0) + 2 p^2 \H(X | X \ne 0) + \h(p^2). \label{5point1}
  \end{align}
  We further note that, writing $I$ for the indicator of $X \neq 0$, 
  \begin{equation}\label{5point2} \H(X) = \H(X | I) + \H(I) = p \H(X | X \neq 0) + \h(p).\end{equation}
  Taking~\eqref{5point1} minus $p$ times~\eqref{5point2} gives
 \begin{align*} (2 -p) \H(X) = ( 1-p^2 &) \H( X_1,X_2 | A=0) +  \\ & + p^2 \H(X | X \ne 0) + \h(p^2) - p \h(p).\end{align*}
	Combining this with~\eqref{hx2}, we obtain
	\begin{equation}\label{5point3}
    \H(X_1-X_2) \geq (2-p) \H(X) + p \h(p) - 2 p(1-p) \log 2 - \h(p^2).
  \end{equation}
  Recall that our aim is to demonstrate~\eqref{eq-form}. To get this from~\eqref{5point3}, we first note that expansion to leading order gives
  \begin{equation}\label{5point4}   2 p(1-p) \log 2 + \h(p^2) \leq \left(\frac{1}{2}-p\right) \h(p)\end{equation} provided $\delta_0$ is small enough: the LHS here is $\sim 2p \log 2$, whilst the right hand side is $\sim \frac{1}{2} p \log \frac{1}{p}$. (A more careful analysis shows that $\delta_0 = \frac{1}{20}$ is sufficient.) We also have 
  \begin{equation}\label{5point5} \h(p) = \H(I) \leq \H(X).\end{equation}

The desired bound~\eqref{eq-form} then follows immediately from~\eqref{5point3},~\eqref{5point4} and~\eqref{5point5}.
\end{proof}

\begin{remark*}
  The constant $2$ in the statement of Lemma~\ref{lem-11} can be replaced by anything larger than $1$, at the expense of making $\delta_0$ smaller. This may be shown with very minor modifications of the above argument.
\end{remark*}

We next consider the case of a random variable supported on $H$.
\begin{lemma}%
  \label{lem-12}
  Suppose that $X$ is an $H$-valued random variable with $\H(X) \geq \log |H| - \frac{1}{8}$. Then
  \[
    \log |H| - \H(X) \leq 2 d_\ent(X,X).
  \]
\end{lemma}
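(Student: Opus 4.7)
The plan is to reduce the lemma to the equivalent inequality $\delta' \leq \delta/2$, where $\delta \coloneqq \log|H| - \H(X) \geq 0$ and $\delta' \coloneqq \log|H| - \H(X_1 - X_2) \geq 0$ (both nonnegative because $X$ and $X_1 - X_2$ are $H$-valued), using the identity $d_\ent(X,X) = \delta - \delta'$.

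I would proceed by Fourier analysis on the finite abelian group $H$. Let $\mu$ denote the uniform probability measure on $H$, and write $p_X = (1+\phi)/|H|$ and $p_{X_1 - X_2} = (1+\psi)/|H|$ with $\phi, \psi$ mean zero under $\mu$. A direct computation gives $\psi = \phi *_\mu \phi^-$ (with $\phi^-(x) \coloneqq \phi(-x)$), hence $\hat\psi(\xi) = |\hat\phi(\xi)|^2$ for every character $\xi$, and Parseval gives $\|\psi\|_{L^2(\mu)}^2 = \sum_\xi |\hat\phi(\xi)|^4$. The elementary inequality $(1+t)\log(1+t) \leq t + t^2$ for $t > -1$ (verifiable via convexity analysis), combined with $\E_\mu\psi = 0$, yields $\delta' \leq \|\psi\|_{L^2(\mu)}^2$. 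By Pinsker's inequality, $\|\phi\|_{L^1(\mu)} \leq \sqrt{2\delta}$, and therefore $\|\hat\phi\|_\infty \leq \sqrt{2\delta}$. Combining,
\[
\delta' \leq \sum_\xi |\hat\phi(\xi)|^4 \leq \|\hat\phi\|_\infty^2 \sum_\xi |\hat\phi(\xi)|^2 \leq 2\delta \cdot \|\phi\|_{L^2(\mu)}^2,
\]
so the desired conclusion would follow from the bound $\|\phi\|_{L^2(\mu)}^2 \leq 1/4$ under the hypothesis $\delta \leq 1/8$.

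The main obstacle lies in this final step: the quantity $\|\phi\|_{L^2(\mu)}^2$ is not uniformly controlled by $\delta$ alone, since spike-type distributions on large groups can exhibit moderate $\delta$ with substantial $L^2$-norm. A promising way forward is to use the pointwise lower bound $(1+t)\log(1+t) - t \geq t^2/(2\max(1, 1+t))$ (valid for $t > -1$, again via calculus), which integrates against $\mu$ to give $\|\phi\|_{L^2(\mu)}^2 \leq 2\delta(1 + \|\phi\|_\infty)$; then a case analysis based on the size of $\|\phi\|_\infty$, or equivalently a peeling of the largest atoms of $p_X$ (treated via $\|\phi\|_{L^1(\mu)} \leq \sqrt{2\delta}$) and the complementary bulk (where the Taylor expansion is accurate), should close the argument. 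The numerical threshold $\delta \leq 1/8$ appears precisely tuned so that the leading-order Taylor relation $\|\phi\|_{L^2(\mu)}^2 \approx 2\delta$ yields $\|\phi\|_{L^2(\mu)}^2 \leq 1/4$ at the boundary, and the careful tracking of higher-order corrections is the principal technical task.
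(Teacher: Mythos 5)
Your reduction to $\delta' \leq \delta/2$ and the Fourier set-up ($\psi = \phi \ast_\mu \phi^-$, $\hat\psi = |\hat\phi|^2$, $\delta' \leq \|\psi\|_{L^2(\mu)}^2$ via $(1+t)\log(1+t) \leq t+t^2$, and $\|\hat\phi\|_\infty \leq \|\phi\|_{L^1(\mu)} \leq \sqrt{2\delta}$ by Pinsker) are all correct, and this is a genuinely different route from the paper's. But the argument has a real gap at exactly the point you flag: the bound $\|\phi\|_{L^2(\mu)}^2 \leq 1/4$ is simply false under $\delta \leq 1/8$. A spike $p_X(0)=a$, $p_X(x)=(1-a)/(|H|-1)$ otherwise, with $a \asymp 1/\log|H|$, has $\delta = O(1)$ small while $\|\phi\|_{L^2(\mu)}^2 \asymp |H|/\log^2|H| \to \infty$; Pinsker only controls $\|\phi\|_{L^1(\mu)}$, and $L^1$-smallness does not give $L^2$-smallness. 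Consequently the chain $\delta' \leq 2\delta\,\|\phi\|_{L^2(\mu)}^2$ is vacuous in general. Your proposed remedy, bounding $\|\phi\|_{L^2(\mu)}^2 \leq 2\delta(1 + \|\phi\|_\infty)$ and peeling the largest atoms, is not carried out, and the arithmetic is tight: plugging that bound back in gives $\delta' \leq 4\delta^2(1+\|\phi\|_\infty)$, which is $\leq \delta/2$ only when $\|\phi\|_\infty \leq \frac{1}{8\delta} - 1$, a condition that degenerates to $\|\phi\|_\infty \leq 0$ at the boundary $\delta = 1/8$. So the proposal, as it stands, does not prove the lemma.

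The paper avoids these issues entirely. After the same Pinsker step giving $\|p_X - u_H\|_1 \leq 1/2$, it invokes a coupling lemma (Lemma~\ref{lem:lp}, proved by a Hahn--Banach/convex-duality argument): since $\|p_X - u_H\|_1 + \|p_X - u_H\|_1 + \|u_H - u_H\|_1 \leq 1$, one can construct a joint law $(X_1,X_2)$ with both marginals equal to $p_X$ and with $X_1 - X_2$ exactly uniform on $H$. Then the maximal Ruzsa distance and Lemma~\ref{improv-ruzsa} give $\log|H| = \H(X_1 - X_2) \leq \H(X) + d^*_\ent(X,X) \leq \H(X) + 2d_\ent(X,X)$, which is the claim. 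This sidesteps any need to control $\|\phi\|_{L^2(\mu)}$, and it is the reason the threshold $1/8$ appears: $\sqrt{2\cdot(1/8)} = 1/2$ is what the coupling lemma requires. If you want to pursue a Fourier proof you would need a bound on $\delta'$ that does not route through $\|\psi\|_{L^2(\mu)}^2$, since the quadratic approximation to KL divergence is not uniformly valid for heavy-tailed $\psi$; the coupling approach is considerably cleaner.
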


To prove this we will use the following lemma concerning couplings of almost uniform random variables, which is plausibly of independent interest.
Here, for a probability distribution $p$ on a group $H$, we write $\Vert p - u_H\Vert_1 := \sum_{x \in H} \bigl\lvert p(x) - \frac{1}{|H|} \bigr\rvert$ for the $\ell^1$-distance of $p$ from the uniform distribution (or, equivalently, twice the total variation distance of $p$ from the uniform distribution).
\begin{lemma}%
  \label{lem:lp}
  Suppose $p_1,p_2,p_3 \colon H \to \R_{\geq 0}$ are three probability distributions on $H$ such that
\begin{equation}\label{total-tv-bound}
\Vert p_1 - u_H \Vert_1 + \Vert p_2 - u_H\Vert_1 + \Vert p_3 - u_H \Vert_1 \leq 1.
\end{equation}
  Then there exists a pair of random variables $(X,Y)$ on $H$ \textup{(}not necessarily independent\textup{)} having the marginal distributions $p_X = p_1$, $p_Y = p_2$ and $p_{X-Y} = p_3$. 
\end{lemma}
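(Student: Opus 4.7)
The plan is to establish the lemma via linear programming duality. By Farkas's lemma, a coupling $(X,Y)$ on $H^2$ with the three prescribed marginals $p_X = p_1$, $p_Y = p_2$, $p_{X-Y} = p_3$ exists if and only if, for every triple $\phi_1, \phi_2, \phi_3 \colon H \to \R$ satisfying
\[ \phi_1(u) + \phi_2(v) + \phi_3(u-v) \geq 0 \qquad \text{for all } u, v \in H, \]
one has $\sum_{i=1}^{3} \langle \phi_i, p_i \rangle \geq 0$, where $\langle \phi, p \rangle \coloneqq \sum_x \phi(x) p(x)$.

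To prove the dual inequality, I would first normalize. Replacing $\phi_i \to \phi_i + t$ and $\phi_j \to \phi_j - t$ for any $i \ne j$ preserves both the constraint and the dual objective (since each $p_i$ is a probability measure), so one may assume $\bar\phi_1 = \bar\phi_2 = 0$, where $\bar\phi$ denotes the mean; set $c \coloneqq \bar\phi_3$. Averaging the constraint over $u$ (respectively $v$) uniformly in $H$ yields $\phi_1, \phi_2 \geq -c$ pointwise, and $c \geq 0$; averaging over $w = u-v$ yields $\phi_3 \geq 0$. Next, I would write $\langle \phi_i, p_i \rangle = \bar\phi_i + \langle \phi_i - \bar\phi_i, p_i - u_H\rangle$, bound the second term using the standard inequality $|\langle \psi, g \rangle| \leq \frac{1}{2}(\max \psi - \min \psi)\|g\|_1$ (valid when $\sum_x g(x) = 0$), and bring in the hypothesis $\sum_i \|p_i - u_H\|_1 \leq 1$.

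The hard part will be controlling $\max \phi_i - \min \phi_i$: the naive estimate obtained from $\bar\phi_i = 0$ and $\phi_i \geq -c$ only gives $\max \phi_i \leq (|H|-1)c$, losing a factor of order $|H|$. To close the argument, I would exploit the joint constraint more sharply---for example, if $\phi_1(u_0)$ is large positive, the inequality $\phi_2(v) + \phi_3(u_0 - v) \geq -\phi_1(u_0)$ imposes pointwise structure on $\phi_2 + \phi_3$ at shifted arguments, which together with the averaging bounds constrains how much ``spike mass'' the three $\phi_i$ can jointly support. As an alternative route, one can try the explicit linear ansatz $q(u,v) \coloneqq \frac{1}{|H|^2} + \frac{1}{|H|}\bigl(f_1(u) + f_2(v) + f_3(u-v)\bigr)$ where $f_i \coloneqq p_i - u_H$, which realises the correct row, column, and antidiagonal marginals but can fail pointwise nonnegativity, and then add a correction lying in the kernel of the marginal operator (a space of dimension $(|H|-1)(|H|-2)$ when $|H|\geq 3$), the required magnitude of which is controlled by the $\ell^1$ hypothesis.
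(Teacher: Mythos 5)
You correctly identify LP duality (the Hahn--Banach / Farkas reduction) as the right framework, and your dual inequality is exactly the one the paper proves. But your proposal has a genuine gap in the middle which you yourself flag: after your mean-zero normalization, the only pointwise bound you extract is $\phi_1, \phi_2 \geq -c$ (and $\phi_3 \geq 0$), and the resulting bound on $\max\phi_i - \min\phi_i$ is indeed off by a factor of order $|H|$. The two repairs you sketch are not carried out, and neither one obviously closes the factor-$|H|$ loss; in particular the ``explicit ansatz plus kernel correction'' route still requires controlling the $\ell^\infty$ size of the correction by an $\ell^1$ quantity, which is the same obstruction in disguise.

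The paper sidesteps the need to bound $\max\phi_i - \min\phi_i$ entirely, by choosing a different normalization and then working pointwise rather than via a variance-type estimate. After shifting by constants summing to zero and rescaling, one arranges $\min f_1 = \min f_2 = \min f_3 = -1$. Picking $x_0$ with $f_1(x_0) = -1$, the constraint at the pair $(x_0, y)$ gives, for \emph{every} $y$, the two pointwise facts $f_2(y) + f_3(x_0 - y) \geq 1$ and $\min\bigl(f_2(y), f_3(x_0-y)\bigr) \geq -1$. These feed into the elementary two-variable inequality
\[
  a p + b q \geq (a+b)\min(p,q) + \min(a,b)\,|p - q|,
\]
applied with $a = f_2(y)$, $b = f_3(x_0-y)$, $p = p_2(y)$, $q = p_3(x_0-y)$, which gives a lower bound of $\frac{p+q}{2} - \frac{3}{2}|p-q|$ at each $y$. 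Summing over $y$ (using $|p-q| \leq |p - \tfrac{1}{|H|}| + |q - \tfrac{1}{|H|}|$) and then cyclically permuting $(1,2,3)$ and averaging yields $\sum_i \langle f_i, p_i\rangle \geq 0$ under the $\ell^1$ hypothesis. The crucial point, which is absent from your sketch, is that one never needs a one-sided bound on how large $f_i$ can get: the dual constraint at a single worst point $x_0$, together with the equalized minima, already pins down enough of the joint behaviour of $f_2$ and $f_3$ to make each summand nonnegative up to an $\ell^1$-controlled error. To complete your proof you would need to find and prove this (or an equivalent) pointwise estimate; as written, the argument does not close.
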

\begin{proof}%
  We wish to show that the triple of distributions $(p_1,p_2,p_3) \in \R^{H} \times \R^H \times \R^H$ lies in the convex hull of the set $\Sigma := \{ (\delta_x,\delta_y,\delta_{x-y}) : x,y \in H\} \subseteq \R^H \times \R^H \times \R^H$. Here (as usual) $\delta_t(u) = 1$ if $u = t$, and $\delta_t(u) = 0$ otherwise.  By the (finite-dimensional) Hahn--Banach theorem, this is equivalent to showing that there is no hyperplane separating $(p_1, p_2, p_3)$ from $\Sigma$, or in other words whenever $f_1,f_2,f_3 \colon H \to \R$ are functions such that
\begin{equation}\label{fhyp}
f_1(x)+f_2(y)+f_3(x-y) \geq 0
\end{equation}
for all $x,y \in H$, one also has
\begin{equation}\label{fconc}
 \sum_{x \in H} f_1(x) p_1(x) + \sum_{y \in H} f_2(y) p_2(y)  + \sum_{z \in H} f_3(z) p_3(z) \geq 0.
\end{equation}
Henceforth, assume~\eqref{fhyp}. Note that~\eqref{fhyp} and~\eqref{fconc} are both unaffected if we shift $f_1, f_2, f_3$ by constants $c_1,c_2,c_3$ summing to zero.  Thus we may normalize so that
\[ \min f_1 = \min f_2 = \min f_3.\]
If this quantity is non-negative then~\eqref{fconc} is immediate, so we may assume that it is negative.  By rescaling we may thus normalize so that
\begin{equation}\label{minf}
 \min f_1 = \min f_2 = \min f_3 = -1.
\end{equation}
In particular, there exists $x_0 \in H$ such that $f_1(x_0) = -1$.  From~\eqref{fhyp} and~\eqref{minf}, we conclude that for every $y \in H$ one has
\[ f_2(y) + f_3(x_0-y) \geq 1 \quad \mbox{and} \quad\min(f_2(y), f_3(x_0-y)) \geq -1.\]
This implies that
\begin{align*}
f_2&(y) p_2(y) + f_3(x_0-y) p_3(x_0-y) \\
&\geq (f_2(y)+f_3(x_0-y)) \min(p_2(y),p_3(x_0-y)) \\
&\quad\quad+ \min(f_2(y), f_3(x_0-y)) |p_2(y) - p_3(x_0-y)| \\
&\geq \min(p_2(y),p_3(x_0-y)) - |p_2(y) - p_3(x_0-y)| \\
&= \frac{p_2(y) + p_3(x_0-y)}{2} - \frac{3}{2} \bigl\lvert p_2(y) - p_3(x_0-y) \bigr\rvert \\
&\geq \frac{p_2(y) + p_3(x_0-y)}{2} - \frac{3}{2} \biggl\lvert p_2(y) - \frac{1}{|H|} \biggr\rvert
- \frac{3}{2} \biggl\lvert p_3(x_0-y) - \frac{1}{|H|} \biggr\rvert.
\end{align*}
Summing over $y$, we conclude that
\[  \sum_{y \in H} f_2(y) p_2(y) + \sum_{z \in H} f_3(z) p_3(z) \geq 1 - \frac{3}{2} \Vert p_2 - u_H \Vert_1 - \frac{3}{2} \Vert p_3 - u_H \Vert_1.\]
Cyclically permuting the roles of $f_1,f_2,f_3$ and $p_1,p_2,p_3$ and averaging, the desired bound~\eqref{fconc} then follows from~\eqref{total-tv-bound}.
\end{proof}

\begin{proof}[Proof of Lemma~\ref{lem-12}]%
  By~\eqref{dkus} and Pinsker's inequality~\eqref{pinsker-eq} it follows that
  \begin{equation}\label{tv-bound}
   \Vert p_X - u_H \Vert_1 \leq \sqrt{2(\log |H| - \H(X))} \leq \frac{1}{2}.
  \end{equation}
Applying Lemma~\ref{lem:lp} (with $p_1=p_2=p_X$ and $p_3=\frac{1}{|H|}$), it follows that there exists a pair of random variables $(X_1,X_2)$ such that $X_1,X_2$ each have the same marginal distribution as $X$, and $X_1-X_2$ is uniform on $H$.

  Finally, Lemma~\ref{improv-ruzsa} gives
  \begin{align*}
   \log |H| =  \H(X_1-X_2) & \leq \H(X) + d^*_\ent(X,X) \\ & \leq \H(X) + d_\ent(X,X) + d_\ent(X,X),
  \end{align*}
  which immediately implies the result.
\end{proof}

\begin{proof}[Proof of Proposition~\ref{sec3-prop}] We first establish the case $X = Y$ (with the constant $12$ replaced by $6$). Suppose, as we have throughout the section, that $d_{\ent}(X, X) = \eps \leq \eps_0$. Let $\pi : G \rightarrow G/H$ be the quotient projection.  Recall from~\eqref{dist-proj} that 
  \begin{equation}\label{ento}
    d_\ent(X, U_H) = \H(\pi(X)) + \frac{1}{2} \bigl(\log |H| - \H(X)\bigr).
  \end{equation}
From~\eqref{prelim-3} we have the weak bound $d_\ent(X, U_H) \ll \eps^{1/2} \log \frac{1}{\eps}$.  Thus
  \begin{equation}\label{weak-suff-1} \H(\pi(X)) \ll \eps^{1/2} \log \frac{1}{\eps}\end{equation} and 
	\begin{equation}\label{weak-suff-2} \H(X) \geq \log |H| - O\biggl(\eps^{1/2} \log \frac{1}{\eps}\biggr).\end{equation}

  Now by Proposition~\ref{projections-1} (replacing $H$ there by $G/H$, and recalling that $d_{\ent}(X, X) = \eps$) we obtain
  \begin{equation}\label{pix-d} d_{\ent}\bigl(\pi(X), \pi(X)\bigr) \leq \eps\end{equation} 
and
  \begin{equation}%
    \label{paxy}
    \sum_{y_1, y_2 \in G/H} p_{\pi(X)}(y_1) p_{\pi(X)}(y_2) d_{\ent} (X_{y_1}, X_{y_2}) \leq \eps,
  \end{equation}
	where $X_y$ denotes $X$ conditioned to the event $\pi(X) = y$.  
	
By~\eqref{weak-suff-1} and~\eqref{px-lower}, we see that there is some $y_0 \in G/H$ such that $\P( \pi (X) = y_0) \geq 1 - O\bigl(\eps^{1/2} \log \frac{1}{\eps}\bigr)$. By translating $X$ if necessary, we may assume without loss of generality that $y_0 = 0$, that is to say
\begin{equation}\label{xh} p_{\pi(X)}(0) = \P(X \in H) \geq 1 - O\biggl(\eps^{1/2} \log \frac{1}{\eps}\biggr) \geq \max \biggl(\frac{10}{11}, 1 - \delta_0\biggr) \end{equation}
where $\delta_0$ is the constant from Lemma~\ref{lem-11}, and we assume that $\eps_0$ from the statement of Proposition~\ref{sec3-prop} is sufficiently small.

Applying Lemma~\ref{lem-11} to $\pi(X)$ using~\eqref{pix-d},~\eqref{xh}, we conclude that
\begin{equation}\label{suff-1}
 \H(\pi(X)) \leq 2\eps.
\end{equation}

Meanwhile, discarding all terms in the sum over $y_1$ in~\eqref{paxy} except the term $y_1 = 0$, and using~\eqref{xh}, it follows that 
\[  \sum_{y \in G/H} p_{\pi(X)}(y) d_{\ent}(X_0, X_y) \leq 1.1 \eps.\]
  By~\eqref{ent-lower}, this implies that
  \[ \sum_{y \in G/H}  p_{\pi(X)}(y) \left\lvert \H(X_y) - \H(X_0) \right\rvert \leq 2.2 \eps,\]
  and hence by the triangle inequality
  \[ \big| \H(X | \pi(X)) - \H(X_0) \big| \leq 2.2 \eps.\]
	Using $\H(X) = \H(X | \pi(X)) + \H(\pi(X))$ and~\eqref{suff-1}, we conclude that
	\begin{equation}\label{x-xdash} |\H(X) - \H(X_0) | \leq 4.2 \eps.\end{equation}
  In particular, from~\eqref{weak-suff-2} we deduce 
	\begin{equation}\label{Xprime-lower} \H(X_0) \geq \log |H| - O\biggl(\eps^{1/2} \log \frac{1}{\eps}\biggr).\end{equation}

  Now by discarding all terms in~\eqref{paxy} except the one with $y_1 = y_2 = 0$, and using~\eqref{xh}, we have 
  \[ d_{\ent}(X_0, X_0) \leq 1.21 \eps.\]
It follows from Lemma~\ref{lem-12} that $\H(X_0) \geq \log |H| - 2.42 \eps$, and hence by~\eqref{x-xdash} we obtain
\[ \H(X) \geq \log |H| - 6.62 \eps.\]
Combining this with~\eqref{ento} and~\eqref{suff-1} gives $d_{\ent}(X, U_H) \leq 5.31 \eps \le 6 \eps$, which is the statement of Proposition~\ref{sec3-prop} (with a better constant) in the symmetric case $X = Y$.

Finally, we deduce the general case in which $X$ and $Y$ may be different. Suppose now that $d_{\ent}(X, Y) = \eps \leq \eps'_0$, where $\eps'_0 := \eps_0/2$ with $\eps_0$ the constant above. By the triangle inequality, $d_{\ent}(X, X) \leq 2\eps \leq \eps_0$, and so by the symmetric case of Proposition~\ref{sec3-prop} established above we have $d_{\ent}(X, U_H) \leq 12 \eps$ for some subgroup $H \leq G$. Similarly, we have $d_{\ent}(Y, U_{H'}) \leq 12 \eps$ for some subgroup $H' \leq G$. 

It remains to argue that $H = H'$. For this, we observe that by the triangle inequality we have \begin{equation}\label{uhh} d(U_H, U_{H'}) \leq 25 \eps.\end{equation} If $H \neq H'$, then $H + H'$ is a subgroup of $G$ properly containing $H, H'$ and therefore of size at least $2\max(|H|, |H'|)$. Since $U_H - U_{H'}$ is uniform on $H + H'$, we have $d(U_H, U_{H'}) \geq \log 2$, which contradicts~\eqref{uhh} if $\eps_0$ is small enough. Therefore we do indeed have $H = H'$, and this concludes the proof.
\end{proof}

\section{Skew dimension and a result of P\'alv\"olgyi and Zhelezov}\label{sec5}

In this section we give the proof of Theorem~\ref{pz-actual} (and thus Theorem~\ref{pz-main}).

\begin{proof}[Proof of Theorem~\ref{pz-actual}]
Let $\eps > 0$ be a small constant to be specified later, and set $C \coloneqq  2/\eps$. We will prove Theorem~\ref{pz-actual} with this particular value of $C$.

We proceed by induction on $|A||B|$ and on $D$. Denote by $\pi\colon \Z^D \rightarrow \Z$ projection onto the first coordinate. We may assume that at least one of the sets $\pi(A), \pi(B)$ is not a singleton (otherwise $D$ may be reduced to $D - 1$).

Let $X_1, X_2$ be uniform random variables on $A, B$ respectively, and let $Y_i = \pi(X_i)$.
Applying Proposition~\ref{projections-1} and rearranging, we obtain
\begin{equation}\label{res-prop} \sum_{i, j} p_{Y_1}(i) p_{Y_2}(j) \log \frac{K}{K_{i,j}} \geq d_\ent(Y_1,Y_2)\end{equation}
where
\[
  K \coloneqq \exp\bigl( d_\ent(X_1,X_2) \bigr)
\]
and
\[
  K_{i,j} \coloneqq \exp\bigl( d_\ent((X_1|Y_1=i), (X_2|Y_2=j)) \bigr).
\]
We now divide into two cases, according to whether $d_\ent(Y_1, Y_2) \leq \eps$ or not.
\vspace*{8pt}

\noindent
\emph{Case 1:} $d_\ent(Y_1, Y_2) \leq \eps$.
Let $\eps_0$ be the constant from Proposition~\ref{sec3-prop}, and assume that $\eps \leq \min\bigl(\eps_0,\frac{1}{24}\bigr)$.
By Proposition~\ref{sec3-prop} and the fact that $H = \{0\}$ is the only finite subgroup of $\Z$, we have
\[
  d_{\ent}(Y_1, 0), d_{\ent}(Y_2, 0) \leq 12 d_\ent(Y_1,Y_2).
\]
Since $d_{\ent}(Y_i, 0) = \frac{1}{2} \H(Y_i)$, it follows that
\begin{equation}\label{hy1y2}  
  \H(Y_1) + \H(Y_2) \leq 48 d_\ent(Y_1,Y_2) \leq C d_\ent(Y_1,Y_2)
	\end{equation}
by the choice of $C, \eps$.
Inserting this into~\eqref{res-prop} and rearranging, we obtain
\[
  \sum_{i, j} p_{Y_1}(i) p_{Y_2}(j) \log \left( \frac{K_{i,j}}{K (p_{Y_1}(i) p_{Y_2}(j))^{1/C}}  \right)\leq 0.
\]
In particular, there exist $i,j$ such that
\[
K_{i,j} \leq K (p_{Y_1}(i) p_{Y_2}(j))^{1/C} \leq K.
\]
Invoking the induction hypothesis (with $A, B$ replaced by $A \cap \pi^{-1}(\{i\})$ and $B \cap \pi^{-1}(\{j\})$ respectively), we see that there are sets $A' \subseteq A, B' \subseteq B$ with 
\[
  \dim_* A', \dim_* B' \leq C \log K_{i,j} \leq C \log K
\]
and 
\begin{align*}
  |A'||B'| &\geq K_{i,j}^{-C} |A \cap \pi^{-1}(\{i\})||B \cap \pi^{-1}(\{j\})| 
   = K_{i,j}^{-C} p_{Y_1} (i) p_{Y_2}(j) |A||B| \\
  &\geq K_{i,j}^{-C}  \biggl(\frac{K_{i, j}}{K}\biggr)^{C} |A| |B| = K^{-C} |A||B|.
\end{align*} 
This closes the induction in Case 1.\vspace*{8pt}

\noindent
\emph{Case 2:} $d_\ent(Y_1, Y_2) \geq \eps$. 

In this case we see from~\eqref{res-prop} that
\[ \sum_{i,j} p_{Y_1}(i) p_{Y_2}(j) \log \frac{K}{ K_{i,j}} \geq \eps.\]
 The contribution from those $(i,j)$ with $\log \frac{K}{K_{i,j}} \leq \frac{\eps}{2}$ is at most $\frac{\eps}{2}$. Thus if we set
 \[ S \coloneqq  \biggl\{ (i,j) : \log \frac{K}{K_{i,j}} > \frac{\eps}{2} \biggr\}\] 
then
\begin{equation}\label{ond}
 \sum_{(i,j) \in S}p_{Y_1}(i) p_{Y_2}(j) \log \frac{K}{ K_{i,j}} \geq \frac{\eps}{2}.
\end{equation}
Note in particular that, if $(i,j) \in S$, 
\begin{equation}\label{kij} K_{i,j} < K.\end{equation}
By the induction hypothesis, for each pair $(i,j) \in S$ there are sets $A'_{i,j} \subseteq A$, $B'_{i,j} \subseteq B$ with
\begin{equation}\label{ind} |A'_{i,j}||B'_{i,j}| \geq K_{i,j}^{-C} p_{Y_1}(i) p_{Y_2}(j) |A| |B|,\end{equation}
and all with skew-dimension at most
 \begin{equation}\label{qd-bd} C\log K_{i,j}  \leq C\biggl(\log K  - \frac{\eps}{2}\biggr) = C \log K - 1.\end{equation} (Here we used the fact that $C = 2/\eps$.)
 
 For each $i \in \Z$, set $A'_{i}$ to be the largest of the sets $A'_{i, j}$, $(i, j) \in S$ (or $A'_i=\emptyset$ if $(i,j) \notin S$ for every $j$), and similarly for each $j \in \Z$ set $B'_j$ to be the largest of the sets $B'_{i, j}$, $(i, j) \in S$, breaking ties arbitrarily.
 Finally, set $A' \coloneqq  \bigcup_{i \in \Z} A'_i$ and $B' \coloneqq  \bigcup_{j \in \Z} B'_j$. By the definition of skew-dimension and the bound~\eqref{qd-bd}, we have $\dim_* A', \dim_* B' \leq C \log K$. 
 
From the elementary inequality $t \geq \log t$ for $t \geq 1$ applied to $t = (K/K_{i,j})^C$ (noting by~\eqref{kij} that we do indeed have $t \geq 1$), we have
\[
  K_{i,j}^{-C} \geq C K^{-C} \log \frac{K}{K_{i,j}}
\]
for any $(i,j) \in S$.  From this and~\eqref{ind},~\eqref{ond} we have
 \begin{align*}
 |A'| |B'|  & = \sum_{(i, j) \in \Z^2} |A'_i| |B'_j| \\
&\geq \sum_{(i,j) \in S} |A'_{i,j}| |B'_{i,j}| \\ 
&\geq |A| |B|\sum_{(i,j) \in S} K_{i,j}^{-C} p_{Y_1}(i) p_{Y_2}(j)  \\
&\geq |A| |B|\sum_{(i,j) \in S} \biggl( C K^{-C}  \log \frac{K}{K_{i,j}} \biggr)p_{Y_1}(i) p_{Y_2}(j)  \\
& \geq \frac{C\eps}{2} K^{-C} |A| |B| = K^{-C} |A||B|.
  \end{align*}
This completes the induction, and the theorem is proved.
\end{proof}

\section{Dimension and a result of the second author}\label{sec-manners}

We turn now to the question of the dimension (as opposed to the weaker skew-dimension) of subsets of $\Z^D$ with small doubling. Our aim in this section is to give an entropic proof of Theorem~\ref{manners-thm}. In so doing, we will also lay the groundwork for the proof of Theorem~\ref{manners-thm-new}, our improvement upon this result.

As in~\cite[Slogan 2.5]{manners}, a key idea is that a set $A \subseteq \Z^D$ with small doubling must look rather singular under the projection map $\phi : \Z^D \rightarrow \F_2^D$. In Lemma~\ref{lem:uneven-mod-2} below, we give an entropic formulation of this principle. We isolate the following lemma from the proof.

\begin{lemma}\label{lem73}
Let $G$ be torsion-free, and let $X, Y$ be $G$-valued random variables. Then $d_{\ent}(X, 2Y) \leq 5 d_{\ent}(X, Y)$.
\end{lemma}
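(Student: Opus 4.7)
The plan is to apply the Ruzsa triangle inequality \eqref{triangle} iteratively to reduce the lemma to an intrinsic inequality about $Y$ in the torsion-free setting. First, \eqref{triangle} with $Y$ as the intermediate point yields
\[
d_\ent(X, 2Y) \leq d_\ent(X, Y) + d_\ent(Y, 2Y),
\]
while a second application of \eqref{triangle} gives $d_\ent(Y, Y) \leq d_\ent(Y, X) + d_\ent(X, Y) = 2 d_\ent(X, Y)$. Consequently the lemma will follow from the intrinsic bound
\[
d_\ent(Y, 2Y) \leq 2\, d_\ent(Y, Y),
\]
which combined with the above yields $d_\ent(X, 2Y) \leq d_\ent(X, Y) + 4\, d_\ent(X, Y) = 5\, d_\ent(X, Y)$.

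To establish this intrinsic bound, I would unfold the definitions using independent copies $Y_1, Y_2$ of $Y$. Torsion-freeness renders $y \mapsto 2y$ injective, so $\H(2Y) = \H(Y)$, and the claim reduces to the Pl\"unnecke--Ruzsa-type entropic inequality
\[
\H(Y_1 - 2Y_2) + \H(Y) \leq 2\, \H(Y_1 - Y_2).
\]
The natural strategy is to introduce a third independent copy $Y_3$ of $Y$ and apply submodularity of Shannon entropy (or equivalently an appropriate instance of the entropic Ruzsa triangle inequality) to a carefully chosen triple of random variables built from $(Y_1, Y_2, Y_3)$. A promising configuration is to use variables of the form $A = X' - Y_1$, $B = X' - 2 Y_2$, $C = X'$ with $X'$ an additional independent copy of $Y$; the identity $\H(B, C) = \H(X', Y_2) = 2\H(Y)$ relies essentially on torsion-freeness (since $2 Y_2$ determines $Y_2$), and the resulting submodularity inequality should couple $\H(X' - 2 Y_2)$ to $\H(X' - Y_1)$ with the desired linear control.

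The principal obstacle is obtaining the intrinsic inequality with the correct linear dependence in $d_\ent(Y, Y)$. Naive applications of Ruzsa's triangle inequality yield bounds involving $\H(Y)$ as an additive term, which is far too weak. The torsion-free hypothesis must enter crucially: in positive characteristic such as $\F_2^D$, one has $2Y = 0$, so $d_\ent(Y, 2Y) = \frac{1}{2}\H(Y)$, which can be arbitrarily large relative to $d_\ent(Y, Y)$. Thus the key technical work lies in choosing the auxiliary variables in the submodularity step so that the coefficient $2$ in $\H(Y_1 - 2Y_2)$ interacts productively with the joint entropy structure of several independent copies of $Y$, genuinely exploiting the injectivity of the doubling map.
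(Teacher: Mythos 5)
Your preliminary reductions are fine: the triangle inequality does give $d_\ent(X,2Y) \leq d_\ent(X,Y) + d_\ent(Y,2Y)$ and $d_\ent(Y,Y) \leq 2d_\ent(X,Y)$, and in the torsion-free case $\H(2Y)=\H(Y)$, so the intrinsic statement $\H(Y_1 - 2Y_2) + \H(Y) \leq 2\H(Y_1-Y_2)$ (i.e.\ $d_\ent(Y,2Y) \leq 2 d_\ent(Y,Y)$) would indeed yield the constant $5$. The problem is that you have not proven this intrinsic inequality, and the concrete step you propose does not work. With $A = X'-Y_1$, $B = X'-2Y_2$, $C = X'$ and $X',Y_1,Y_2$ i.i.d.\ copies of $Y$, one computes $\H(A,B,C) = \H(X',Y_1,Y_2) = 3\H(Y)$, $\H(A,C) = \H(X',Y_1) = 2\H(Y)$, $\H(B,C) = \H(X',Y_2) = 2\H(Y)$ (this is where torsion-freeness enters, as you note), and $\H(C) = \H(Y)$. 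But then submodularity~\eqref{submodularity} reads $3\H(Y)+\H(Y) \leq 2\H(Y)+2\H(Y)$ --- an identity with no content, and in particular it does not couple $\H(X'-Y_1)$ to $\H(X'-2Y_2)$ at all, since neither quantity appears.

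There is also a deeper issue with the strategy. The paper does not pass through an intrinsic bound on $d_\ent(Y,2Y)$: it works directly with $X$ and independent copies $Y_1,Y_2$, applying submodularity~\eqref{submodularity} twice (once to $(X,\,Y_1-Y_2,\,X-2Y_1)$ and once to $(Y_1,\,Y_2,\,X-Y_1-Y_2)$), and only at the end uses $d_\ent(Y,Y) \leq 2d_\ent(X,Y)$ and~\eqref{ent-lower}. If one instead specialises the paper's two-step submodularity argument to the symmetric case $X=Y$, one obtains only $d_\ent(Y,2Y) \leq 3\,d_\ent(Y,Y)$, which via your reduction would give $d_\ent(X,2Y) \leq 7\,d_\ent(X,Y)$, not $5$. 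In other words, your reduction trades the bilinear target for a genuinely \emph{sharper} self-convolution inequality (constant $2$ rather than $3$), which is not at all what the available submodularity machinery delivers. I have not found a counterexample to $d_\ent(Y,2Y) \leq 2 d_\ent(Y,Y)$ (and it is sharp, e.g.\ for $Y$ uniform on $\{0,1\}$), but asserting it without proof is a genuine gap, and the route you sketch does not close it. Either prove the constant-$2$ inequality by some other means, or work directly with $X$ and $Y$ as the paper does and accept that the final appeals to $d_\ent(Y,Y) \leq 2d_\ent(X,Y)$ and $|\H(X)-\H(Y)| \leq 2d_\ent(X,Y)$ should be made at the end rather than at the start.
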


\begin{proof}
  We assume $X,Y$ are independent.  Then\footnote{The use of $d_\ent^\ast$ here simplifies an earlier version of the argument, and was suggested to the authors by Noah Kravitz.}
  \begin{align}%
    \nonumber
    \H(X-2Y) &= \H((X-Y)-Y)  \\
             &\le d^\ast_\ent(X-Y,Y) + \frac12 \H(X-Y) + \frac12 \H(Y)
             \label{eq:dast}
  \end{align}
  by definition of $d_\ent^\ast$.
  By Lemma~\ref{improv-ruzsa},
  \begin{align}%
    \nonumber
    d_\ent^\ast(X-Y,Y) &\le d_\ent(Y,Y) + d_\ent(X-Y,Y) \\
    \label{eq:strong-triangle}
                       &\le 2 d_\ent(X,Y) + d_\ent(X-Y,Y).
  \end{align}
  Letting $Y_1,Y_2$ be independent copies of $Y$ (which are also independent of $X$) we have
  \begin{equation}%
    \label{eq:dxxmy}
    d_\ent(X-Y,Y) = \H(X-Y_1-Y_2) - \frac12 \H(X-Y) - \frac12 \H(Y).
  \end{equation}
  Writing $A \coloneqq Y_1$, $B \coloneqq Y_2$ and $C \coloneqq X- Y_1 - Y_2$, we have
  \[ \H(A,B,C) = \H(X, Y_1, Y_2) = \H(X) + 2 \H(Y),\] and
  \[ \H(A, C) = \H(A, C + A) = \H(Y_1, X - Y_2) = \H(Y) + \H(X - Y_2),\]
  \[ \H(B, C) =\H(B, C + B) =  \H(Y_2, X - Y_1) = \H(Y) + \H(X - Y_1)\]
  so applying the submodularity inequality~\eqref{submodularity} gives
  \[
    \H(X-Y_1-Y_2) \leq \H(X-Y_1) + \H(X-Y_2) - \H(X).
  \]
  Combining this with~\eqref{eq:dxxmy} gives
  \[
    d_\ent(X-Y,Y) \le \frac32 \H(X-Y) - \H(X) - \frac12 \H(Y)
  \]
  which, together with~\eqref{eq:dast} and~\eqref{eq:strong-triangle}, yields
  \[
    \H(X-2Y) \le 2 d_\ent(X,Y) + 2 \H(X-Y) - \H(X) = 4 d_\ent(X,Y) + \H(Y)
  \]
  and so
  \[
    d_\ent(X,2Y) \le 4 d_\ent(X,Y) + \frac12 (\H(Y)-\H(X)) \le 5 d_\ent(X,Y)
  \]
  where we used~\eqref{ent-lower} in the last step.

\end{proof}

\begin{lemma}%
  \label{lem:uneven-mod-2}
Let $X, Y$ be $\Z^D$-valued random variables for some $D \geq 0$. Denote by $\phi : \Z^D \rightarrow \F_2^D$ the natural homomorphism. Then 
 \[
    \H(\phi(X)), \H(\phi(Y)) \leq 10 d_\ent(X,Y).
  \]
\end{lemma}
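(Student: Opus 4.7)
The plan is to combine Lemma~\ref{lem73} with Proposition~\ref{projections-1}, exploiting the fact that $\phi$ annihilates $2\Z^D$. Concretely, I would first observe that for any $\Z^D$-valued random variable $W$, the reduction mod $2$ satisfies $\phi(2W) = 0$. In particular, $\phi(X - 2Y) = \phi(X)$ and $\phi(Y - 2X) = \phi(Y)$, so applying Proposition~\ref{projections-1} to $\pi = \phi$ and the pair $(X, 2Y)$ gives
\[
d_\ent(X, 2Y) \;\geq\; d_\ent\bigl(\phi(X), \phi(2Y)\bigr) \;=\; d_\ent(\phi(X), 0),
\]
and similarly $d_\ent(Y, 2X) \geq d_\ent(\phi(Y), 0)$.

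Next I would unpack the right-hand side using the definition~\eqref{dent-def}: since $\H(0) = 0$, one has $d_\ent(Z, 0) = \H(Z) - \tfrac{1}{2}\H(Z) = \tfrac{1}{2}\H(Z)$ for any $G$-valued $Z$. Applied to $Z = \phi(X)$ and $Z = \phi(Y)$, this yields
\[
\tfrac{1}{2}\H(\phi(X)) \;\leq\; d_\ent(X, 2Y), \qquad \tfrac{1}{2}\H(\phi(Y)) \;\leq\; d_\ent(Y, 2X).
\]

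Finally, Lemma~\ref{lem73} (applied with its roles as-is, and then with $X, Y$ swapped, using symmetry of $d_\ent$) gives $d_\ent(X, 2Y) \leq 5 d_\ent(X, Y)$ and $d_\ent(Y, 2X) \leq 5 d_\ent(Y, X) = 5 d_\ent(X, Y)$. Combining these with the previous display immediately produces the desired bound $\H(\phi(X)), \H(\phi(Y)) \leq 10 d_\ent(X, Y)$.

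There is essentially no obstacle here: once Lemma~\ref{lem73} is in hand, the proof is a two-line manipulation exploiting $\phi(2\Z^D) = 0$ and the contraction of $d_\ent$ under the homomorphism $\phi$. The only point meriting care is verifying that $\Z^D$ is torsion-free, so that Lemma~\ref{lem73} applies.
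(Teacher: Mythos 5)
Your proof is correct and is essentially the same argument as the paper's: apply Proposition~\ref{projections-1} to $\phi$ and the pair $(X,2Y)$, use $\phi(2Y)=0$ and $d_\ent(Z,0)=\tfrac12\H(Z)$, then invoke Lemma~\ref{lem73} (which applies since $\Z^D$ is torsion-free). The paper dispatches the bound for $\H(\phi(Y))$ with the remark that it ``follows in the same way''; you spell out the symmetric step via $d_\ent(Y,2X)\leq 5\,d_\ent(Y,X)$, which is exactly what is meant.
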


\begin{proof}
By Proposition~\ref{projections-1} and Lemma~\ref{lem73},
\begin{equation}\label{eq983} d_{\ent}(\phi(X), \phi(2Y)) \leq d_{\ent}(X, 2Y) \leq 5 d_{\ent}(X, Y).\end{equation}
However, $\phi(2Y)$ is identically zero and so
\[ d_{\ent}(\phi(X), \phi(2Y)) = d_{\ent}(\phi(X), 0) = \frac{1}{2}\H(\phi(X)).\]
Combining this with~\eqref{eq983} gives the stated bound for $\H(\phi(X))$. The bound for $\H(\phi(Y))$ follows in the same way.
\end{proof}

\begin{remark*}
It is perhaps worth remarking on the meaning and proof of this statement. Supposing that $A \subset \Z^D$ is a set with small (combinatorial) doubling $K$, it follows that the dilate $2 \cdot A$, which is contained in $A + A$, is commensurate (up to polynomial factors in $K$) with $A$. Projecting mod $2$, one therefore expects the projection $\pi(A)$ to be commensurate with the projection $\pi(2 \cdot A) = \{0\}$.
A version of this argument appears in~\cite[Appendix~B]{manners}.
In the entropy setting, Lemma~\ref{lem73} acts as a replacement for the trivial observation that $2 \cdot A$ is contained in $A+A$.
\end{remark*}

Now we are ready for the proof of Theorem~\ref{manners-thm} itself. To make the argument work, we will in fact need to establish the following bipartite variant of the result.

\begin{theorem}%
  \label{bil-manners-thm}
  There is an absolute constant $\CC$ such that, setting $f(t) \coloneqq  \CC t(1 + t)$, the following is true. Let $D \in \N$, and suppose that $A, B \subseteq \Z^D$ are finite non-empty sets. Then there exist nonempty $A' \subseteq A$, $B' \subseteq B$ with 
  \begin{equation}%
    \label{a4-new}
    \log \frac{|A|}{|A'|} + \log \frac{|B|}{|B'|} \leq f\bigl(d_{\ent}(U_A, U_B)\bigr)
  \end{equation}
  and such that 
  $\dim A', \dim B' \leq \CC d_{\ent}(U_A, U_B)$.
\end{theorem}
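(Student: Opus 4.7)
The plan is to proceed by induction on $(|A|+|B|,\, v_2(A)+v_2(B))$ in lexicographic order, where $v_2(A) := \max\{n \geq 0 : A \subseteq a_0 + 2^n \Z^D \text{ for some } a_0\}$ (finite when $|A| \geq 2$), using the mod-$2$ reduction $\phi\colon \Z^D \to \F_2^D$ as the main tool, in the spirit of the original Manners argument but replacing the ``$U^3$-energy'' with entropic inputs. For the base case $k := d_\ent(U_A, U_B) \leq \eps_0$ (with $\eps_0$ from Proposition~\ref{sec3-prop}), torsion-freeness of $\Z^D$ forces the subgroup in Proposition~\ref{sec3-prop} to be $\{0\}$, giving $\tfrac12 \H(U_A), \tfrac12 \H(U_B) \leq 12 k$, so $|A|, |B| \leq e^{24k}$; taking $A', B'$ to be singletons gives $\dim = 0$ and size loss at most $48 k \leq f(k)$ once $C_1 \geq 48$.

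For the inductive step ($k > \eps_0$), apply Proposition~\ref{projections-1} to $\phi$, yielding
\[
  k \geq d_\ent(\phi(U_A), \phi(U_B)) + \sum_{y_1, y_2} p_A(y_1) p_B(y_2) \, d_\ent\bigl((U_A \mid \phi(U_A) = y_1), (U_B \mid \phi(U_B) = y_2)\bigr),
\]
where $p_A, p_B$ are the distributions of $\phi(U_A), \phi(U_B)$, while Lemma~\ref{lem:uneven-mod-2} gives $\H(\phi(U_A)), \H(\phi(U_B)) \leq 10 k$. The central step is to select a fiber pair $(y_1, y_2)$ for which $f(k_1) - \log(p_A(y_1) p_B(y_2)) \leq f(k)$, where $k_1$ is the corresponding conditional distance. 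Setting $A_1' := (A \cap \phi^{-1}(y_1) - \tilde y_1)/2 \subseteq \Z^D$, with $\tilde y_1 \in \Z^D$ any lift of $y_1$ (torsion-freeness permits the halving), and $B_1'$ analogously, one checks $|A_1'| = p_A(y_1)|A|$, $\dim A_1' \leq \dim A$, and $d_\ent(U_{A_1'}, U_{B_1'}) = k_1$. The induction parameter strictly decreases in lexicographic order: either some $p_A(y_1) < 1$ or $p_B(y_2) < 1$ (so $|A_1'|+|B_1'| < |A|+|B|$), or both projections are concentrated at single points (the ``free halving'' sub-case), in which case $v_2(A)+v_2(B)$ decreases by exactly $2$. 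Applying the induction hypothesis to $(A_1', B_1')$ and pulling the resulting subsets back through $x \mapsto 2x + \tilde y_i$ produces the desired $A' \subseteq A$, $B' \subseteq B$: the dimension bound follows from $k_1 \leq k$ (ensured by the selection), and the size bound from the defining property of the fiber pair.

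The main obstacle is the fiber selection. In the fully degenerate sub-case where both $\phi(U_A)$ and $\phi(U_B)$ are deterministic, only one pair $(y_1, y_2)$ is available; it satisfies $k_1 = k$ and $p_A(y_1) p_B(y_2) = 1$, giving the defining inequality with equality, and the induction descends purely in the secondary $v_2$-parameter. In the non-degenerate case, the plan is to establish existence of a good pair by a weighted averaging argument exploiting the bound $\sum p_A(y_1) p_B(y_2) k_1 \leq k - d_\ent(\phi(U_A), \phi(U_B))$, the entropy bounds from Lemma~\ref{lem:uneven-mod-2}, and the quadratic shape of $f$: for the most probable fiber pair one has $-\log(p_A p_B) \leq 20k$, while $f(k) - f(k_1) \geq 2 C_1 k$ when $k_1 \leq k - 1$, so the inequality closes once $C_1 \geq 10$. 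The delicate part is controlling fiber pairs with atypically large $k_1$ (which may carry small $p_A p_B$), and a case split on the size of $d_\ent(\phi(U_A), \phi(U_B))$ --- using Proposition~\ref{sec3-prop} inside $\F_2^D$ to handle the small regime, where both $\phi(U_A)$ and $\phi(U_B)$ are forced near a common uniform on a subspace --- looks to be the cleanest route.
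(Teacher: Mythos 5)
Your proposal follows the same broad strategy as the paper's proof — induction on $|A|+|B|$ plus reduction mod~$2$, with Proposition~\ref{projections-1} and Lemma~\ref{lem:uneven-mod-2} as the entropic inputs and a case split on the size of $\eps := d_\ent(\phi(U_A),\phi(U_B))$. But the core step, the fiber selection, is not actually carried out, and the selection criterion you do propose (``the most probable fiber pair'') does not work.

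The difficulty you yourself flag is genuine: the most probable pair $(y_1,y_2)$ need not have small $k_1 := d_\ent(U_{A_{y_1}},U_{B_{y_2}})$. In fact, if $\phi(U_A)$ is concentrated near a single point (the natural situation when $\eps$ is small), the most probable fiber has $p_A p_B \approx 1$ but $k_1 \approx k$, and $f(k_1) - \log(p_A p_B) \approx f(k)$ gives no room for the size loss from the induction on that fiber. What is needed, and what the paper supplies via Lemma~\ref{lem:hom-lemma}, is a \emph{weighted} pigeonhole that simultaneously trades off $\log \frac1{\alpha_x\beta_y}$ against $k - k_1$: one combines the identity $M = \sum \alpha_x\beta_y\log\frac1{\alpha_x\beta_y}$ (where $M = \H(\phi(U_A))+\H(\phi(U_B))$) with the bound $\sum \alpha_x\beta_y\, k_1 \leq k - \eps$ from Proposition~\ref{projections-1} to extract a pair with $\eps\log\frac1{\alpha_x\beta_y} \leq M(k-k_1)$; this is what produces a multiplier $M/\eps$ on $(k-k_1)$ that the quadratic shape of $f$ can absorb via~\eqref{f-ineqs}, provided $M/\eps \ll 1+k$. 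Your proposal never establishes a bound of this form, and without it the ``weighted averaging'' is not enough to close the induction.

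You correctly identify that when $\eps$ is small one must invoke Proposition~\ref{sec3-prop} inside $\F_2^D$, but the decisive point there — that after quotienting by the near-uniform subgroup $H$ one has $\H(\tilde\phi(U_A)) + \H(\tilde\phi(U_B)) \leq 8\, d_\ent(\tilde\phi(U_A),\tilde\phi(U_B))$ (so that the ratio $M'/\overline{k}'$ becomes an absolute constant) — requires Lemma~\ref{lem-11} and is missing from your sketch. One must also separately handle the case $H = \F_2^D$, where~\eqref{h-ups-2}-type bounds give $D \ll k$ directly and the dimension conclusion is trivial; this case does not appear in your plan. Finally, the secondary induction parameter $v_2(A) + v_2(B)$ and the halving trick are unnecessary complications: after normalizing so that the lattice generated by $(A-a_0)\cup(B-b_0)$ is all of $\Z^D$, the degenerate sub-case where both $\phi(A)$ and $\phi(B)$ are singletons simply cannot occur, so the main induction on $|A|+|B|$ suffices.
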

It is clear from~\eqref{structure-ineqs} that this indeed implies Theorem~\ref{manners-thm}.

We first prove a simple lemma which will be used several times in what follows.
\begin{lemma}%
  \label{lem:hom-lemma}
  Let $\phi \colon G \to H$ be a homomorphism, and $A,B \subseteq G$ finite subsets.
  For $x,y \in H$ write $A_x = A \cap \phi^{-1}(x)$ and $B_y = B \cap \phi^{-1}(y)$ for the fibres of $A$ and $B$, and write $\alpha_x \coloneqq \frac{|A_x|}{|A|}$ and $\beta_y \coloneqq \frac{|B_y|}{|B|}$.
  Write $k = d_\ent(U_A,U_B)$, $\overline{k} = d_\ent(\phi(U_A),\phi(U_B))$ and $M=\H(\phi(U_A))+\H(\phi(U_B))$. Then there exist $x,y \in H$ such that $A_x, B_y$ are non-empty and with
  \begin{equation}%
    \label{eq:hom-sampling}
   \overline{k} \log \frac{1}{\alpha_x \beta_y} \leq M\big(k - d_\ent(U_{A_x},U_{B_y}) \big).
  \end{equation}
\end{lemma}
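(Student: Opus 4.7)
The plan is to apply Proposition~\ref{projections-1} directly with $\pi = \phi$, $X_1 = U_A$, $X_2 = U_B$, and then extract the desired pair $(x,y)$ by a single averaging step against the product distribution $(\alpha_x \beta_y)$.

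The first step is to identify the objects that appear in Proposition~\ref{projections-1}. The marginals are $\phi(U_A) \sim (\alpha_x)_{x \in H}$ and $\phi(U_B) \sim (\beta_y)_{y \in H}$, and whenever $\alpha_x > 0$ the conditional distribution $(U_A \mid \phi(U_A) = x)$ is exactly $U_{A_x}$, with the analogous statement for $B$. With these identifications Proposition~\ref{projections-1} reads
\[
k \;\geq\; \overline{k} \;+\; \sum_{x, y \in H} \alpha_x \beta_y \, d_\ent\bigl(U_{A_x}, U_{B_y}\bigr),
\]
where terms with $\alpha_x \beta_y = 0$ are interpreted as zero by the convention in the excerpt.

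The second step is the averaging. Alongside the above inequality I would use the immediate identity
\[
\sum_{x, y \in H} \alpha_x \beta_y \log \frac{1}{\alpha_x \beta_y} \;=\; \H(\phi(U_A)) + \H(\phi(U_B)) \;=\; M.
\]
Multiplying the inequality by $M$ and subtracting $\overline{k}$ times the identity gives
\[
\sum_{x,y \in H} \alpha_x \beta_y \Bigl[ M \bigl( k - d_\ent(U_{A_x}, U_{B_y}) \bigr) - \overline{k} \log \tfrac{1}{\alpha_x \beta_y} \Bigr] \;\geq\; M \overline{k} - \overline{k} M \;=\; 0.
\]
Since the weights $\alpha_x \beta_y$ are non-negative, at least one summand must be non-negative; any $(x,y)$ achieving this automatically has $\alpha_x \beta_y > 0$, so $A_x, B_y$ are non-empty, and rearranging the bracket is precisely~\eqref{eq:hom-sampling}.

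There is no real obstacle: the lemma is essentially a one-line consequence of Proposition~\ref{projections-1} combined with elementary weighted averaging. The only points to watch are the degenerate cases $M = 0$, where both $A$ and $B$ sit in single fibres and~\eqref{eq:hom-sampling} reduces to $0 \leq 0$, and $\overline{k} = 0$, where the conclusion merely asserts the existence of a fibre pair with $d_\ent(U_{A_x}, U_{B_y}) \leq k$; both are automatically handled by the same averaging.
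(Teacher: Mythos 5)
Your proof is correct and follows essentially the same path as the paper's: both apply Proposition~\ref{projections-1} to $\phi$, use the identity $M = \sum_{x,y}\alpha_x\beta_y\log\frac{1}{\alpha_x\beta_y}$, and conclude by a weighted averaging/pigeonhole step. The only cosmetic difference is that the paper phrases the final step as ``some term of the weighted average is $\leq Mk$'' whereas you rearrange to ``some summand is $\geq 0$''; these are identical.
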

\begin{proof}
  First observe that the random variables $(U_A | \phi(U_A) = x)$ and $(U_B | \phi(U_B) = y)$ are equal in distribution to $U_{A_x}, U_{B_y}$ respectively, that is to say the uniform distributions on the fibres. 
  It follows from Proposition~\ref{projections-1} that
  \begin{equation}%
    \label{projy-1}
    \sum_{x, y \in H} \alpha_x \beta_y d_\ent(U_{A_x}, U_{B_y}) \leq k - \overline{k}.
  \end{equation}
  By definition, $M = \sum_{x,y} \alpha_x \beta_y \log \frac1{\alpha_x \beta_y}$  and hence 
  \[
    \sum_{x, y \in H} \alpha_x \beta_y \big( M d_\ent(U_{A_x}, U_{B_y}) + \overline{k} \log \frac1{\alpha_x \beta_y} \big) \leq Mk.
  \]
  It follows by the pigeonhole principle that there is at least one choice of $x,y$ such that $\alpha_x,\beta_y>0$ and
  \[
    M d_\ent(U_{A_x}, U_{B_y}) + \overline{k} \log \frac1{\alpha_x \beta_y} \leq Mk.
  \]
  Rearranging gives~\eqref{eq:hom-sampling}.
\end{proof}

We now turn to the proof of Theorem~\ref{bil-manners-thm}.
\begin{proof}[Proof of Theorem~\ref{bil-manners-thm}]
  Let us begin by noting the simple inequality
  \begin{align}%
    f(b) &= \CC b (1+b) \nonumber \\
         &\leq \CC b (1+a) = f(a) - \CC (a-b) (1+a) \label{f-ineqs}
  \end{align}
  for all $a,b \in \R$ with $0 \leq b \leq a$.

  Let us turn now to the main proof. We will proceed by induction on $|A| + |B|$.
  We may also assume that $A, B$ do not sit inside cosets of a proper subgroup of $\Z^D$, else we may replace $\Z^D$ by that subgroup.
  We also suppose $D \geq 1$, as the result is trivial otherwise.

  Let $\phi : \Z^D \rightarrow \F_2^D$ be the natural homomorphism.
  Then, by the preceding remark and the fact that $\ker \phi$ is a proper subgroup of $\Z^D$, we may assume that at least one of $\phi(A), \phi(B)$ is not a singleton.
  For $x, y \in \F_2^{D}$, denote by $A_x \coloneqq  A \cap \phi^{-1}(x)$ and $B_y \coloneqq  B \cap \phi^{-1}(y)$ the fibres of $A, B$.
  Note that
  \begin{equation}\label{fibre-decrement-a} |A_x| + |B_y| < |A| + |B|\end{equation}
  for all $x, y$.

  Write $k \coloneqq d_{\ent}(U_A, U_B)$ and
  $\eps \coloneqq d_{\ent}(\phi(U_A), \phi(U_B))$.
  Let $\delta>0$ be a small positive constant to be determined later, and set $C_1 := \max(20/\delta, 100)$.
  We will divide into two cases, according to whether or not $\eps \leq \delta$.%
  \vspace*{8pt}

  \noindent
  \emph{Case 1:} $\eps > \delta$.
  By Lemma~\ref{lem:uneven-mod-2} with $X = U_A$, $Y = U_B$, we have
  \begin{equation}\label{projy-2}
    \H(\phi(U_A)) + \H(\phi(U_B)) \leq 20k.
  \end{equation}
  By Lemma~\ref{lem:hom-lemma} applied to $G = \ZZ^D$ and $H = \F_2^D$, we may find $x,y \in \F_2^D$ such that~\eqref{eq:hom-sampling} holds. Fix such $x, y$, and for brevity set $k' \coloneqq  d_{\ent}(U_{A_x}, U_{B_y})$.  Then~\eqref{eq:hom-sampling} implies that $k' \leq k$ and
  \begin{equation}%
    \label{eq:m-ineq}
    \log \frac{|A|}{|A_x|} + \log \frac{|B|}{|B_y|} \leq \frac{20 k}{\eps} (k - k').
  \end{equation}
  Noting~\eqref{fibre-decrement-a}, we may apply the induction hypothesis to conclude that there are $A' \subseteq A_x$, $B' \subseteq B_y$ with
  \[
    \log \frac{|A_x|}{|A'|} + \log \frac{|B_x|}{|B'|} \leq f(k')
  \]
  such that $\dim A', \dim B' \leq \CC k' \leq \CC k$. This and~\eqref{eq:m-ineq} immediately imply
  \[  \log \frac{|A|}{|A'|} + 
    \log \frac{|B|}{|B'|} \leq f(k') + \frac{20k}{\eps}(k - k').\]
    By~\eqref{f-ineqs}, this is at most $f(k)$, since $C_1 \geq 20/\delta \geq 20/\eps$.
  This closes the induction in Case 1.%
  \vspace*{8pt}

  \noindent
  \emph{Case 2:} $\eps \leq \delta$. Recall here that $\eps = d_\ent(\phi(U_A), \phi(U_B))$, and note that $\eps \leq k$ by Proposition~\ref{projections-1}.
  Let $\eps_0$ be the constant from Proposition~\ref{sec3-prop} and suppose $\delta \leq \eps_0$.
  By Proposition~\ref{sec3-prop} there is some $H \leq \F_2^D$ such that 
  \[
    d_{\ent}(\phi(U_A), U_H), d_{\ent}(\phi(U_B), U_H) \leq 12 \eps.
  \]
  It is possible that $H = \F_2^D$. In this case, we have by~\eqref{ent-lower} and Lemma~\ref{lem:uneven-mod-2} that 
  \begin{align*}
    \log (2^D) =  \H(U_H) \leq \H(\phi(U_A)) + 2 d_{\ent}(\phi(U_A), U_H) \leq 10k + 24 \eps \leq 34k,
  \end{align*}
  and so $D \leq 100k$. This gives Theorem~\ref{bil-manners-thm} simply by taking $A= A'$, $B = B'$, since $\CC \geq 100$.

  Alternatively, suppose that $H$ is a proper subgroup of $\F_2^D$. Denote by $\tilde \phi$ the composition of $\phi$ with projection to $\F_2^D/H$. By~\eqref{ent-proj-dist} we have
  \[
    \H(\tilde\phi(U_A)) \leq 2 d_{\ent} (\phi(U_A), U_H) \leq 32 \eps.
  \]
  By~\eqref{px-lower} there is some $x_0$ such that $\P(\tilde \phi(U_A) = x_0) \geq e^{-32 \eps} \geq e^{-32 \delta}$.
  Choosing $\delta$ sufficiently small, this is $\geq 1 - \delta_0$ where $\delta_0$ is the constant in Lemma~\ref{lem-11}, and so by Lemma~\ref{lem-11}
  \[
    \H(\tilde\phi(U_A)) \leq 2 d_\ent \bigl(\tilde \phi(U_A), \tilde \phi(U_A)\bigr) \leq 4 d_\ent(\tilde \phi(U_A), \tilde \phi(U_B))
  \]
  where the second inequality is by~\eqref{triangle}.
  The same bound holds for $\H(\tilde\phi(U_B))$.

  Hence by Lemma~\ref{lem:hom-lemma} applied to $\tilde \phi$, $A$ and $B$ (noting that we cannot have $\H(\tilde\phi(U_A)) = \H(\tilde\phi(U_B)) = 0$, as then $A,B$ would be contained in cosets of a proper subgroup) we deduce that there exist $x \in \F_2^D / H$, $y \in \F_2^D / H$ such that
  \begin{equation}%
    \label{new-hom-bound}
    \log \frac{|A|}{|A_x|} + \log \frac{|B|}{|B_y|} \leq 8 \bigl(k - d_\ent(U_{A_x}, U_{B_y})\bigr),
  \end{equation}
  where $A_x = A \cap \tilde\phi^{-1}(x)$, $B_y = B \cap \tilde\phi^{-1}(y)$.
  
  We now finish the proof as before.
  Set $k' = d_\ent(U_{A_x}, U_{A_y})$, which is $\leq k$ by~\eqref{new-hom-bound}.
  Since $A,B$ are not contained in cosets of a proper subgroup of $\ZZ^D$, we have
  \[
    |A_x| + |B_y| < |A| + |B|
  \]
  and so by induction we may find $A' \subseteq A_x$, $B' \subseteq B_y$ with
  \[
    \log \frac{|A_x|}{|A'|} + \log \frac{|B_y|}{|B'|} \leq f(k')
  \]
  and $\dim A',\dim B' \leq \CC k' \leq \CC k$.
  Combining with~\eqref{new-hom-bound} gives
  \[ \log \frac{|A|}{|A'|} + \log \frac{|B|}{|B'|} \leq f(k') + 8(k - k').\]
  By~\eqref{f-ineqs} (and since $C_1 \geq 8$) this is at most $f(k)$.  This closes the induction in Case 2 and the proof of Theorem~\ref{bil-manners-thm}  is complete.
\end{proof}

\begin{remark*} For this argument, the full strength of Proposition~\ref{sec3-prop} was not needed, and the weaker bound~\eqref{prelim-3} would have sufficed.
\end{remark*}

\section{Entropy formulation of PFR over \texorpdfstring{$\F_2$}{F2}}

In this section we establish Proposition~\ref{pfr-f2-prop}. Recall that the content of this proposition is that the following two statements are equivalent:

\noindent
\emph{Statement 1.} If $A \subseteq \F_2^D$ and if $\sigma[A] \leq K$ then $A$ is covered by $O( K^{O(1)} )$ cosets of some subspace $H \leq \F_2^D$ of size at most $|A|$.

\noindent
\emph{Statement 2.} If $X, Y$ are two $\F_2^D$-valued random variables, there is some subgroup $H \leq \F_2^D$ such that $d_{\ent}(X, U_H), d_{\ent}(Y, U_H) \ll d_{\ent}(X, Y)$.

\begin{proof}[Proof of Proposition~\ref{pfr-f2-prop}]
We first derive the entropic statement, that is to say Statement 2 above, from the combinatorial one (Statement 1).
Write $k \coloneqq d_{\ent}(X, Y)$ and set $K \coloneqq e^k$. We may assume that $k \geq \eps_0$, where $\eps_0$ is the constant in Proposition~\ref{sec3-prop}, since the claim follows immediately from that proposition otherwise.
Applying Proposition~\ref{sec2-prop} with $C = 4$, we obtain a set $S \subseteq \F_2^D$ with 
\begin{equation}\label{ent-s-y} d_\ent(X, U_S) \ll k\end{equation}
and (recalling that $\frac{|S+S|}{|S|} \leq \left( \frac{|S-S|}{|S|} \right)^3$; see e.g.~\cite[Corollary~2.12]{tao-vu})
\begin{equation}\label{s-doubling} |S + S| \ll K^{O(1)}|S|.\end{equation}
By Statement 1 there is a subgroup $H \leq \F_2^D$, $|H| \leq |S|$, such that $S$ is covered by $O(K^{O(1)})$ cosets of $H$. Note, in particular, that $S + H$ is contained in the union of the aforementioned cosets, and so $|S + H| \ll K^{O(1)} \min( |S|, |H|)$. Now for any sets $A, B$ we have
\begin{align*}
  d_{\ent}(U_A, U_B) & = \H(U_A - U_B) - \frac{1}{2}(\H(U_A) + \H(U_B)) \\
                     & \leq \log |A-B| - \frac{1}{2} \bigl(\log |A| + \log |B|\bigr) \\
                     & = \log \left(\frac{|A - B|}{|A|^{1/2} |B|^{1/2}}\right).
\end{align*}
(This is the bipartite version of~\eqref{structure-ineqs}.)
Applying this with $A = S$ and $B = H$ (and noting $H = -H$) gives $d_{\ent}(U_S, U_H) \ll k$, and so by the triangle inequality and~\eqref{ent-s-y} we have $d_{\ent}(X, U_H) \ll k$, which is the conclusion in Statement 2. 

We turn now to the reverse implication, deriving the combinatorial Statement 1 from the entropic Statement 2.
Suppose that $A \subseteq \F_2^D$ is a set and write $K \coloneqq \sigma[A]$ and $k \coloneqq \log K$.
Then, by~\eqref{structure-ineqs}, we have $d_{\ent}(A, -A) = \log \sigma_{\ent}[A] \leq k$.
Assuming Statement 2, there is some finite subgroup $H \leq \F_2^D$ with $d_{\ent}(U_A, U_H) \ll k$.
By~\eqref{ent-lower} and the fact that $\H(U_A) = \log |A|$, $\H(U_H) = \log |H|$, we have 
\begin{equation}\label{h-size-bds} K^{-O(1)} |A| \ll |H| \ll K^{O(1)} |A|.\end{equation}
Writing $p(x)$ for the density function of $U_A - U_H$, thus $p(x) = \frac{|A \cap (H + x)|}{|A||H|}$, it follows from~\eqref{px-lower} that there is some $x_0$ such that 
\[
  p(x_0) \geq e^{-\H(U_A - U_H)} = e^{-d_{\ent}(U_A, U_H)} |A|^{-1/2} |H|^{-1/2}  \gg K^{-O(1)} |A|^{-1},
\]
or in other words $|A \cap (H + x_0)| \gg K^{-O(1)} |H|$. 

Recall the Ruzsa covering lemma (see e.g.,~\cite[Lemma 2.14]{tao-vu}), which states that if $|U + V| \leq K|U|$ then $V$ is covered by $K$ translates of $U - U$. Applying this with $U = A \cap (H + x_0)$ and $V = A$, and using the fact that $U + V \subseteq A + A$ and $U - U \subseteq H$, it follows that $A$ is covered by $O(K^{O(1)})$ translates of $H$. 

If $|H| \leq |A|$, we are done. If $|H| > |A|$, pass to a subgroup $H' \leq H$ of size in the range $(\frac{1}{2}|A|, |A|]$; then $A$ is covered by $O(K^{O(1)})$ translates of $H'$, and the proof is complete in this case also. 
\end{proof}

A minor modification of the first part of the above proof, using the quantity $C_{\PFR}$ from the introduction in place of Statement 1, gives the following statement.

\begin{proposition}\label{combined}
  Let $X, Y$ be $\F_2^D$-valued random variables, and suppose that $d_{\ent}(X, Y) = k$. Then there is some subgroup $H \leq \F_2^D$ such that $d_{\ent}(X, U_H) \leq C k (1 + k^{C_{\PFR} - 1})$, for some absolute constant $C$ \textup{(}which may depend on $C_{\PFR}$\textup{)}. 
\end{proposition}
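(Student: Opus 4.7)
The plan is to follow essentially verbatim the forward direction of the proof of Proposition~\ref{pfr-f2-prop} (from combinatorial PFR to the entropic statement), but tracking the exponent $C_{\PFR}$ rather than using the qualitative $K^{O(1)}$ bound. First I would dispose of the range where $k$ is small, say $k \leq \eps_0$ with $\eps_0$ the constant from Proposition~\ref{sec3-prop}: in this regime that proposition immediately furnishes a subgroup $H$ with $d_\ent(X,U_H) \leq 12k$, which is much stronger than the claimed bound. So I may assume $k \geq \eps_0$, and in particular $1 \leq k^{C_{\PFR}-1}$ is bounded below (up to constants depending on $\eps_0$ and $C_{\PFR}$).

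Next I would apply Proposition~\ref{sec2-prop} with $C=4$ to produce a finite nonempty set $S \subseteq \F_2^D$ with
\[ d_\ent(X,U_S) \ll k \qquad \text{and} \qquad |S-S| \leq 4 e^{12k} |S|, \]
so that $\sigma[S] \ll K^{O(1)}$ where $K \coloneqq e^k$. By the definition of $C_{\PFR}$ applied to the set $S$, there exists a subspace $H \leq \F_2^D$ with $|H| \leq |S|$ such that $S$ is covered by $\exp(O(\log^{C_{\PFR}} K)) = \exp(O(k^{C_{\PFR}}))$ cosets of $H$. Consequently $S+H$ sits inside the same union of cosets, giving
\[ |S + H| \leq \exp(O(k^{C_{\PFR}})) \min(|S|, |H|). \]

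Now I would invoke the bipartite form of~\eqref{structure-ineqs} used at the corresponding point of the proof of Proposition~\ref{pfr-f2-prop}, namely
\[ d_\ent(U_S, U_H) \leq \log \frac{|S - H|}{|S|^{1/2} |H|^{1/2}} \ll k^{C_{\PFR}}, \]
using $H = -H$ and $|H| \leq |S|$ together with the bound on $|S+H|$. Combining this with the earlier bound $d_\ent(X,U_S) \ll k$ via the triangle inequality~\eqref{triangle} yields
\[ d_\ent(X, U_H) \ll k + k^{C_{\PFR}} = k\bigl(1 + k^{C_{\PFR}-1}\bigr), \]
which is the desired conclusion (with a constant depending on $C_{\PFR}$).

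There is no serious obstacle here: the argument is a direct transcription of the first half of the proof of Proposition~\ref{pfr-f2-prop}, with the quantitative exponent $C_{\PFR}$ kept explicit. The only minor point requiring a touch of care is handling the regime $k < \eps_0$ separately so that Proposition~\ref{sec2-prop} is applied only where its bounds are meaningful, and matching the conclusions across the two regimes via the identity $k + k^{C_{\PFR}} = k(1+k^{C_{\PFR}-1})$.
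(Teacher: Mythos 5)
Your proposal is correct and follows precisely the route the paper intends: the paper explicitly states that Proposition~\ref{combined} is obtained by ``a minor modification of the first part of the above proof, using the quantity $C_{\PFR}$ from the introduction in place of Statement 1,'' and you have faithfully carried out that modification, including the initial reduction to $k \geq \eps_0$ via Proposition~\ref{sec3-prop}, the passage through Proposition~\ref{sec2-prop} to a set $S$ of doubling $e^{O(k)}$, the covering of $S$ by $\exp(O(k^{C_{\PFR}}))$ cosets of a subgroup $H$ with $|H| \leq |S|$, the bipartite entropy estimate $d_\ent(U_S, U_H) \ll k^{C_{\PFR}}$, and the final triangle inequality combined with the identity $k + k^{C_{\PFR}} = k(1 + k^{C_{\PFR}-1})$.
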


\section{Dimension and the weak PFR conjecture}\label{sec4}

We now prove Theorem~\ref{manners-thm-new} (and hence Corollary~\ref{pfr-weak-pfr}). The proof is along somewhat similar lines to the proof of Theorem~\ref{manners-thm} given in Section~\ref{sec-manners}, but more involved. An important ingredient will be the following lemma.

Throughout this section, $C$ will be the constant in Proposition~\ref{combined} (but the precise nature of this constant is not important).

\begin{lemma}\label{iterative-small}
Suppose that $X$ and $Y$ are $\F_2^D$-valued random variables. Then there is a subgroup $H \leq \F_2^D$ such that, denoting by $\psi \colon \F_2^D \rightarrow \F_2^D/H$ the natural projection, and setting $k \coloneqq d_{\ent}(\psi(X), \psi(Y))$, we have
\begin{equation}\label{logxyh} \log |H| \leq 2(\H(X) + \H(Y))\end{equation}
and
\begin{equation}\label{entsum-up} \H(\psi(X)) + \H(\psi(Y)) \leq 8C k (1 + k^{C_{\PFR} - 1}).\end{equation}
\end{lemma}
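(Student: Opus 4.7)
The plan is to construct $H$ via an iteration that successively applies Proposition~\ref{combined}. Take $H_0 := \{0\}$, and at step $i$ let $\psi_i \colon \F_2^D \to \F_2^D/H_i$ be the quotient, and write $k_i := d_\ent(\psi_i(X), \psi_i(Y))$ and $m_i := \H(\psi_i(X)) + \H(\psi_i(Y))$. Halt with $H := H_i$ if $m_i \leq 8C k_i(1+k_i^{C_\PFR - 1})$. Otherwise, apply Proposition~\ref{combined} to $\psi_i(X), \psi_i(Y)$ inside $\F_2^D/H_i$ to obtain a subgroup $H_{i+1}/H_i$ with $d_\ent(\psi_i(X), U_{H_{i+1}/H_i}) \leq C k_i(1+k_i^{C_\PFR - 1})$, and iterate. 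When the iteration halts, the halting condition is exactly~\eqref{entsum-up}, so only~\eqref{logxyh} needs to be verified.

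The driving computation uses the identity $d_\ent(Z, U_K) = \H(\pi_K(Z)) + \tfrac{1}{2}(\log|K| - \H(Z))$, as in the derivation of~\eqref{ento}. Applying this to both $\psi_i(X)$ and $\psi_i(Y)$ (for the latter using the triangle inequality~\eqref{triangle}, which costs an extra factor of $2$ in the Proposition~\ref{combined} bound) and summing yields
\[
  m_{i+1} + \log|H_{i+1}/H_i| - \tfrac{1}{2} m_i \leq 3C k_i(1+k_i^{C_\PFR - 1}).
\]
When the halting condition fails, the right-hand side is strictly less than $\tfrac{3}{8} m_i$, giving the contracting recursion $m_{i+1} + \log|H_{i+1}/H_i| \leq \tfrac{7}{8} m_i$. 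Telescoping across the non-halting steps and using $m_j \geq 0$ yields $\log|H_i| \leq \tfrac{7}{8} m_0 \leq 2(\H(X) + \H(Y))$, which is~\eqref{logxyh}.

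To finish, the iteration must terminate. If the halting condition fails at step $i$, the subgroup $H_{i+1}/H_i$ must be nontrivial: otherwise $U_{H_{i+1}/H_i} = 0$ and the identity together with the triangle inequality would force $\H(\psi_i(X)) \leq 2C k_i(1+k_i^{C_\PFR - 1})$ and $\H(\psi_i(Y)) \leq 4C k_i(1+k_i^{C_\PFR - 1})$, so $m_i \leq 6C k_i(1+k_i^{C_\PFR - 1})$, contradicting the negation of the halting condition. Hence $|H_{i+1}| \geq 2 |H_i|$ at every non-halting step, and the iteration must halt within $D$ steps. The main obstacle is the bookkeeping: the factor $8C$ in the halting threshold is dictated by the requirement that the resulting contraction constant $\tfrac{7}{8}$ in the telescoping lie safely below the factor $2$ on the right of~\eqref{logxyh}, and the corresponding elimination of the trivial-subgroup case during termination.
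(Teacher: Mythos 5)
Your proposal is correct and runs essentially the same iteration as the paper: repeatedly apply Proposition~\ref{combined} modulo the current subgroup, halt when $m_i \leq 8Ck_i(1+k_i^{C_{\PFR}-1})$, and argue termination by showing each quotient $H_{i+1}/H_i$ is nontrivial. The only difference is cosmetic bookkeeping: you track the combined recursion $m_{i+1} + \log|H_{i+1}/H_i| \leq \frac{7}{8}m_i$ in one shot, while the paper isolates the one-step bounds $\log|H_{i+1}/H_i| \leq m_i$ and $m_{i+1} \leq \frac{1}{2}m_i$ in a sub-lemma and telescopes them separately; your version even gives the marginally sharper $\log|H| \leq \frac{7}{8}(\H(X)+\H(Y))$.
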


We isolate the following (sub-) lemma from the proof.

\begin{lemma}%
  \label{lem54} Let $n\in \N$. Let $X, Y$ be $\F_2^n$-valued random variables.
  Set $k \coloneqq d_{\ent}(X, Y)$, and suppose that
  \begin{equation}%
    \label{eq:lem54-hypo}
    \H(X) + \H(Y) > 8Ck (1 + k^{C_{\PFR} - 1}).
  \end{equation}
  Then there is a nontrivial subgroup $H \leq \F_2^n$ such that
  \begin{equation}%
    \label{lem54-i}
    \log |H| \leq \H(X) + \H(Y)
  \end{equation}
  and \textup{(}writing $\psi \colon \F_2^n \to \F_2^n / H$ as above\textup{)} 
  \begin{equation}%
    \label{lem54-ii}
    \H(\psi(X)) + \H(\psi(Y)) \leq \frac{1}{2} \bigl(\H(X) + \H(Y)\bigr).
  \end{equation}
\end{lemma}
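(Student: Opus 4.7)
The plan is to obtain the subgroup $H$ directly from Proposition~\ref{combined} and then verify (i), (ii), and nontriviality using the standard entropy identities that link $d_\ent(Z, U_H)$ to $\H(\psi(Z))$ and $\log|H|$.

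Writing $\eta := Ck(1 + k^{C_\PFR - 1})$, the first step is to apply Proposition~\ref{combined} to the pair $(X, Y)$ to obtain a subgroup $H \leq \F_2^n$ with $d_\ent(X, U_H) \leq \eta$. A careful look at the proof of Proposition~\ref{combined} (which passes through Proposition~\ref{pfr-f2-prop}) reveals that the same $H$ also satisfies $d_\ent(Y, U_H) \leq \eta$: the subgroup is produced by applying the combinatorial $C_\PFR$-PFR hypothesis to a Freiman-type set $S$ obtained from Proposition~\ref{sec2-prop}, and by item~(i) after that proposition the set $S$ is close in Ruzsa distance to \emph{both} $X$ and $Y$, so $H$ is naturally symmetric in the two inputs. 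This two-sided bound is essential for what follows.

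Granted $d_\ent(X, U_H), d_\ent(Y, U_H) \leq \eta$, conclusion (ii) follows quickly. From the identity $d_\ent(Z, U_H) = \H(\psi(Z)) + \tfrac12(\log|H| - \H(Z))$ combined with the obvious $\H(Z) \leq \H(\psi(Z)) + \log|H|$, one obtains $\H(\psi(Z)) \leq 2 d_\ent(Z, U_H)$ for both $Z = X$ and $Z = Y$. Summing gives $\H(\psi(X)) + \H(\psi(Y)) \leq 4\eta$, which is strictly less than $\tfrac12(\H(X) + \H(Y))$ by the hypothesis~\eqref{eq:lem54-hypo}. For (i) and nontriviality, use $|\H(Z) - \log|H|| \leq 2 d_\ent(Z, U_H) \leq 2\eta$ for each of $Z = X, Y$. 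Taking $\H(X) \leq \H(Y)$ without loss of generality, the hypothesis forces $\H(Y) > 4\eta$; then $\log|H| \leq \H(X) + 2\eta \leq \H(X) + \H(Y)$ gives (i), and $\log|H| \geq \H(Y) - 2\eta > 2\eta > 0$ gives nontriviality.

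The main obstacle is justifying the symmetric bound $d_\ent(Y, U_H) \leq \eta$, since the stated version of Proposition~\ref{combined} only controls $d_\ent(X, U_H)$. Relying solely on the triangle inequality gives $d_\ent(Y, U_H) \leq k + \eta \leq 2\eta$ and hence $\H(\psi(X)) + \H(\psi(Y)) \leq 6\eta$, which narrowly fails to beat $\tfrac12(\H(X) + \H(Y))$ under the stated hypothesis $> 8\eta$. The resolution is to extract the symmetric statement from the proof of Proposition~\ref{combined} as sketched above, possibly absorbing an extra absolute constant factor into the $C$ appearing in the hypothesis.
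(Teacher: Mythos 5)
Your proposal is correct and follows essentially the same route as the paper: apply Proposition~\ref{combined} to $(X,Y)$ to get a subgroup $H$, then use~\eqref{ent-proj-dist} for (ii) and~\eqref{ent-lower} for (i), with nontriviality as a consequence. The issue you raise is real, and you have diagnosed it accurately: as stated, Proposition~\ref{combined} only asserts $d_\ent(X, U_H) \leq Ck(1 + k^{C_\PFR-1})$, yet the paper's own proof of this lemma quietly reads off the two-sided bound $d_\ent(X, U_H), d_\ent(Y, U_H) \leq Ck(1+k^{C_\PFR-1})$ from it. You are also right that the bare triangle inequality $d_\ent(Y,U_H) \leq k + Ck(1+k^{C_\PFR-1})$ costs just enough to break the arithmetic against the threshold $8Ck(1+k^{C_\PFR-1})$, and that the correct repair is either to observe that the proof of Proposition~\ref{combined} (going through the Freiman-type set $S$ of Proposition~\ref{sec2-prop}, which is close to both $X$ and $Y$) actually yields the symmetric conclusion, or equivalently to absorb the lost additive constant into $C$. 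Either way the constants line up. The one cosmetic difference from the paper is the nontriviality step: the paper deduces it by contradiction directly from (ii) (if $H$ were trivial then $\psi$ is the identity and (ii) forces $\H(X)+\H(Y)=0$), which is slightly cleaner than your computation via $\log|H| \geq \H(Y) - 2\eta > 0$, though both are fine.
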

\begin{proof}
Set $k \coloneqq d_{\ent}(X, Y)$. Applying Proposition~\ref{combined}, we obtain a subgroup $H$ such that $d_\ent(X, U_H), d_\ent(Y, U_H) \leq Ck (1 + k^{C_{\PFR} - 1})$. By~\eqref{ent-proj-dist} and~\eqref{eq:lem54-hypo}, it follows that
\[ \H(\psi(X)) + \H(\psi(Y)) \leq 4C k (1 + k^{C_{\PFR} - 1}) < \frac{1}{2}(\H(X) + \H(Y)),\] which is~\eqref{lem54-ii}. To prove~\eqref{lem54-i}, an application of~\eqref{ent-lower} yields
\[ \log |H| - \H(X) \leq 2 d_{\ent}(X, U_H) \leq 2 Ck (1 + k^{C_{\PFR} - 1}),\] and similarly for $Y$. Therefore using~\eqref{eq:lem54-hypo} we have
\[ \log |H| \leq \frac{1}{2}(\H(X) + \H(Y)) + 2 C k (1 + k^{C_{\PFR} - 1}) <  \H(X) + \H(Y),\] which gives the required bound~\eqref{lem54-i}.

If $H$ were trivial we would have $\psi(X) = X$, $\psi(Y) = Y$ and so~\eqref{lem54-ii} would imply $\H(X) + \H(Y) = 0$, which then contradicts~\eqref{eq:lem54-hypo}.
\end{proof}

\begin{proof}[Proof of Lemma~\ref{iterative-small}]
We iteratively define a sequence $\{0\} = H_0 < H_1 < \cdots $ of subgroups of $\F_2^D$. Denote by $\psi_i : \F_2^D \rightarrow \F_2^D/H_i$ the $i$th associated projection operator, and set $k_i \coloneqq d_{\ent}(\psi_i(X), \psi_i(Y))$. We stop the iteration at the $i$th stage if we have 
\begin{equation}\label{stop-criterion} \H(\psi_i(X)) + \H(\psi_i(Y)) \leq 8C k_i (1 + k_i^{C_{\PFR} - 1}).\end{equation} Otherwise, we apply Lemma~\ref{lem54} to $\psi_i(X), \psi_i(Y)$, obtaining a nontrivial subgroup $H_{i+1}/H_i \leq \F_2^D/H_i$ such that 
\begin{equation}\label{eq47} \log \frac{|H_{i+1}|}{|H_i|} \leq \H(\psi_i(X)) + \H(\psi_i(Y))\end{equation} and
\begin{equation}\label{eq48} \H(\psi_{i+1}(X)) + \H(\psi_{i+1}(Y)) \leq \frac{1}{2}\bigl(\H(\psi_i(X)) + \H(\psi_i(Y))\bigr).\end{equation}
Clearly from iterated application of~\eqref{eq48} we obtain \[ \H(\psi_i(X)) + \H(\psi_i(Y)) \leq 2^{-i} (\H(X) + \H(Y)).\] Then, from a telescoping application of~\eqref{eq47} we get 
\begin{equation}\label{h-bd} \log |H_i| \leq 2 (\H(X) + \H(Y)).\end{equation}
Since the groups $H_i$ form a strictly increasing sequence, the iteration does terminate at some time $i$. At this time we have both~\eqref{stop-criterion} and~\eqref{h-bd} and so, setting $\psi = \psi_i$, the proof of Lemma~\ref{iterative-small} is concluded.\end{proof}

Now we turn our attention to Theorem~\ref{manners-thm-new}. It is a consequence of the following bipartite statement, which should be compared to Theorem~\ref{bil-manners-thm}.

\begin{theorem}\label{thm83}
  There are absolute constants $C_1, C_2$ such that, setting $f(t) \coloneqq  C_1 t (1 + t^{1 - 1/C_{\PFR}})$, the following is true.
  Let $D \in \N$, and suppose $A, B \subseteq \Z^D$ are finite non-empty sets, and set $k \coloneqq d_{\ent}(U_A, U_B)$.
  Then there exist nonempty $A' \subseteq A$, $B' \subseteq B$ with 
  \[
    \log \frac{|A|}{|A'|} + \log \frac{|B|}{|B'|} \leq f(k)
  \]
  and such that $\dim A', \dim B' \leq C_2 k$.
\end{theorem}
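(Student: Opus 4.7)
The plan is to mirror the proof of Theorem~\ref{bil-manners-thm}, inducting on $|A|+|B|$, replacing the appeal to Proposition~\ref{sec3-prop} by the stronger Lemma~\ref{iterative-small}. As in Theorem~\ref{bil-manners-thm}, I may assume (after possibly translating $B$ and passing to a sublattice) that $A\cup B$ does not sit in a coset of a proper subgroup of $\Z^D$, which ensures strict progress $|A_x|+|B_y|<|A|+|B|$ when we fibre under any homomorphism with proper kernel. Write $\phi\colon\Z^D\to\F_2^D$ for reduction mod $2$, $k=d_\ent(U_A,U_B)$, and $\bar k=d_\ent(\phi(U_A),\phi(U_B))\leq k$.

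The key novelty is a case split calibrated to the shape of $f(t)=C_1 t(1+t^{1-1/C_{\PFR}})$, whose derivative grows like $k^{1-1/C_{\PFR}}$. Fix a threshold $\delta>0$ (to be tuned at the end). In \textbf{Case 1} ($\bar k\geq\delta k^{1/C_{\PFR}}$) I apply Lemma~\ref{lem:hom-lemma} directly to $\phi$. By Lemma~\ref{lem:uneven-mod-2} the mass $M=\H(\phi(U_A))+\H(\phi(U_B))$ is at most $20k$, so we obtain $x,y\in\F_2^D$ and fibres $A_x=A\cap\phi^{-1}(x)$, $B_y=B\cap\phi^{-1}(y)$ with $|A_x|+|B_y|<|A|+|B|$ and, writing $k'=d_\ent(U_{A_x},U_{B_y})$,
\[
\log\frac{|A||B|}{|A_x||B_y|}\leq \frac{M}{\bar k}(k-k')\leq \frac{20}{\delta}\,k^{1-1/C_{\PFR}}(k-k').
\]

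In \textbf{Case 2} ($\bar k<\delta k^{1/C_{\PFR}}$) I apply Lemma~\ref{iterative-small} to $\phi(U_A),\phi(U_B)$, obtaining a subgroup $H\leq\F_2^D$ with $\log|H|\leq 2(\H(\phi(U_A))+\H(\phi(U_B)))\leq 40k$ and, writing $\psi\colon\F_2^D\to\F_2^D/H$, $\tilde k=d_\ent(\psi\phi(U_A),\psi\phi(U_B))\leq\bar k$ and $\tilde M=\H(\psi\phi(U_A))+\H(\psi\phi(U_B))\leq 8C\tilde k(1+\tilde k^{C_{\PFR}-1})$. If $H=\F_2^D$ then $D\leq 40k/\log 2$ and I take $A'=A$, $B'=B$, securing the theorem with $C_2=40/\log 2$, which matches the constant advertised in Theorem~\ref{manners-thm-new}. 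Otherwise $\phi^{-1}(H)$ is proper in $\Z^D$, so the WLOG forces $\psi\phi$ to be non-constant on at least one of $A,B$; Lemma~\ref{lem:hom-lemma} applied to $\psi\phi$ then yields fibres with $|A_x|+|B_y|<|A|+|B|$ and
\[
\log\frac{|A||B|}{|A_x||B_y|}\leq\frac{\tilde M}{\tilde k}(k-k')\leq 8C\bigl(1+\delta^{C_{\PFR}-1}k^{1-1/C_{\PFR}}\bigr)(k-k'),
\]
using $\tilde k\leq\bar k<\delta k^{1/C_{\PFR}}$ to pass to the final inequality.

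In each recursive branch, the inductive hypothesis produces $A'\subseteq A_x$, $B'\subseteq B_y$ with $\log\frac{|A_x||B_y|}{|A'||B'|}\leq f(k')$ and $\dim A',\dim B'\leq C_2 k'\leq C_2 k$, so it remains to verify $f(k')+(\text{fibre cost})\leq f(k)$. Writing $f(k)-f(k')=C_1(k-k')+C_1(k^{2-1/C_{\PFR}}-(k')^{2-1/C_{\PFR}})$, I would bound the second term below by $c\,k^{1-1/C_{\PFR}}(k-k')$ for some $c=c(C_{\PFR})>0$ (splitting into $k'\leq k/2$, handled by a direct estimate, and $k/2<k'\leq k$, handled by the mean value theorem) to obtain
\[
f(k)-f(k')\geq C_1(k-k')+C_1 c\,k^{1-1/C_{\PFR}}(k-k'),
\]
which absorbs both fibre costs provided $C_1\geq\max\bigl(8C,\,8C\delta^{C_{\PFR}-1}/c,\,20/(c\delta)\bigr)$; the last two are balanced by choosing $\delta$ of order $C^{-1/C_{\PFR}}$, yielding an admissible $C_1$ depending only on $C_{\PFR}$. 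The main obstacle is precisely this constant bookkeeping: the threshold $k^{1/C_{\PFR}}$ separating the two cases is dictated by the requirement that the $\tilde k^{C_{\PFR}-1}$ factor coming from Lemma~\ref{iterative-small} and the $M/\bar k$ factor coming from the direct fibre argument both match the order $k^{1-1/C_{\PFR}}$ permitted by $f$, and this alignment is what delivers the improved exponent $1-1/C_{\PFR}$ over the exponent $1$ seen in Theorem~\ref{bil-manners-thm}.
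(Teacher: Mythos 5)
Your argument is correct, but it takes a genuinely different route from the paper's. The paper's proof of Theorem~\ref{thm83} does \emph{not} split into cases according to the size of $\bar k = d_\ent(\phi(U_A),\phi(U_B))$: it always invokes Lemma~\ref{iterative-small}, obtaining the two bounds $\tilde M \leq 8Cd(1+d^{C_{\PFR}-1})$ and $\tilde M \leq 20k$ on the fibre mass $\tilde M := \H(\tilde\phi(U_A))+\H(\tilde\phi(U_B))$, and then \emph{interpolates} between them (taking the first to the power $\gamma=1/C_{\PFR}$ and the second to the power $1-\gamma$) to obtain the single clean estimate $\tilde M \leq 20C(1+k^{1-\gamma})d$. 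This replaces your entire two-case dichotomy calibrated at $\bar k \gtrless \delta k^{1/C_{\PFR}}$; your split is in effect a hands-on substitute for that geometric-mean trick, with the direct application of Lemma~\ref{lem:hom-lemma} to $\phi$ doing the work of the $(\cdot)^{1-\gamma}$ factor and Lemma~\ref{iterative-small} the $(\cdot)^\gamma$ factor. Structurally your argument hews more closely to the proof of Theorem~\ref{bil-manners-thm} (where a genuine case split on $\eps$ is used), whereas the paper's proof of Theorem~\ref{thm83} is tighter precisely because it merges the two cases. A second, minor difference is in the final bookkeeping: the paper uses the one-line inequality $f(k') \leq C_1 k'(1+k^{1-\gamma}) = f(k) - C_1(k-k')(1+k^{1-\gamma})$, which gives exactly the coefficient $1+k^{1-\gamma}$ with no loss, whereas your lower bound $f(k)-f(k')\geq C_1(k-k')+C_1 c\,k^{1-\gamma}(k-k')$ (proved by splitting at $k'=k/2$) costs you a factor $c=\tfrac12$ that must be recouped by enlarging $C_1$. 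Both routes deliver the exponent $1-1/C_{\PFR}$ and an admissible absolute $C_1$; the interpolation version is cleaner, but your case split is a legitimate alternative. You should add a sentence disposing of the degenerate situation $k=0$ (so that $\bar k>0$ in Case~1 and the division by $\bar k$ in Lemma~\ref{lem:hom-lemma} is safe), and note, as the paper does, that the WLOG gives $\tilde M>0$ and hence $\tilde k>0$ in Case~2.
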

\begin{proof}
  We will proceed by induction on $|A| + |B|$.
  We may also assume that $A, B$ do not sit inside cosets of a proper subgroup of $\Z^D$, else we may replace $\Z^D$ by that subgroup.

  Let $\phi : \Z^D \rightarrow \F_2^D$ be the natural homomorphism.
  By Lemma~\ref{lem:uneven-mod-2} we have
  \begin{equation}%
    \label{ent-here}
    \H(\phi(U_A)), \H(\phi(U_B)) \leq 10 k.
  \end{equation}

  Applying Lemma~\ref{iterative-small} to $\phi(U_A), \phi(U_B)$, we find a subgroup $H \leq \F_2^D$ and associated projection $\psi : \F_2^D \rightarrow \F_2^D /H$ such that, denoting by $\tilde\phi = \psi \circ \phi : \Z^D \rightarrow \F_2^D / H$ the natural (composite) projection, we have 
  \begin{equation}%
    \label{h-ups-2}
    \log |H| \leq 2 (\H(\phi(U_A)) + \H(\phi(U_B))) \leq 40k
  \end{equation}
  and
  \begin{equation}%
    \label{dist-proj-2}
    \H(\tilde\phi(U_A)) + \H(\tilde \phi(U_B)) \leq 8C d \bigl(1 + d^{C_{\PFR} - 1}\bigr)
  \end{equation}
  where 
  \begin{equation}%
    \label{d-def}
    d \coloneqq d_{\ent}\bigl(\tilde\phi(U_A), \tilde\phi(U_B)\bigr).
  \end{equation}
  Now by~\eqref{ent-here},~\eqref{h-ups} we also have
  \begin{equation}%
    \label{hab}
    \H(\tilde\phi(U_A)) + \H(\tilde \phi(U_B)) \leq 20k.
  \end{equation}

  In the following, set $\gamma \coloneqq 1/C_{\PFR}$ for convenience. If $d \geq 1$ then taking~\eqref{dist-proj-2} to the power $\gamma$ times~\eqref{hab} to the power $1 - \gamma$ gives
  \[
    \H(\tilde\phi(U_A)) + \H(\tilde \phi(U_B)) \leq 20 C k^{1 - \gamma} d.
  \]
  If $d \leq 1$ then the right-hand side of~\eqref{dist-proj-2} is $\leq 16C d$.
  Thus in all cases we have
  \begin{equation}%
    \label{habby}
    \H(\tilde\phi(U_A)) + \H(\tilde \phi(U_B)) \leq 20 C (1 + k^{1 - \gamma}) d.
  \end{equation}

  Now if $H$ is all of $\F_2^D$ then it follows from~\eqref{h-ups-2} (taking $C_2 = 40 / \log 2)$ that $D \leq C_2 k$, and so Theorem~\ref{thm83} is true simply by taking $A' = A$, $B' = B$. 

  Suppose, then, that $H$ is not all of $\F_2^D$.
  For $x, y \in \F_2^{D}/H$, denote by $A_x \coloneqq A \cap \tilde\phi^{-1}(x)$ and $B_y \coloneqq B \cap \tilde\phi^{-1}(y)$ the fibres of $A, B$ above $x, y$ respectively. Since we are assuming that $A, B$ do not sit inside cosets of a proper subgroup of $\Z^D$,  we may assume that at least one of $\tilde\phi(A), \tilde\phi(B)$ is not a singleton, and so
  \[
    |A_x| + |B_y| < |A| + |B|
  \]
  and $\H(\tilde\phi(U_A)) + \H(\tilde\phi(U_B)) > 0$, whereby $d>0$ by~\eqref{dist-proj-2}.
  Applying Lemma~\ref{lem:hom-lemma} once again, and noting~\eqref{d-def} and~\eqref{habby}, we find $x,y \in \F_2^D/H$ such that
  \begin{equation}%
    \label{eq:another-hom-bound}
    \log \frac{|A|}{|A_x|} + \frac{|B|}{|B_y|} 
    \leq 20 C (1 + k^{1-\gamma}) \bigl(k - d_\ent(U_{A_x}, U_{B_y}) \bigr)
  \end{equation}
  Set $k' = d_\ent(U_{A_x}, U_{B_y})$.
  By induction on $A_x$, $B_y$ we may find $A' \subseteq A_x$ and $B' \subseteq B_y$ such that $\dim A',\dim B' \leq C_2 k' \leq C_2 k$ and
  \[
    \log \frac{|A_x|}{|A'|} + \log \frac{|B_y|}{|B'|} \leq f(k').
  \]
  Adding this to~\eqref{eq:another-hom-bound} yields
  \begin{equation}\label{aaprime} \log \frac{|A|}{|A'|} + \log \frac{|B|}{|B'|} \leq f(k') + 20 C (1 + k^{1-\gamma}) (k - k') . \end{equation} 
  However, 
  \begin{align*}
  f(k') & = C_1 k' (1 + (k')^{1 - \gamma}) \\ & \leq C_1 k' (1 + k^{1 - \gamma}) \\ & = f(k) - C_1 (k - k')(1 + k^{1 - \gamma}).
   \end{align*}
  This, provided $C_1 \geq 20 C$, the right-hand side of~\eqref{aaprime} is at most $f(k)$, and this closes the induction. The proof is complete. 
\end{proof}

\appendix

\section{Basic facts about entropy}\label{basic-entropy-facts}

In this section we gather together basic facts about entropy, referring the reader to other sources (e.g.,~\cite[Appendix A]{tao-entropy} or~\cite{gray}) for the (standard, and mostly easy) proofs.

We begin with the most basic results. 

\subsubsection*{Basic entropy results}

If $X$ is an $S$-valued random variable for some finite $S$, the Shannon entropy is defined as
\[ \H(X) \coloneqq \sum_x p_X(x) \log \frac{1}{p_X(x)},\]
where $x$ is understood to range over $S$ and\footnote{We use the natural logarithm in this paper, but one could easily work with other bases of the logarithm if desired.} we adopt the convention that any term involving a factor of $p_X(x)$ vanishes when $p_X(x)=0$.  From Jensen's inequality we have
\begin{equation}\label{convexity-bound} \H(X) \leq \log |S|.\end{equation}
Also,
\[ \H(X) = \sum_x p_X(x) \log \frac{1}{p_X(x)} \geq \min_{x : p_X(x)>0} \log \frac{1}{p_X(x)},\] and therefore
\begin{equation}\label{px-lower} \max_x p_X(x) \geq e^{-\H(X)}.\end{equation}

If $X, Y$ are random variables then 
\begin{equation}\label{union} \H(X, Y) \leq \H(X) + \H(Y),\end{equation} 
and equality occurs if $X, Y$ are independent. At the other end of the spectrum, if $X$ determines $Y$ then $\H(X, Y) = \H(X)$. See for instance~\cite[Lemma 2.3.2]{gray}.

\subsubsection*{Conditional entropy}
We define
\[ \H(X | Y) = \sum_y p_Y(y) \H(X | Y = y).\] Then we have the \emph{chain rule}
\[ \H(X, Y) = \H(X | Y) + \H(Y).\] If $Y = f(X)$ for some function $f$ then, since $\H(X, Y) = \H(X)$, it follows that
\begin{equation}\label{h-ups} \H(f(X)) \leq \H(X).\end{equation}

\subsubsection*{Submodularity} For any three random variables $A, B, C$ we have the submodularity inequality
\begin{equation}\label{submodularity} \H(A, B, C) + \H(C) \leq \H(A, C) + \H(B,C)\end{equation}
(which is equivalent to the non-negativity of the conditional mutual information $\mathbf{I}(A:B|C)$); see for instance~\cite[Lemma 2.5.5]{gray}.

An equivalent and useful way to write the submodularity inequality is 
\begin{equation}\label{submodularity-2} \H(A | B, C) \leq \H(A | C).\end{equation}
Note also that, if $B$ determines $C$, then $\H(A, B, C) = \H(A, B)$ and $\H(B, C) = \H(B)$, and submodularity implies that 
\begin{equation}\label{submodularity-3} \H(A | B) \leq \H(A | C).\end{equation}

\subsubsection*{Kullback--Leibler Divergence.}
Suppose that $X, Y$ are random variables with distribution functions $\mu_X, \mu_Y$ respectively. Then we define 
\[ D_{\KL}(X \Vert Y) \coloneqq  \sum_t \mu_X(t) \log \left(\frac{\mu_X(t)}{\mu_Y(t)}\right).\]
It is conventional to define the summand here to be $0$ if $\mu_X(t) = 0$ and $\infty$ if $\mu_Y(t) = 0$ but $\mu_X(t) \neq 0$; in practice, we will avoid the latter situation.

It is convenient to relate this to the \emph{cross-entropy} 
\begin{equation}\label{cross-ent-def} \H( X : Y) \coloneqq  \sum_t \mu_X(t) \log \frac1{\mu_Y(t)}\end{equation} (where the same conventions are in force). Thus
\begin{equation}\label{kl-cross} D_{\KL}(X \Vert Y) = \H(X : Y) - \H(X).\end{equation}
In particular, if $X$ takes values in a finite set $S$, then $\H(X : U_S) = \log |S|$ and thus
\begin{equation}\label{dkus}
 D_\KL(X \Vert U_S) = \log |S| - \H(X).
\end{equation}
Note that $\H(X : Y)$ is \emph{not} at all the same thing as $\H(X, Y)$ (or $\H(X|Y)$). Indeed, the former depends only on the distribution functions of $X, Y$ and not in any way on their dependence, and it is also asymmetric in that in general $\H(X : Y) \neq \H(Y : X)$.
From a standard application of Jensen's inequality we obtain \emph{Gibbs' inequality}
\begin{equation}\label{gibbs} D_{\KL}(X \Vert Y) \geq 0\end{equation}
(see e.g.,~\cite[Theorem 2.3.1]{gray});  
we also have the well known \emph{Pinsker's inequality}
\begin{equation}\label{pinsker-eq} \sum_t |p_X(t) - p_Y(t)| \leq \sqrt{2 D_{\KL}(X \Vert Y)},\end{equation}
see e.g.,~\cite[Lemma 5.2.8]{gray}. 

Now we turn to some simple results about $G$-valued random variables, where $G$ is abelian, and we assume all random variables to have finite support. The reader may wish to recall the definitions of $d_{\ent}$ and $d^*_{\ent}$, given at~\eqref{dent-def} and~\eqref{max-dist} respectively.

First, if $X, Y$ are independent such variables then

\begin{equation}\label{simple-sumset} \H(X - Y) \geq \H(X-Y|Y) = \H(X).\end{equation}
From this we see that
\begin{equation}\label{ent-lower}
d_\ent(X,Y)=d_\ent(Y,X) \geq \frac{|\H(X)-\H(Y)|}{2} \geq 0.
\end{equation}

Let $X$ be a $G$-valued random variable, and let $H$ be a finite subgroup of $G$. Denote by $\pi : G \rightarrow G/H$ the quotient map.
Let $U_H$ be a uniform random variable on $H$, independent of $X$. Then we have
\begin{equation}\label{hxuh} \H(X + U_H) = \H(\pi(X)) + \H(U_H) = \H(\pi(X)) + \log |H|.\end{equation}
It follows that 
\begin{equation}\label{dist-proj} d_\ent(X, U_H) = \H(\pi(X)) + \frac{1}{2} (\log |H| - \H(X)).\end{equation}
From this and~\eqref{ent-lower} we have
\begin{equation}\label{ent-proj-dist} \H(\pi(X)) \leq 2 d_{\ent}(X, U_H).\end{equation}
Also, from Lemma~\ref{improv-ruzsa} and $d_\ent(U_H,U_H)=0$ we observe that
\[ d_\ent^*(X,U_H) =d_\ent(X,U_H).\]

Finally, if $X,Y,Z$ are independent $G$-valued random variables, we observe from the Gibbs inequality~\eqref{gibbs} the useful bound
\begin{align}\nonumber
  \H(Z - Y) & - \H(Y) \leq \H(Z - Y : X ) - \H(Y) \\ \nonumber
                    & = \sum_z p_Z(z) \bigl( \H(z - Y : X ) - \H(z - Y)\bigr)\\
                    & = \sum_z p_Z(z) D_{\KL}(z - Y \Vert X ) \label{hzy}
\end{align}
where we have used the permutation-invariance of Shannon entropy to observe that $\H(z-Y)=\H(Y)$, as well as the fact that $p_{Z-Y}(t) = \sum_z p_Z(z) p_{z - Y}(t)$.  Note that we in fact have equality when $X=Z-Y$.  

\section{Energy, entropy and doubling}%
\label{app:energy-entropy}

In this section we prove the inequalities~\eqref{structure-ineqs}. Recall the statement, which is that
\begin{equation}\label{appB-ineq-repeat}  \frac{|A|^3}{\mathrm{E}[A]} \leq \sigma_{\ent}[A] \leq \sigma[A].\end{equation}

\begin{proof}  Denote $X \coloneqq U_A+U'_A$ to be the sum of two independent uniform random variables on $A$. The right-hand inequality is immediate from the inequality $\H(X) \leq \log |A+A|$, which is a special case of Jensen's inequality.  As for the left-hand inequality, observe that
\[ p_X(x) \coloneqq  \frac{|A \cap (x - A)|}{|A|^2}.\] 
and then by the weighted AM--GM inequality, 
\[ e^{-\H(X)} = \prod_x p_X(x)^{p_X(x)} \leq \sum_x p_X(x)^2 = \frac{\mathrm{E}[A]}{|A|^4}.\] The result follows immediately.\end{proof}

The above argument can be reformulated in terms of the \emph{R\'enyi entropies} $\H_\alpha(X)$, defined for $\alpha \neq 1$ by
\[ \H_{\alpha}(X) \coloneqq  \frac{1}{1 - \alpha} \log \left(\sum_x p_X(x)^{\alpha}\right)\] 
and extended by continuity to $\alpha=1$ by setting $\H_1(X) \coloneqq \H(X)$.  A brief calculation reveals the identities
\begin{align*}
\exp(\H_0(X)) &=  |A + A|\\
\exp(\H_1(X)) &= \sigma_{\ent}[A] |A| \\
\exp(\H_2(X)) &=  \frac{|A|^4}{\mathrm{E}[A]},
\end{align*}
and the claim now follows from the well-known fact that the R\'enyi entropy $\H_\alpha(X)$ is non-increasing in $\alpha$.

We conclude with a simple example showing that both inequalities in~\eqref{appB-ineq-repeat} can be far from tight. Suppose that $n = 2m$ is even and $A = H \cup \{x_1,\dots, x_m\}$, with $H$ a subgroup of size $m$ and $x_1,\dots, x_m$ highly dissociated with respect to $H$, for instance with $x_i + x_j - x_k - x_l \in H$ only if $\{i,j\} = \{k,l\}$. Then we have $|A|^3/\mathrm{E}[A] = 8 + o(1)$ as $n \rightarrow \infty$. Turning to $\sigma_{\ent}[A]$, we of course have $\H(U_A) = \log n$. The variable $U_A + U'_A$ may be conditioned to subvariables which are, respectively, uniformly distributed on $H$, on the set $\bigcup_{i = 1}^m (x_i + H)$, and on the multiset $\bigcup_{i,j = 1}^m \{x_i + x_j\}$, with the conditioning probabilities being $\frac{1}{4}, \frac{1}{2},\frac{1}{4}$. One therefore computes that $\H(U_A + U'_A) = (\frac{7}{4} + o(1)) \log n$ and so $\sigma_{\ent}[A] = n^{3/4 + o(1)}$. Finally, $\sigma[A] = (\frac{3}{4} + o(1))n$.


\begin{thebibliography}{99}

\bibitem{avez} A.~Avez, \emph{Entropie des groupes de type fini,}
C. R. Acad. Sci. Paris S\'er. A-B \textbf{275} (1972), A1363--A1366.

\bibitem{bourgain-chang}
J. Bourgain and M.-C. Chang, \emph{On the size of $k$-fold sum and product sets of integers}, J. Amer. Math. Soc. \textbf{17} (2004), no. 2, 473--497.

\bibitem{bv} E.~Breuillard and P.~Varj\'u, \emph{Entropy of Bernoulli convolutions and uniform exponential growth for linear groups,} J. Anal. Math. \textbf{140} (2020), no. 2, 443--481.


\bibitem{mcc} M.-~C.~Chang, \emph{Some consequences of the polynomial Freiman-Ruzsa conjecture,} C. R. Math. Acad. Sci. Paris \textbf{347} (2009), (11--12):583–588.




\bibitem{freiman} G.~A.~Freiman, \emph{Number-theoretic studies in the Markov spectrum and in the structural theory of set addition,} In Groups and the Inverse Problems of Additive Number Theory (in Russian) (Moscow: Kalinin. Gos. Univ., 1973).

\bibitem{gray} R. Gray, Entropy and information theory. Springer-Verlag, New York, 1990. 


\bibitem{green-finite-field-notes} B.~J.~Green, \emph{Finite field models in additive combinatorics,} in Surveys in Combinatorics 2005  London Math. Soc. Lecture Notes \textbf{327}, 1--27.

\bibitem{green-pz} B.~J.~Green, \emph{Course notes for C3.10, Oxford}, 2022. Available on request. 




\bibitem{green-tao-u3-pfr} B.~J.~Green and T.~C.~Tao, \emph{An equivalence between inverse sumset theorems and inverse conjectures for the $U^3$-norm,} Math. Proc. Camb. Philo. Soc. \textbf{149} (2010), no. 1, 1--19.

\bibitem{hochman} M.~Hochman, \emph{On self-similar sets with overlaps and inverse theorems for entropy,} Ann.\ of Math. (2) \textbf{180} (2014), no. 2, 773--822.

\bibitem{kv} V.~A.~Ka\u{\i}manovich and A.~M.~Vershik, \emph{Random walks on discrete groups:
boundary and entropy,} Ann. Probab. \textbf{11} (1983), 457--490.

\bibitem{lovett-u3-pfr} S.~Lovett, \emph{Equivalence of polynomial conjectures in additive combinatorics,} Combinatorica \textbf{32} (2012), no. 5, 607--618.

\bibitem{lovett-regev} S.~Lovett and O.~Regev, \emph{A counterexample to a strong variant of the polynomial Freiman--Ruzsa conjecture,} Discrete Analysis 2017:8, 6 pp.

\bibitem{manners} F.~R.~W.~M.~Manners, \emph{Finding a Low-dimensional Piece of a Set of Integers,} Int. Math. Res. Not. \textbf{15} (2017), 4673--4703.


\bibitem{pz}  D. P\'alv\"olgyi and D. Zhelezov, \emph{Query complexity and the polynomial Freiman--Ruzsa conjecture,} Advances in Mathematics \textbf{392} (2021), p.108043.


\bibitem{ruzsa-entropy} I.~Z.~Ruzsa, \emph{Sumsets and entropy,} Random Struct. Alg., 34 (2009), 1--10.

\bibitem{samorodnitsky} A.~Samorodnitsky, \emph{Low-degree tests at large distances,} In Proceedings of the thirty-ninth annual ACM symposium on Theory of computing, 2007, 506--515.

\bibitem{sanders} T.~Sanders, \emph{On the Bogolyubov-Ruzsa Lemma,} Analysis and PDE \textbf{5} (2012), no. 3, 627--655.

\bibitem{sanders2} T.~Sanders, \emph{The structure theory of set addition revisited,} Bull. Amer. Math. Soc. \textbf{50} (2013), 93--127.

\bibitem{tao-entropy} 
T.~C~.Tao, \emph{Sumset and inverse sumset theory for Shannon entropy}, Combin. Probab. Comput. \textbf{19} (2010), no. 4, 603--639.

\bibitem{tao-vu}
T.~C.~Tao and V.~H.~Vu, Additive combinatorics.  Cambridge Studies in Advanced Mathematics \textbf{105}, Cambridge University Press, 	Cambridge, 2010.

\bibitem{taovu-john}
T.~C.~Tao and V.~H.~Vu, \emph{John-type theorems for generalized arithmetic progressions and iterated sumsets}, Adv. Math. \textbf{219} (2008), no. 2, 428--449.


\bibitem{zhelezov-talk} Web seminar by D.~Zhelezov, available at 
\url{https://www.youtube.com/watch?v=37JXZiMtVvA}


\end{thebibliography}
\end{document}